\documentclass[a4paper,11pt]{amsart}

\usepackage{geometry}
\usepackage{amssymb,mathrsfs,amsmath,amsthm}
\usepackage{enumitem}
\usepackage{autobreak}
\allowdisplaybreaks
\usepackage{bm}
\usepackage{bbm}

\usepackage{graphicx}
\usepackage{color}
\usepackage{booktabs}
\usepackage{hyperref}
\usepackage{cleveref}

\newtheorem{theorem}{Theorem}[section]
\newtheorem{lemma}{Lemma}[section]
\newtheorem{proposition}{Proposition}[section]
\newtheorem{corollary}{Corollary}[section]
\newtheorem{definition}{Definition}[section]
\newtheorem{remark}{Remark}[section]

\theoremstyle{definition}
\newtheorem{example}{Example}[section]

\numberwithin{equation}{section}
\numberwithin{figure}{section}

\makeatletter
\@namedef{subjclassname@2020}{\textup{2020} Mathematics Subject Classification}
\makeatother

\begin{document}

\title{Existence, Equivalence, and Convergence of Unbalanced Optimal Transport}

\author{Qing Zhao}
\address{School of Mathematics and Shing-Tung Yau Center, Southeast University, Nanjing 211189, P. R. China}
\email{zhao-qing@seu.edu.cn}

\author{Feida Jiang$^*$}
\address{School of Mathematics and Shing-Tung Yau Center, Southeast University, Nanjing 211189, P. R. China; Shanghai Institute for Mathematics and Interdisciplinary Sciences, Shanghai 200433, P. R. China}
\email{jiangfeida@seu.edu.cn}

\subjclass[2020]{49Q22, 49J45}

\date{\today}
\thanks{$^*$ Corresponding author: Feida Jiang.}

%\thanks{{\bf {Maybe we can submit this paper to IMRN or other good journals.}}}

\keywords{Unbalanced optimal transport.}

\begin{abstract}
We study an unbalanced optimal transport problem with a spatially independent signed source.
The target density is continuous on a bounded domain
or continuous and integrable on $\mathbb R^n$. We establish sequential weak lower semicontinuity
at the critical Sobolev regularity $W^{1,n}$
and existence of minimizers under prescribed
distortion and Jacobian mass bounds, provided
a finite-energy admissible pair exists.
For domain-preserving, uniformly bi-Lipschitz affine
flows, we prove an energy-preserving static-dynamic
correspondence and stability of the reconstruction.
Within this class, sequential $\Gamma$-convergence
identifies the strong source-penalty limit as transport
minimization among pairs of minimum source cost.
This yields optimal-cost asymptotics and subsequential
convergence of minimizers and their reconstructed paths.
When a constant source is admissible, optimal sources
converge strongly in $L^2$ to the constant determined
by the endpoint mass difference.
We establish local constant-source feasibility
for H\"older endpoint densities near a fixed reference
pair and give a quantitative obstruction showing
that constant-source feasibility may fail even when admissible pairs exist.
\end{abstract}

\maketitle

\section{Introduction}\label{sec1}
Optimal transport provides a variational framework
for comparing and redistributing mass.
In its classical formulation, the source and target
measures have the same total mass.
Unbalanced optimal transport relaxes this requirement
by allowing mass creation and removal.

Let $\Omega\subset\mathbb R^n$ be a bounded domain,
let $\rho_0$ and $\rho_1$ denote the nonnegative
initial and target densities, and let $\alpha>0$.
For a map $M\in W^{1,n}(\Omega;\mathbb R^n)$
and a spatially independent source
$f\in L^2(0,1)$, the static energy is formally given by
\begin{equation}\label{e3}
\begin{aligned}
E_\alpha(M,f)
={}&
\int_\Omega |M(x)-x|^2\rho_0(x)\,dx
+\frac1\alpha\int_0^1|f(t)|^2\,dt\\
&+
\int_0^1\int_0^t f(s)
\int_\Omega |M(x)-x|^2
\det\bigl(s\nabla M(x)+(1-s)I\bigr)
\,dx\,ds\,dt,
\end{aligned}
\end{equation}
where $I$ is the identity matrix.
The first term is the displacement cost of the
initial mass, the second penalizes the source,
and the last accounts for its contribution along
the affine interpolation from the identity to $M$. The map and the source satisfy the endpoint constraint
\begin{equation}\label{eq1.2}
\rho_1(M(x))\det\nabla M(x)
=
\rho_0(x)+\int_0^1 f(t)J_t[M](x)\,dt.
\end{equation}
Its distributional interpretation and the remaining
admissibility conditions are specified below. 

Write $
X_t[M]:=(1-t)\mathrm{Id}+tM$, $J_t[M]:=\det\bigl((1-t)I+t\nabla M\bigr)$
and define the intermediate state
$$
R_t[M,f](x)
:=
\rho_0(x)+\int_0^t f(s)J_s[M](x)\,ds.
$$
For pairs with nonnegative intermediate states, define
$$
G(M,f):=
\int_0^1\int_\Omega
|M(x)-x|^2R_t[M,f](x)\,dx\,dt
\in[0,+\infty]
$$
and
$$
E_\alpha(M,f):=
G(M,f)+\frac1\alpha\|f\|_{L^2(0,1)}^2.
$$
This definition agrees with \eqref{e3}
for bounded-range maps and whenever the terms
in that expression are absolutely integrable.
We study existence for this static problem, its correspondence with dynamic transport under prescribed affine-flow conditions, and the limits
selected by strong source penalization.
We also give sufficient conditions and explicit
examples verifying the required feasibility assumptions. We first review the related literature and then
describe our approach and main results.

\subsection{Background and related work}
The classical transport problem originates with
Monge~\cite{G}.
Kantorovich~\cite{L} introduced its relaxation
to transport plans, while Benamou and
Brenier~\cite{Y1} formulated quadratic optimal
transport through a continuity equation
and a kinetic action.
Existence of optimal maps has been studied
under various assumptions on the cost
and the endpoint measures
\cite{E3,W2,L11,L13,S12}.
For background on classical optimal transport, we refer the reader to \cite{C,C1}.

Several extensions of optimal transport have been
developed for finite nonnegative measures with
unequal total masses. One approach to unbalanced optimal transport extends
the Benamou-Brenier formulation by introducing
a source into the continuity equation and penalizing
mass creation and removal.
Chizat et al.~\cite{L1} introduced the
Wasserstein-Fisher-Rao metric, which combines
the quadratic Wasserstein action for spatial transport
with the Fisher-Rao action for mass variation.
In \cite{CPSV2018}, they studied a broader class
of dynamic transport costs defined by convex,
positively one-homogeneous action densities
and established equivalent static Kantorovich formulations in terms of semi-couplings. Static approaches include Kantorovich-type problems
over transport plans and Monge-type problems involving transport maps and mass scaling factors. Guittet~\cite{K}
studied extended Kantorovich norms with a fixed cost per unit of mass created or removed. Piccoli and Rossi~\cite{B} introduced generalized Wasserstein distances combining $W_p$ transport
with total variation penalties for mass variation, and subsequently established their dynamic characterization and duality properties in \cite{B1}.
Liero, Mielke, and Savar\'e~\cite{LMS2018}
developed the entropy-transport framework as a Kantorovich-type problem with entropy penalties on the marginals.
Savar\'e and Sodini~\cite{SS2024}
obtained formulations over homogeneous couplings
through convex relaxation and investigated
the corresponding Monge formulations.
For the Hellinger-Kantorovich distance,
Liero, Mielke, and Savar\'e~\cite{LMS2023}
established sufficient conditions for the solvability of the Monge problem and constructed transport-growth flows describing geodesics.
Gallou\"et, Ghezzi, and Vialard~\cite{GGV2025} studied regularity in the static entropy-transport framework and its associated Monge formulation, and proved static-dynamic equivalence for the Wasserstein-Fisher-Rao metric on closed Riemannian manifolds.

The present problem is motivated by
Gangbo et al.~\cite{bib1}.
They introduced a spatially independent
additive source with a quadratic penalty
into the dynamic transport problem
and derived an associated Monge formulation
through Lagrangian coordinates.
This leads to the energy \eqref{e3}
and the endpoint relation \eqref{eq1.2}.
A generalization allowing spatially dependent
sources was studied in \cite{W1}. Gangbo et al.~\cite{bib1} approach the problem
through its dynamic formulation and derive
an associated Monge formulation in Lagrangian
coordinates. We take the static formulation as our starting
point and establish existence of minimizers
under prescribed geometric constraints.
For domain-preserving, uniformly bi-Lipschitz
affine flows, we then prove equivalence with
the corresponding restriction of their dynamic
problem, including equality of minimum values
and a correspondence between minimizers.

\subsection{Outline of the approach}
The direct analysis of the static problem requires
compactness of minimizing sequences and stability
of the nonlinear constraints under weak convergence.
The arguments differ according to whether the
maps have a prescribed bounded range.

For bounded targets, the main difficulty is to
preserve the nonlinear endpoint constraint and
establish lower semicontinuity under weak convergence
of both maps and sources.
At the critical regularity $W^{1,n}$, distributional
continuity of Jacobians does not directly imply
convergence of the weighted energy.
We combine composition identities with weak continuity
of minors to obtain convergence of localized weighted
Jacobians.
Uniform approximation of the continuous target density
also allows passage to the limit in the endpoint
constraint.
The polynomial dependence of $J_t[M]$ on time
provides the uniform convergence needed to handle
its coupling with weakly convergent sources.
Nonnegativity of the intermediate states then permits
a cutoff argument yielding global lower semicontinuity, even when the sources change sign.
The distortion and Jacobian mass bounds control the gradients, while the bounded range controls
the maps themselves.
These estimates provide the compactness required
by the direct method.

For whole-space targets, the absence of a prescribed
bounded range creates two additional difficulties. The weights depending on the maps are no longer
uniformly bounded, and gradient bounds alone
do not control translations.
To address the first issue, local H\"older estimates
under the distortion bound yield locally uniform
convergence along weakly convergent sequences.
This preserves the weighted endpoint constraint
and allows the preceding energy argument to be
applied locally.
To obtain a full Sobolev bound, we use positive
initial mass to find a set of uniformly positive
measure on which the intermediate states remain
bounded below for a short time.
The transport energy then controls the displacement
on this set.
Combined with Poincar\'e's inequality and the gradient
bound, this controls the mean of the map and
establishes coercivity.

For the static-dynamic correspondence, we restrict
attention to domain-preserving, uniformly bi-Lipschitz
affine flows.
The change-of-variables formula reconstructs
dynamic densities and velocities from static pairs
and preserves the energy.
Conversely, testing the continuity equation with
functions transported by the inverse flow recovers
the static endpoint constraint.
Uniform control of the inverse flows, together
with weak continuity of minors and the polynomial
structure of the interpolation Jacobians, also
establishes stability of the reconstruction. The source-penalty limit is studied within this
same geometric class.
After subtracting the minimum source cost divided
by $\alpha$, compactness and lower semicontinuity
give the $\Gamma$-liminf inequality, while constant
sequences provide recovery sequences.
Comparison with admissible pairs of minimum source
cost identifies the limiting transport problem
and yields convergence of minimum values and
subsequential convergence of minimizers.
Convergence of the source norms upgrades their weak
convergence to strong convergence in $L^2(0,1)$.
The reconstruction stability then transfers these
limits to the corresponding dynamic paths.
When a constant source is admissible, the limiting
source is the constant determined by the endpoint
mass difference.

Finally, we investigate the existence of admissible pairs with constant source.
The Jacobian bounds give a necessary condition for constant-source admissibility.
For densities close to a fixed reference pair,
a solution operator for the divergence equation
and Schauder's fixed point theorem solve the
nonlinear endpoint constraint.
This construction produces constant-source pairs
satisfying the prescribed distortion and flow bounds,
so the existence, equivalence, and convergence
results apply without a separate nonemptiness
assumption in this local setting.

\subsection{Main results}
Throughout this paper, let $n\ge2$,
let $\Omega\subset\mathbb R^n$ be a bounded
open set with Lipschitz boundary, and let
$0\le\rho_0\in L^1(\Omega)$. We equip $
Y:=
\bigl(
W^{1,n}(\Omega;\mathbb R^n)
\cap L^\infty(\Omega;\mathbb R^n)
\bigr)\times L^2(0,1)$ with the topology $\tau$.
Here $(M_j,f_j)\xrightarrow{\tau}(M,f)$ means that $M_j\rightharpoonup M$ in $W^{1,n}(\Omega;\mathbb R^n)$, $M_j\rightharpoonup^*M$ in $L^\infty(\Omega;\mathbb R^n)$, and $f_j\rightharpoonup f$ in $L^2(0,1)$. We equip $
Y_1:=
W^{1,n}(\Omega;\mathbb R^n)\times L^2(0,1)$ with the product weak topology.
We use the following four sets of assumptions
and specify the corresponding admissible classes.

\begin{enumerate}
\renewcommand{\theenumi}{H\arabic{enumi}}
\renewcommand{\labelenumi}{(\theenumi)}

\item\label{hyp:bounded-data}
\textit{Bounded targets.}
Assume that $\Omega$ is convex and
$0\le\rho_1\in C(\overline\Omega)$.
Set
$$
X(\Omega;\overline\Omega)
:=
\left\{
M\in W^{1,n}(\Omega;\mathbb R^n)
\cap L^\infty(\Omega;\mathbb R^n):
M(x)\in\overline\Omega\ \text{a.e.}
\right\}.
$$
Let $\mathcal V^{\rho_1}$ consist of pairs $
(M,f)\in
X(\Omega;\overline\Omega)\times L^2(0,1)$
satisfying $
J_t[M]\ge0$ and $R_t[M,f]\ge0$ a.e. in $\Omega$ and for every fixed $t\in[0,1]$, and $
\rho_1(M)\det\nabla M=R_1[M,f]$ in $\mathcal D'(\Omega)$.
For $K>0$, define
$$
\mathcal V_K^{\rho_1}
:=
\left\{
(M,f)\in\mathcal V^{\rho_1}:
\begin{array}{l}
|\nabla M|^n\le K\det\nabla M\text{ a.e. in }\Omega,\\[1mm]
\displaystyle
\int_\Omega\det\nabla M\,dx\le|\Omega|
\end{array}
\right\},
$$
\item\label{hyp:whole-space-data}
\textit{Whole-space targets.}
Assume that $\Omega$ is connected,
$0\le\rho_1\in C(\mathbb R^n)\cap L^1(\mathbb R^n)$,
and $\int_\Omega\rho_0\,dx>0$.
Let $\widetilde{\mathcal V}^{\rho_1}$
satisfy the same source, nonnegativity,
and endpoint constraints as $\mathcal V^{\rho_1}$,
with the map requirement replaced by
$M\in W^{1,n}(\Omega;\mathbb R^n)$.
For $K,\Lambda>0$, define
$$
\mathcal V_{K,\Lambda}^{\rho_1}
:=
\left\{
(M,f)\in\widetilde{\mathcal V}^{\rho_1}:
\begin{array}{l}
|\nabla M|^n\le K\det\nabla M
\text{ a.e. in }\Omega,\\[1mm]
\displaystyle
\int_\Omega\det\nabla M\,dx\le\Lambda
\end{array}
\right\}.
$$
The distributional endpoint constraint includes
the requirement $
\rho_1(M)\det\nabla M\in L^1_{\mathrm{loc}}(\Omega)$.
\item\label{dy-am}\textit{Static-dynamic setting.}
Assume that $\Omega$ is convex and
$0\le\rho_1\in C(\overline\Omega)$. For $L\ge1$, let $\mathcal A_L^{\rho_1}$
consist of pairs $(M,f)\in\mathcal V^{\rho_1}$
with $M\in C(\overline\Omega;\overline\Omega)$
satisfying
\begin{equation}
\label{ex1.3}
\begin{gathered}
X_t[M](\overline\Omega)=\overline\Omega,\\
L^{-1}|x-y|
\le |X_t[M](x)-X_t[M](y)|
\le L|x-y|
\end{gathered}
\end{equation}
for every $x,y\in\overline\Omega$
and $t\in[0,1]$.

\item\label{hyp:local-holder}
\textit{Local H\"older setting.}
Fix $K>n^{n/2}$, $L>1$, and $0<\beta<\gamma<1$ and assume that $\Omega$ is a bounded convex domain with $C^{3,\beta}$ boundary. Let $\overline\rho\in C^{1,\gamma}(\overline\Omega)$
and $a\in\mathbb R$ satisfy $
\overline\rho\ge0$, $\overline\rho-a\ge0$, and $
\min_{\overline\Omega}
\left(\overline\rho-\frac a2\right)>0$.
The prescribed densities satisfy $
0\le\rho_0,\rho_1\in C^{0,\beta}(\overline\Omega)$ and $\int_\Omega(\rho_1-\rho_0)\,dx=a|\Omega|$. Write $d:=
\|\rho_0-\overline\rho+a\|_{C^{0,\beta}(\overline\Omega)}
+\|\rho_1-\overline\rho\|_{C^{0,\beta}(\overline\Omega)}$.
\end{enumerate}

We first establish lower semicontinuity and
existence in each of the settings
\textup{(\ref{hyp:bounded-data})} and
\textup{(\ref{hyp:whole-space-data})}.

\begin{theorem}[Lower semicontinuity]
\label{TH5.1}
Under \textup{(\ref{hyp:bounded-data})},
$E_\alpha$ is sequentially lower semicontinuous
on $\mathcal V^{\rho_1}$ with respect to $\tau$
for every $\alpha>0$.
\end{theorem}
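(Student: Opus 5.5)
The source penalty $\frac1\alpha\|f\|_{L^2}^2$ is weakly lower semicontinuous, so it suffices to treat the transport term $G$. Take $(M_j,f_j)\xrightarrow{\tau}(M,f)$ in $\mathcal V^{\rho_1}$. Since $\Omega$ is bounded and Lipschitz, Rellich gives $M_j\to M$ in $L^p$ for every $p<\infty$ and, after passing to a subsequence realizing the liminf, almost everywhere. The ranges lie in $\overline\Omega$, so $|M_j(x)-x|^2\le(\operatorname{diam}\Omega)^2$ uniformly and $|M_j-x|^2\to|M-x|^2$ strongly in every $L^p$. Expanding the intermediate state,
$$
G(M_j,f_j)=\int_0^1\int_\Omega|M_j-x|^2\rho_0\,dx\,dt+\int_0^1\int_0^t f_j(s)\int_\Omega|M_j-x|^2J_s[M_j]\,dx\,ds\,dt .
$$
Here $J_s[M_j]$ is a polynomial in $s$ of degree $n$, and its coefficients are sums of minors of $\nabla M_j$ of orders $0,\dots,n$. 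These coefficients are bounded in $L^1$ and lie in $L^{n/k}$ for order $k$. The first term converges by dominated convergence, because $\rho_0\in L^1$ and the weights are bounded.

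\textbf{Main step: the source term.} For each minor $m_k(\nabla M_j)$ of order $k\le n$, I would show that
$$
\int_\Omega|M_j-x|^2\,m_k(\nabla M_j)\,dx\to\int_\Omega|M-x|^2\,m_k(\nabla M)\,dx .
$$
For $k<n$, $m_k(\nabla M_j)$ is bounded in $L^{n/k}$ with $n/k>1$, so weak continuity of minors gives $m_k(\nabla M_j)\rightharpoonup m_k(\nabla M)$ weakly in $L^{n/k}$. Pairing this with strong convergence of the bounded weight in $L^{(n/k)'}$ gives the limit. The critical case $k=n$ is the expected obstacle: $\det\nabla M_j$ is only bounded in $L^1$ and converges in $\mathcal D'$, while the weight is merely bounded and converges a.e.

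To handle $k=n$, I would use the composition structure. The weight $|M_j-x|^2$ is a smooth function of $(x,M_j)$, so $\phi(x)|M_j(x)-x|^2\det\nabla M_j$ can be written, via the null-Lagrangian/divergence identity, as $\operatorname{div}$ of a cofactor expression. Concretely, one writes $\det\nabla M_j\,\psi(M_j)$ for polynomial $\psi$ as $\det\nabla(\Psi\circ M_j)$-type terms, or approximates the $x$-dependence by products $a(x)b(M_j)$ via Stone--Weierstrass on the compact set $\overline\Omega\times\overline\Omega$. Then $b(M_j)\det\nabla M_j=\det\nabla(B\circ M_j)$, with $B$ a smooth map whose Jacobian determinant is $b$, converges in $\mathcal D'$ by weak continuity of $W^{1,n}$ Jacobians, since $B\circ M_j\rightharpoonup B\circ M$ in $W^{1,n}$. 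The approximation error is controlled by $\varepsilon\|\det\nabla M_j\|_{L^1}$, which is uniformly bounded by H\"older. This gives localized convergence $\int\phi\,w_j\det\nabla M_j\to\int\phi\,w\det\nabla M$ for $\phi\in C_c^\infty$.

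Globalization is where nonnegativity enters. Write $H_j(x):=\int_0^1\int_0^t f_j(s)J_s[M_j](x)\,ds\,dt$. Expanding $J_s$ in $s$, $H_j$ is a combination of minors of $\nabla M_j$ with coefficients $c_{k,j}=\int_0^1\int_0^t f_j(s)s^k\,ds\,dt$. Since the functions $s\mapsto\int_s^1 s^k\,dt$ are fixed $L^2$ functions, weak convergence $f_j\rightharpoonup f$ gives $c_{k,j}\to c_k$; this is exactly where the polynomial time dependence is used. Hence $\int\phi|M_j-x|^2H_j\to\int\phi|M-x|^2H$ for every cutoff $\phi$. Now $|M_j-x|^2\int_0^1R_t[M_j,f_j]\,dt\ge0$, so for $0\le\phi\le1$ in $C_c^\infty(\Omega)$,
$$
\liminf_j G(M_j,f_j)\ge\lim_j\int_\Omega\phi\,|M_j-x|^2\Bigl(\rho_0+H_j\Bigr)dx=\int_\Omega\phi\,|M-x|^2\int_0^1R_t[M,f]\,dt\,dx .
$$
Letting $\phi\uparrow1$ and applying monotone convergence to the nonnegative integrand, which is nonnegative because $(M,f)\in\mathcal V^{\rho_1}$, yields $\liminf G(M_j,f_j)\ge G(M,f)$. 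Adding the source penalty completes the proof.
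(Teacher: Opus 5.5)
Your proof is correct. It shares the paper's two essential ingredients: the composition identity $b(M)\det\nabla M=\det\nabla(B\circ M)$ (Lemma~\ref{L2.1}), which gives localized convergence at the critical order, and the interior cutoff combined with $R_t\ge0$ and monotone convergence to globalize.

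The organization differs. In this proof the paper never splits $J_s$ into minors. For each fixed $s$ it treats $\det(I+s\nabla u_k)$ as the Jacobian of the single map $x+su_k$ and applies Lemma~\ref{L2.1} with $h=1,y_i,|y|^2$ (Lemma~\ref{L3.2}). This gives pointwise-in-$s$ convergence of the localized integrals $\Phi_{k,m}(s)$. The paper then upgrades this to uniform convergence on $[0,1]$ by Arzel\`a--Ascoli, using the Lipschitz dependence on $s$, and only afterwards pairs with $f_k\rightharpoonup f$.

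You instead expand by order:
\begin{itemize}
\item the subcritical minors converge weakly in $L^{n/k}(\Omega)$ on the whole domain against strongly convergent bounded weights;
\item only $\det\nabla M_j$ needs the composition trick;
\item integrating in time first reduces the coupling with the source to the $n+1$ scalar moments $\int_0^1 f_j(s)s^k(1-s)\,ds$.
\end{itemize}
Weak convergence of $f_j$ is therefore used directly, and no equicontinuity or uniform bounds on the localized integrals are needed. Your route is more elementary in the time variable. The paper's is more economical in space: one application of a single lemma per $s$, with no separate treatment of subcritical and critical orders.

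Two minor remarks:
\begin{itemize}
\item Stone--Weierstrass is unnecessary, since $|y-x|^2=|y|^2-2x\cdot y+|x|^2$ is already a finite sum of products; this is exactly the paper's expansion.
\item You take $(M,f)\in\mathcal V^{\rho_1}$ as given. That is legitimate for lower semicontinuity on that set, and it is needed for $E_\alpha(M,f)$ to be defined. The paper instead derives it from the closedness Lemma~\ref{L3.1}, which the direct method in Theorem~\ref{GTH5.2} needs anyway.
\end{itemize}
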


\begin{theorem}[Existence for bounded targets]
\label{GTH5.2}
Under \textup{(\ref{hyp:bounded-data})},
let $K>0$ and assume that
$\mathcal V_K^{\rho_1}\ne\varnothing$.
Then $E_\alpha$ attains its minimum
on $\mathcal V_K^{\rho_1}$
for every $\alpha>0$.
\end{theorem}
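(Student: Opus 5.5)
We use the direct method, with Theorem~\ref{TH5.1} supplying lower semicontinuity. Set $m:=\inf_{\mathcal V_K^{\rho_1}}E_\alpha$. Since $\mathcal V_K^{\rho_1}\ne\varnothing$, we have $m<+\infty$. We first check that $m$ is finite. For any $(M,f)\in\mathcal V_K^{\rho_1}$, the intermediate states $R_t[M,f]$ are nonnegative, so $G\ge0$ and therefore $E_\alpha\ge0$. We then take a minimizing sequence $(M_j,f_j)\subset\mathcal V_K^{\rho_1}$ with $E_\alpha(M_j,f_j)\to m$ and may assume $\sup_j E_\alpha(M_j,f_j)<\infty$.

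\emph{Compactness.} The source bound follows from $\frac1\alpha\|f_j\|_{L^2}^2\le E_\alpha(M_j,f_j)$, so $(f_j)$ is bounded in $L^2(0,1)$. Because $M_j(x)\in\overline\Omega$ a.e. and $\Omega$ is bounded, $\|M_j\|_{L^\infty}\le \sup_{\overline\Omega}|x|$. Hence $(M_j)$ is bounded in $L^\infty$ and, on the bounded set $\Omega$, also in $L^n$. For the gradients, the distortion inequality and the Jacobian mass bound give
\[
\int_\Omega|\nabla M_j|^n\,dx\le K\int_\Omega\det\nabla M_j\,dx\le K|\Omega|.
\]
Thus $(M_j)$ is bounded in $W^{1,n}\cap L^\infty$. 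By reflexivity of $W^{1,n}$ and $L^2$ and Banach--Alaoglu for $L^\infty=(L^1)^*$, a subsequence satisfies $(M_j,f_j)\xrightarrow{\tau}(M,f)$. The weak and weak-$*$ limits coincide a.e., since both are limits in $\mathcal D'$.

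\emph{Closedness of $\mathcal V_K^{\rho_1}$.} This is the main step. We must show $(M,f)\in\mathcal V_K^{\rho_1}$.

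The range condition $M\in\overline\Omega$ a.e. holds because $X(\Omega;\overline\Omega)$ is convex and strongly closed, hence weakly closed. Alternatively, Rellich gives $M_j\to M$ in $L^n$ and a.e. along a subsequence.

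For the determinant, weak continuity of minors at the critical exponent (Reshetnyak/Ball: $\det\nabla M_j\to\det\nabla M$ in $\mathcal D'$ when $M_j\rightharpoonup M$ in $W^{1,n}$) lets us pass to the limit in the remaining constraints.
\begin{itemize}
\item Since $J_t[M_j]$ is a polynomial in $t$ whose coefficients are minors of $\nabla M_j$, and these minors converge distributionally, we get $J_t[M_j]\to J_t[M]$ in $\mathcal D'$. The nonnegativity of limits of nonnegative distributions, together with $J_t[M]\in L^1$, then gives $J_t[M]\ge0$ a.e.
\item The same argument applies to $R_t[M_j,f_j]=\rho_0+\int_0^t f_jJ_s[M_j]\,ds$. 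Tested against $\varphi\in C_c^\infty$, the map $s\mapsto\int J_s[M_j]\varphi$ is a polynomial with convergent coefficients, hence converges uniformly on $[0,1]$. Combined with $f_j\rightharpoonup f$ in $L^2$, this gives $R_t[M_j,f_j]\to R_t[M,f]$ in $\mathcal D'$, so $R_t[M,f]\ge0$.
\item For the endpoint constraint, approximate $\rho_1$ uniformly on $\overline\Omega$ by smooth $g$. The composition identity $g(M_j)\det\nabla M_j=\det\nabla(\cdot)$-type divergence structure, together with $\|\rho_1-g\|_\infty\int|\det\nabla M_j|\le\|\rho_1-g\|_\infty|\Omega|$, gives $\rho_1(M_j)\det\nabla M_j\to\rho_1(M)\det\nabla M$ in $\mathcal D'$. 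This is the step treated in the paper's lower-semicontinuity analysis, which I would invoke rather than redo.
\item The distortion inequality passes to the limit by polyconvexity. The functional $\int\phi(|\nabla M|^n-K\det\nabla M)^+$ is not directly convex, so instead I would use the equivalent statement that for every nonnegative $\phi\in C_c$, $\int\phi|\nabla M|^n\le\liminf\int\phi|\nabla M_j|^n\le K\lim\int\phi\det\nabla M_j=K\int\phi\det\nabla M$. The first inequality is weak lower semicontinuity of the convex norm, and the last equality uses Reshetnyak's theorem again, since $\det\nabla M_j\ge0$ with bounded mass converges weakly as measures. This yields the pointwise inequality a.e.
\item Finally, $\int\det\nabla M\le\liminf\int\det\nabla M_j\le|\Omega|$. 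This holds by weak-$*$ convergence of the nonnegative measures $\det\nabla M_j$ together with lower semicontinuity of mass on open sets.
\end{itemize}

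\emph{Conclusion.} Since $\mathcal V_K^{\rho_1}\subset\mathcal V^{\rho_1}$, Theorem~\ref{TH5.1} gives $E_\alpha(M,f)\le\liminf E_\alpha(M_j,f_j)=m$. Hence $(M,f)$ is a minimizer.

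I expect the main obstacle to be the closedness of the distortion inequality and of the endpoint constraint at the critical exponent. There, determinants converge only as distributions or measures, and equi-integrability of $|\nabla M_j|^n$ is not available. The localized testing argument above is the route I would attempt first.
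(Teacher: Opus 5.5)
Your proposal is correct and follows the paper's proof. Both use the direct method with the bounds $\|f_j\|_{L^2}^2\le\alpha E_\alpha(M_j,f_j)$ and $\|\nabla M_j\|_{L^n}^n\le K|\Omega|$ and the bounded range, then sequential closedness of $\mathcal V_K^{\rho_1}$ (Lemma~\ref{L3.1}, whose weighted-Jacobian step is the composition identity $g(M)\det\nabla M=\det\nabla(\Psi_g\circ M)$ of Lemma~\ref{L2.1} that you allude to), and finally lower semicontinuity from Theorem~\ref{TH5.1}. The differences are minor: you get the range constraint from Rellich and a.e.\ convergence, which does not even need convexity, instead of the paper's half-space argument; and your claim $m<\infty$ needs the one-line justification that bounded range, $R_t\ge0$, and $\sup_{s}\|J_s[M]\|_{L^1}<\infty$ give every admissible pair finite energy.
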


\begin{theorem}[Whole-space targets]
\label{T5.3}
Under \textup{(\ref{hyp:whole-space-data})},
let $K,\Lambda>0$.
For every $\alpha>0$, $E_\alpha$
is sequentially weakly lower semicontinuous
on $\mathcal V_{K,\Lambda}^{\rho_1}$.
If this class contains a pair $(M_*,f_*)$
with $G(M_*,f_*)<\infty$,
then $E_\alpha$ attains its minimum
on $\mathcal V_{K,\Lambda}^{\rho_1}$.
\end{theorem}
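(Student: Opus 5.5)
The proof has two parts: sequential weak lower semicontinuity on $\mathcal V_{K,\Lambda}^{\rho_1}$, proved by localizing the argument behind Theorem~\ref{TH5.1}, and existence, proved by the direct method once coercivity is established.

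\emph{Lower semicontinuity.} Let $(M_j,f_j)\rightharpoonup(M,f)$ weakly in $Y_1$ with $(M_j,f_j)\in\mathcal V_{K,\Lambda}^{\rho_1}$, and assume $\liminf E_\alpha(M_j,f_j)<\infty$; pass to a subsequence realizing the liminf.

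1. The distortion inequality $|\nabla M_j|^n\le K\det\nabla M_j$ together with $\int_\Omega\det\nabla M_j\le\Lambda$ makes $M_j$ maps of finite distortion with uniformly bounded Dirichlet $n$-energy. The standard local H\"older estimate for such maps (oscillation on $B_r$ controlled by $(\log(R/r))^{-1/n}$, or a H\"older modulus in the quasiregular case) then gives equicontinuity on compact subsets $\Omega'\Subset\Omega$. Combined with weak convergence in $W^{1,n}$, Arzel\`a--Ascoli yields $M_j\to M$ locally uniformly.

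2. On each $\Omega'$, the sequence $M_j$ is uniformly bounded. Since $\rho_1$ is continuous, $\rho_1(M_j)\to\rho_1(M)$ uniformly on $\Omega'$, and $\rho_1$ is uniformly continuous on the compact set containing these ranges.

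3. Weak continuity of Jacobians at $W^{1,n}$ with nonnegative determinants gives $\det\nabla M_j\to\det\nabla M$ weakly in measures (in $\mathcal D'$). Hence $\rho_1(M_j)\det\nabla M_j\to\rho_1(M)\det\nabla M$ in $\mathcal D'(\Omega')$.

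4. $J_t[M_j]$ is a polynomial in $t$ whose coefficients are minors of $\nabla M_j$, and these converge distributionally. Since $f_j\rightharpoonup f$ in $L^2(0,1)$, the same bookkeeping as in Theorem~\ref{TH5.1} shows $R_t[M_j,f_j]\to R_t[M,f]$ in $\mathcal D'$, uniformly in $t$. This preserves the endpoint constraint and the pointwise nonnegativity of $J_t$ and $R_t$. The distortion inequality passes to the limit by convexity/polyconvexity, as in the bounded case, and $\int\det\nabla M\le\Lambda$ follows by lower semicontinuity.

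5. For the energy on $\Omega'$, $|M_j-x|^2$ converges uniformly. The nonnegative measures $R_t[M_j,f_j]\,dx$ converge weakly, so testing against cutoffs $\phi\in C_c(\Omega')$ with $0\le\phi\le1$ gives
\[
\int_0^1\!\int\phi|M-x|^2R_t[M,f]\le\liminf_j G(M_j,f_j).
\]
Taking the supremum over $\phi$ and $\Omega'$ gives lower semicontinuity of $G$. The $L^2$ term is weakly lower semicontinuous.

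\emph{Existence.} Take a minimizing sequence; its energy is finite because $G(M_*,f_*)<\infty$. The bound $E_\alpha\le C$ controls $\|f_j\|_{L^2}$. The distortion and Jacobian bounds give $\|\nabla M_j\|_{L^n}^n\le K\Lambda$. The remaining task, and the main obstacle, is to control the mean $\overline{M_j}$.

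Since $\int\rho_0>0$, choose $\delta>0$ such that $A:=\{\rho_0\ge\delta\}$ has positive measure. We need $R_t[M_j,f_j]\ge\delta/2$ on a set of measure $\ge|A|/2$ for $t\in[0,t_0]$, uniformly in $j$. Write
\[
R_t-\rho_0=\int_0^tf_jJ_s\,ds.
\]
By Cauchy--Schwarz in $s$,
\[
\int_\Omega|R_t-\rho_0|\le\|f_j\|_{L^2}\,t^{1/2}\sup_s\|J_s[M_j]\|_{L^1}.
\]
Expand $J_s$ into minors and bound $\|J_s\|_{L^1}\lesssim 1+\|\nabla M_j\|_{L^n}^n$, which is uniformly bounded. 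For small $t_0$, Chebyshev's inequality then gives a set $A_j\subset A$ with $|A_j|\ge|A|/2$ on which $R_t\ge\delta/2$ for most $t\le t_0$; a Fubini argument in $(x,t)$ suffices. Therefore
\[
\int_{A_j}|M_j-x|^2\le C/(\delta t_0).
\]
Poincar\'e's inequality on the connected Lipschitz domain $\Omega$ gives $\|M_j-\overline{M_j}\|_{L^n}\le C$. Combining this with the $L^2$ bound on a set of uniformly positive measure bounds $|\overline{M_j}|$, so $M_j$ is bounded in $W^{1,n}$.

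Extract weakly convergent subsequences of $M_j$ and $f_j$. By the closure statements in steps 1--4, the limit lies in $\mathcal V_{K,\Lambda}^{\rho_1}$, and by the lower semicontinuity above it is a minimizer.
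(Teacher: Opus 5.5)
Your proposal is correct. It has the same architecture as the paper's proof: local H\"older compactness and closure of the class, a cutoff plus monotone-convergence argument for $G$, a set of uniformly positive measure on which $R_t$ stays bounded below for short times, and then Poincar\'e. Two technical steps are carried out differently.

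\emph{Localized energy.} The paper applies Lemma~\ref{L3.2} on $U_m$ to the weighted Jacobians $|u_k|^2\det(I+s\nabla u_k)$. It then uses the polynomial structure in $s$ to make this convergence uniform before pairing with $f_k$. You exploit the local uniform convergence already supplied by Lemma~\ref{L5.2}. With it, $\phi|M_j-x|^2$ converges uniformly, while $R_t[M_j,f_j]\,dx$ converges weakly-$*$ as nonnegative measures. That weak-$*$ convergence holds because $\sup_{j,t}\|R_t[M_j,f_j]\|_{L^1}<\infty$. Dominated convergence in $t$ then finishes the step. Given local uniform convergence, your route is more elementary. The paper's route reuses the bounded-target machinery, which needs only weak-$*$ $L^\infty$ control of the maps.

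\emph{Coercivity.} The paper applies Chebyshev to $|\nabla M|^n$. On $D_M=D\cap\{|\nabla M|\le Q\}$ this gives the pointwise bound $J_s\le(1+Q)^n$, hence $R_t\ge c/2$ for $t\le\delta$, and leads to the explicit a priori estimate \eqref{e5.2}. You instead apply Chebyshev to the $L^1$-small increment $R_t-\rho_0$, which needs only $\sup_s\|J_s[M_j]\|_{L^1}<\infty$. Your ``most $t$'' Fubini step does work. It is cleaner to use the monotone majorant
\[
W(x):=\int_0^{t_0}|f_j(s)|\,J_s[M_j](x)\,ds .
\]
Then $\int_\Omega W\le t_0^{1/2}\|f_j\|_{L^2}\sup_s\|J_s[M_j]\|_{L^1}$. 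So $A_j:=A\cap\{W\le\delta/2\}$ satisfies $|A_j|\ge|A|/2$ for $t_0$ small, uniformly in $j$. On $A_j$ one has $R_t\ge\delta/2$ for every $t\le t_0$, so the good set no longer depends on $t$.
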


We next state the dynamic correspondence
in the bounded-target setting.
Let $\mathfrak D_L^{\rho_0,\rho_1}$ denote
the dynamic class defined precisely
in Section~\ref{sec5.1}. Its triples satisfy $
0\le\rho\in C([0,1];L^1(\Omega))$, $
f\in L^2(0,1)$, the continuity equation $
\partial_t\rho+\operatorname{div}(\rho v)=f(t)$, the prescribed endpoint conditions, and the no-flux boundary condition. The velocity has a jointly Lipschitz representative
whose flow satisfies $
X_t=(1-t)\mathrm{Id}+tX_1$ and \eqref{ex1.3}
with $M=X_1$.
The dynamic energy is
$$
\mathscr E_\alpha(\rho,v,f)
:=
\int_0^1\int_\Omega
\rho(t,y)|v(t,y)|^2\,dy\,dt
+\frac1\alpha\|f\|_{L^2(0,1)}^2.
$$

\begin{theorem}[Static-dynamic equivalence]
\label{TH1.4}
Under \textup{(\ref{dy-am})},
let $L\ge1$ and assume that
$\mathcal A_L^{\rho_1}\ne\varnothing$.
Every $(M,f)\in\mathcal A_L^{\rho_1}$
determines a triple
$(\rho,v,f)\in\mathfrak D_L^{\rho_0,\rho_1}$
through
\begin{equation}\label{eq:dynamic-reconstruction}
\rho(t,X_t[M](x))
=
\frac{R_t[M,f](x)}{J_t[M](x)},
\qquad
v(t,X_t[M](x))=M(x)-x.
\end{equation}
Conversely, every triple in
$\mathfrak D_L^{\rho_0,\rho_1}$
arises from these formulas with $M=X_1$,
where $X_t$ is its flow.
The correspondence preserves the energy: $
\mathscr E_\alpha(\rho,v,f)=E_\alpha(M,f)$. For every $\alpha>0$, both minima are attained and
\begin{equation}\label{eq:static-dynamic-equality}
\min_{(M,f)\in\mathcal A_L^{\rho_1}}E_\alpha(M,f)
=
\min_{(\rho,v,f)\in\mathfrak D_L^{\rho_0,\rho_1}}
\mathscr E_\alpha(\rho,v,f).
\end{equation}
Static and dynamic minimizers correspond
under this construction.
\end{theorem}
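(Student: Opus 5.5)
The plan has three parts: a forward reconstruction, a converse via a weak formulation with transported test functions, and existence of a static minimizer on $\mathcal A_L^{\rho_1}$, from which the dynamic statement follows by the bijection.

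\emph{Forward direction.} Fix $(M,f)\in\mathcal A_L^{\rho_1}$. By \eqref{ex1.3}, each $X_t:=X_t[M]$ is a bi-Lipschitz homeomorphism of $\overline\Omega$ onto itself. Hence $J_t[M]$ is a.e. bounded between $L^{-n}$ and $L^n$ in absolute value. Since $J_0=1$, $J_t$ is polynomial in $t$, and it never vanishes, we get $J_t\ge L^{-n}>0$. We then define $\rho,v$ by \eqref{eq:dynamic-reconstruction}. The map $(t,x)\mapsto X_t(x)$ is jointly Lipschitz with Lipschitz inverse in $y$, so $v(t,y)=M(X_t^{-1}(y))-X_t^{-1}(y)$ has a jointly Lipschitz representative whose flow is exactly $X_t$.

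To check the continuity equation, take $\varphi\in C^1_c([0,1)\times\overline\Omega)$ (no flux is built in by testing with functions not vanishing on $\partial\Omega$) and use the change of variables $y=X_t(x)$:
$$\int_\Omega\rho(t,y)\varphi(t,y)\,dy=\int_\Omega R_t[M,f](x)\,\varphi(t,X_t(x))\,dx .$$
Differentiating in $t$ gives $\partial_tR_t=f(t)J_t$ and $\frac{d}{dt}\varphi(t,X_t)=(\partial_t\varphi+v\cdot\nabla\varphi)(t,X_t)$. Changing variables back turns $\int f(t)J_t\varphi(t,X_t)\,dx$ into $f(t)\int_\Omega\varphi\,dy$. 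This is the weak form of $\partial_t\rho+\operatorname{div}(\rho v)=f$ with no flux. The initial condition is $\rho(0)=\rho_0$. The final condition $\rho(1,M(x))\det\nabla M=R_1=\rho_1(M)\det\nabla M$ gives $\rho(1)=\rho_1$, using $\det\nabla M>0$. Continuity $\rho\in C([0,1];L^1)$ follows from dominated convergence in Lagrangian form. The same change of variables gives the energy identity:
$$\int_\Omega\rho|v|^2\,dy=\int_\Omega R_t|M-x|^2\,dx,$$
so integrating in $t$ yields $\mathscr E_\alpha=G+\alpha^{-1}\|f\|^2=E_\alpha$.

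\emph{Converse.} Given a triple in $\mathfrak D_L^{\rho_0,\rho_1}$ with flow $X_t=(1-t)\mathrm{Id}+tX_1$, set $M=X_1$. Then $M$ satisfies \eqref{ex1.3} by definition of the class, and $v(t,X_t(x))=\dot X_t(x)=M(x)-x$. For $\psi\in C^1_c(\Omega)$ we test the equation with the transported function $\varphi(t,y)=\psi(X_t^{-1}(y))$, which is Lipschitz and solves $\partial_t\varphi+v\cdot\nabla\varphi=0$. This shows
$$\frac{d}{dt}\int_\Omega\rho(t,X_t)J_t\,\psi\,dx=f(t)\int_\Omega J_t\psi\,dx .$$
Integrating in $t$ gives $\rho(t,X_t)J_t=R_t[M,f]$ a.e., which recovers \eqref{eq:dynamic-reconstruction}. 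It also gives $R_t\ge0$ and, at $t=1$, the distributional endpoint constraint. Hence $(M,f)\in\mathcal A_L^{\rho_1}$.

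This step is the main technical obstacle. The test function is only Lipschitz, and $\rho$ is only $C([0,1];L^1)$. I would first mollify $\psi\circ X_t^{-1}$ in space, or equivalently justify the weak formulation for Lipschitz test functions by density, using the uniform bi-Lipschitz bounds to control the commutator.

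\emph{Minima.} Because the correspondence is a bijection preserving energy, it suffices to show the static minimum on $\mathcal A_L^{\rho_1}$ is attained. Take a minimizing sequence. The uniform Lipschitz bound with constant $L$ and the range in $\overline\Omega$ give Arzel\`a--Ascoli compactness, so $M_j\to M$ uniformly and weakly-* in $W^{1,\infty}$. The bound $E_\alpha\le C$ bounds $f_j$ in $L^2$, so a subsequence converges weakly.

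The conditions in \eqref{ex1.3} pass to uniform limits: the two-sided Lipschitz inequalities are closed, and surjectivity follows from compactness of images. Nonnegativity $J_t\ge0$ and $R_t\ge0$ pass to the limit by weak continuity of minors together with the polynomial dependence in $t$. So the limit lies in $\mathcal V^{\rho_1}$, and hence in $\mathcal A_L^{\rho_1}$. Theorem~\ref{TH5.1} then gives lower semicontinuity, so the limit is a minimizer.

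Equality \eqref{eq:static-dynamic-equality} and the correspondence of minimizers follow directly from the energy-preserving bijection.
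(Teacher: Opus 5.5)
Your proposal is correct and follows essentially the same route as the paper. The forward reconstruction and the energy identity come from change of variables along the affine flow. The converse uses transported test functions $\eta(t)\zeta(X_t^{-1}(y))$, which are admissible by Lipschitz density; no commutator estimate is needed, only uniform convergence of smooth approximations with bounded derivatives converging a.e. Existence on $\mathcal A_L^{\rho_1}$ comes from Arzel\`a--Ascoli, the closedness in Lemma~\ref{L6.1}/Lemma~\ref{L3.1} (which also covers passing the endpoint constraint to the limit), and the lower semicontinuity of Theorem~\ref{TH5.1}.

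One step needs tightening: $\rho\in C([0,1];L^1(\Omega))$ does not follow from dominated convergence alone, because $\rho_0\circ X_t^{-1}$ need not converge pointwise as $t\to s$. As in the paper, approximate $\rho(s,\cdot)$ by some $h\in C(\overline\Omega)$ and use that $\|\rho(t,\cdot)-h\|_{L^1}=\|R_t-h(X_t)J_t\|_{L^1}$ is continuous in $t$.
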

We next describe the limit under strong source
penalization within the affine-flow class.
For $\mathcal A_L^{\rho_1}\ne\varnothing$, set
\[
b_L:=
\inf_{(M,f)\in\mathcal A_L^{\rho_1}}
\|f\|_{L^2(0,1)}^2,
\qquad
\mathcal S_L^{\rho_1}
:=
\left\{
(M,f)\in\mathcal A_L^{\rho_1}:
\|f\|_{L^2(0,1)}^2=b_L
\right\}.
\]
Define the extended functionals on $Y$ by
$$
\widehat{\mathcal F}_\alpha(M,f):=
\begin{cases}
E_\alpha(M,f)-b_L/\alpha,
& (M,f)\in\mathcal A_L^{\rho_1},\\
+\infty,&\text{otherwise},
\end{cases}
$$
and
$$
\widehat{\mathcal F}_0(M,f):=
\begin{cases}
G(M,f),&(M,f)\in\mathcal S_L^{\rho_1},\\
+\infty,&\text{otherwise}.
\end{cases}
$$

\begin{theorem}[Minimum-source-cost limit]\label{thm:minimum-source-limit}
Under \textup{(\ref{dy-am})},
let $L\ge1$ and assume that
$\mathcal A_L^{\rho_1}\ne\varnothing$.
The infimum defining $b_L$ is attained, and $G$ attains its minimum on $\mathcal S_L^{\rho_1}$.
Write $g_L:=\min_{(M,f)\in\mathcal S_L^{\rho_1}}G(M,f)$. Then
$\widehat{\mathcal F}_\alpha
\xrightarrow{\Gamma}\widehat{\mathcal F}_0$ as $\alpha\to0^+$ in the sequential sense with respect to $\tau$, and
\begin{equation}\label{eq:minimum-source-expansion}
\min_{\mathcal A_L^{\rho_1}}E_\alpha
=\frac{b_L}{\alpha}+g_L+o(1).
\end{equation}
For every sequence $\alpha_j\to0^+$ and corresponding
minimizers $(M_{\alpha_j},f_{\alpha_j})$ of $E_{\alpha_j}$,
there exist a subsequence, not relabeled, and a pair
$(M,f)\in\mathcal S_L^{\rho_1}$ with $G(M,f)=g_L$ such that
$$
M_{\alpha_j}\rightharpoonup M
\quad\text{in }W^{1,n}(\Omega;\mathbb R^n),
\qquad
M_{\alpha_j}\to M
\quad\text{uniformly on }\overline\Omega,
$$
and
$$
f_{\alpha_j}\to f
\quad\text{strongly in }L^2(0,1).
$$
Moreover, for any choice of minimizers,
\begin{equation}\label{eq:minimum-source-energy-limits}
G(M_\alpha,f_\alpha)\to g_L,
\qquad
\frac{\|f_\alpha\|_{L^2(0,1)}^2-b_L}{\alpha}\to0.
\end{equation}
\end{theorem}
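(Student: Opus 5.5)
The argument rests on one compactness fact for the affine-flow class, obtained from the results already stated. If $(M_j,f_j)\in\mathcal A_L^{\rho_1}$ and $\sup_j\|f_j\|_{L^2(0,1)}<\infty$, then a subsequence converges in $\tau$, and also uniformly on $\overline\Omega$, to a pair in $\mathcal A_L^{\rho_1}$. Moreover, $G$ is sequentially lower semicontinuous along such sequences.

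We prove this fact in three steps.
\begin{enumerate}
\item The bi-Lipschitz condition \eqref{ex1.3} at $t=1$ gives $|\nabla M_j|\le L$ and $\det\nabla M_j\ge L^{-n}$. Since $M_j(\overline\Omega)\subset\overline\Omega$, the maps are bounded in $W^{1,\infty}$.
\item Arzel\`a--Ascoli gives uniform convergence, and we also get weak convergence in $W^{1,n}$, weak-$*$ convergence in $L^\infty$, and $f_j\rightharpoonup f$ in $L^2$.
\item Uniform limits preserve the two-sided Lipschitz bounds in \eqref{ex1.3}. They also preserve $X_t[M](\overline\Omega)=\overline\Omega$: compactness gives one inclusion, and the equicontinuity of the maps together with surjectivity at each $j$ gives the other.
\end{enumerate}
The remaining constraints defining $\mathcal V^{\rho_1}$ pass to the limit exactly as in the proof of Theorem~\ref{TH5.1}. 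The endpoint identity follows from weak continuity of minors and uniform convergence of $\rho_1(M_j)$. The signs $J_t\ge0$ and $R_t\ge0$ follow because $J_t[M_j]\to J_t[M]$ weakly, uniformly in $t$ by the polynomial structure, and this combines with $f_j\rightharpoonup f$. Lower semicontinuity of $G$ is the content of Theorem~\ref{TH5.1}; there $G$ is the $\alpha$-independent part of $E_\alpha$, and the source term is handled separately by weak lower semicontinuity of the norm.

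\emph{Attainment of $b_L$ and $g_L$.} Take a minimizing sequence for $\|f\|^2$ in $\mathcal A_L^{\rho_1}$. By the compactness fact it has a limit pair in $\mathcal A_L^{\rho_1}$, and weak lower semicontinuity of the norm shows this pair attains $b_L$. Hence $\mathcal S_L^{\rho_1}\neq\varnothing$.

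Now take a minimizing sequence for $G$ in $\mathcal S_L^{\rho_1}$. Its sources have norm exactly $\sqrt{b_L}$. Any weak limit has norm at most $\sqrt{b_L}$ by lower semicontinuity and at least $\sqrt{b_L}$ by minimality of $b_L$. So the norms converge, the convergence $f_j\to f$ is strong, and the limit lies in $\mathcal S_L^{\rho_1}$. Lower semicontinuity of $G$ then shows that $g_L$ is attained. Note $g_L<\infty$: $R_t$ is bounded, since $J_t\le L^n$ and $f\in L^2$, and $\Omega$ is bounded.

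\emph{$\Gamma$-convergence.} Let $(M_j,f_j)\xrightarrow{\tau}(M,f)$ with $\alpha_j\to0$, and assume $\liminf_j\widehat{\mathcal F}_{\alpha_j}(M_j,f_j)<\infty$; pass to a subsequence realizing the liminf.

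\emph{$\Gamma$-liminf.} For such a subsequence,
$$\widehat{\mathcal F}_{\alpha_j}(M_j,f_j)=G(M_j,f_j)+\frac{\|f_j\|^2-b_L}{\alpha_j},$$
and both terms are nonnegative. Boundedness therefore forces $\|f_j\|^2-b_L=O(\alpha_j)\to0$. Since the $f_j$ are $L^2$-bounded, the compactness fact applies: the uniform limit $M$ makes $(M,f)\in\mathcal A_L^{\rho_1}$ (uniform convergence identifies it with the $\tau$-limit), and $\|f\|^2\le b_L$. Thus $(M,f)\in\mathcal S_L^{\rho_1}$. Discarding the nonnegative second term and using lower semicontinuity of $G$ gives $\liminf\ge G(M,f)=\widehat{\mathcal F}_0(M,f)$.

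\emph{$\Gamma$-limsup.} For $(M,f)\in\mathcal S_L^{\rho_1}$, the constant sequence is a recovery sequence, since $\widehat{\mathcal F}_\alpha(M,f)=G(M,f)$ exactly.

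\emph{Minima and minimizers.} Minimizers of $E_\alpha$ exist by Theorem~\ref{TH1.4}. Comparing with a pair attaining $g_L$ gives
$$G(M_\alpha,f_\alpha)+\frac{\|f_\alpha\|^2-b_L}{\alpha}\le g_L.$$
Both terms on the left are nonnegative, so the sources are bounded and the compactness fact applies along any $\alpha_j\to0$. By the $\Gamma$-liminf argument, every subsequential limit lies in $\mathcal S_L^{\rho_1}$ with $G(M,f)\le\liminf_j G(M_{\alpha_j},f_{\alpha_j})\le g_L$. Minimality of $g_L$ forces equality, so the limit has $G(M,f)=g_L$.

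Combining the liminf with the comparison bound then yields $G(M_{\alpha_j},f_{\alpha_j})\to g_L$ and $(\|f_{\alpha_j}\|^2-b_L)/\alpha_j\to0$. In particular $\|f_{\alpha_j}\|\to\sqrt{b_L}=\|f\|$, which upgrades weak convergence of the sources to strong convergence in $L^2$. Since every sequence $\alpha_j\to0$ has a subsequence along which these limits hold, they hold for the whole family, giving \eqref{eq:minimum-source-energy-limits} and \eqref{eq:minimum-source-expansion}.

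The main obstacle is the closedness of $\mathcal A_L^{\rho_1}$, in particular passing the sign and endpoint constraints to the limit under only weak convergence of the sources. For this I would invoke the arguments of Theorem~\ref{TH5.1}; the uniform $W^{1,\infty}$ bounds available here make those arguments easier than at the critical regularity $W^{1,n}$.
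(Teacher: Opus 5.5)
Your proposal is correct and follows essentially the same route as the paper. Both arguments use the same ingredients:
\begin{itemize}
\item compactness and closedness of $\mathcal A_L^{\rho_1}$, which the paper simply cites as Lemma~\ref{L6.1} rather than re-deriving;
\item lower semicontinuity of $G$ from the proof of Theorem~\ref{TH5.1};
\item the $\Gamma$-liminf via $0\le\|f_j\|^2-b_L\le\alpha_j\widehat{\mathcal F}_{\alpha_j}(M_j,f_j)$;
\item constant recovery sequences;
\item the comparison bound $G(M_\alpha,f_\alpha)+(\|f_\alpha\|^2-b_L)/\alpha\le g_L$, which gives convergence of minimizers and norms and hence strong $L^2$ convergence.
\end{itemize}
Two minor points. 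With the Frobenius norm the gradient bound is $|\nabla M_j|\le\sqrt n\,L$ rather than $L$. Finiteness of $g_L$ is most directly seen from $\|R_t\|_{L^1}\le\|\rho_0\|_{L^1}+L^n|\Omega|\,\|f\|_{L^1}$, or from the boundedness of $\rho_0$ forced by the endpoint constraint.
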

The following theorem combines the preceding results
with the local feasibility construction
in Section~\ref{subsec:local-feasibility}.
It gives a unified existence, static-dynamic
equivalence, and convergence result for H\"older
densities near a fixed reference pair,
without a separate nonemptiness assumption.

\begin{theorem}[Existence, equivalence, and convergence
for locally feasible densities]
\label{cor:local-existence-convergence}
Under \textup{(\ref{hyp:local-holder})},
there exists $\varepsilon>0$, depending only on
$\Omega,n,\beta,\gamma,\overline\rho,a,K,L$,
such that the following conclusions hold
whenever $d<\varepsilon$. There exists a constant-source pair $
(M_*,a)\in
\mathcal V_K^{\rho_1}\cap\mathcal A_L^{\rho_1}$. For every $\alpha>0$, $E_\alpha$ attains
its minimum on both $\mathcal V_K^{\rho_1}$
and $\mathcal A_L^{\rho_1}$.
Minimizers on $\mathcal A_L^{\rho_1}$
correspond to minimizers on
$\mathfrak D_L^{\rho_0,\rho_1}$
through the energy-preserving reconstruction. The conclusions of
Theorem~\ref{thm:minimum-source-limit}
apply with $
b_L=a^2$ and $g_L=m_a:=
\min\left\{
G(M,a):(M,a)\in\mathcal A_L^{\rho_1}
\right\}$. In particular,
$$
\min_{\mathcal A_L^{\rho_1}}E_\alpha
=
\min_{\mathfrak D_L^{\rho_0,\rho_1}}
\mathscr E_\alpha
=
\frac{a^2}{\alpha}+m_a+o(1)
\qquad\text{as }\alpha\to0^+.
$$
For any choice of minimizers
$(M_\alpha,f_\alpha)$ on $\mathcal A_L^{\rho_1}$,
$\|f_\alpha-a\|_{L^2(0,1)}
=o(\sqrt{\alpha})$.
\end{theorem}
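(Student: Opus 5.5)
The plan has three steps: construct a constant-source pair, invoke the existence, equivalence and $\Gamma$-convergence results stated earlier, and then identify $b_L$, $g_L$ and the rate for $f_\alpha$.

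\textbf{Step 1: constant-source pair.} This is the step I expect to be hardest. With $f\equiv a$ and $M=\mathrm{Id}+u$, the endpoint constraint reads
\[
\rho_1(x+u)\det(I+\nabla u)=\rho_0+a\int_0^1 J_t[\mathrm{Id}+u]\,dt .
\]
I linearize at $u=0$. Since $\overline\rho-a\ge0$, write $\rho_0=\overline\rho-a+\delta_0$ and $\rho_1=\overline\rho+\delta_1$; then $d$ controls the $C^{0,\beta}$ norms of $\delta_0,\delta_1$. Using $\int_0^1J_t\,dt=1+\tfrac12\operatorname{div}u+Q(\nabla u)$, the linear part is
\[
\operatorname{div}\bigl((\overline\rho-\tfrac a2)u\bigr).
\]
I therefore set $u=w/(\overline\rho-\tfrac a2)$, which is legitimate because $\min(\overline\rho-\tfrac a2)>0$. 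The equation becomes
\[
\operatorname{div}w=\delta_0-\delta_1+N(u),
\]
where $N$ collects the quadratic-and-higher minors and the terms coming from $\rho_1(x+u)-\rho_1(x)$.

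I solve this with a Bogovskii-type operator for $\operatorname{div}w=g$, $w|_{\partial\Omega}=0$. It maps mean-zero $C^{0,\beta}$ data to $C^{1,\beta}$ fields; the $C^{3,\beta}$ boundary supplies the Schauder-level regularity, and the assumption $\int(\rho_1-\rho_0)=a|\Omega|$ makes the data mean-zero. The nonlinear term must also have mean zero. For the minors this follows from their divergence form together with $u|_{\partial\Omega}=0$. For the $\rho_1$ term I would adjust by a constant or use the change of variables $\int\rho_1(M)\det\nabla M=\int\rho_1$, valid since $M$ is a diffeomorphism of $\overline\Omega$ fixing the boundary.

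I apply Schauder's fixed point theorem on a small ball in $C^{1,\beta'}$ with $\beta<\beta'<\gamma$ hmm, rather on a ball in $C^{1,\beta}$, taken as a compact convex subset of $C^{1}$. Continuity comes from the uniform continuity of $\rho_1$ and of the minors, and the ball is invariant once $d<\varepsilon$, since $N$ is quadratically small plus $d$-small. The main technical obstacle is that $\rho_1$ is only $C^{0,\beta}$, so $\rho_1(x+u)$ is not Lipschitz in $u$. This is why I use Schauder rather than Banach, and why $\overline\rho\in C^{1,\gamma}$ is needed to expand the reference part.

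The resulting $u$ has small $C^1$ norm. Then:
\begin{itemize}
\item $X_t=\mathrm{Id}+tu$ is bi-Lipschitz with constants close to $1$, hence within $L>1$, and maps $\overline\Omega$ onto itself, since $u=0$ on $\partial\Omega$ and degree theory applies.
\item $|\nabla M|^n\le K\det\nabla M$ holds because at $\nabla M=I$ we have $n^{n/2}<K$.
\item $\int\det\nabla M=|\Omega|$.
\item $R_t\ge0$ because $\rho_0+a\int_0^tJ_s$ is close to $\overline\rho-a+ta\ge\min(\overline\rho-a,\overline\rho)\ge0$. I may need to leave a slight margin or argue strict positivity via $\overline\rho-\tfrac a2>0$.
\end{itemize}
Hence $(M_*,a)\in\mathcal V_K^{\rho_1}\cap\mathcal A_L^{\rho_1}$.

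\textbf{Step 2: invoking the earlier theorems.} Both classes are now nonempty, and $\Omega$ is convex with $\rho_1\in C(\overline\Omega)$. Theorem~\ref{GTH5.2} then gives minimizers on $\mathcal V_K^{\rho_1}$, Theorem~\ref{TH1.4} gives minimizers on $\mathcal A_L^{\rho_1}$ and their dynamic counterparts, and Theorem~\ref{thm:minimum-source-limit} applies.

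\textbf{Step 3: identifying $b_L$, $g_L$ and the rate.} For $b_L=a^2$, integrate the endpoint constraint over $\Omega$ for any $(M,f)\in\mathcal A_L^{\rho_1}$. Since $X_t$ is a bi-Lipschitz homeomorphism onto $\overline\Omega$, $\int J_t=|\Omega|$ for every $t$, and $\int\rho_1(M)\det\nabla M=\int\rho_1$. This gives $\int_0^1f=a$, and Jensen yields $\|f\|^2\ge a^2$, with equality iff $f\equiv a$. So $b_L=a^2$, $\mathcal S_L^{\rho_1}=\{(M,a)\in\mathcal A_L^{\rho_1}\}$, and $g_L=m_a$.

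The displayed expansion combines \eqref{eq:static-dynamic-equality} with \eqref{eq:minimum-source-expansion}. For the rate, since $\int_0^1 f_\alpha=a$, we have
\[
\|f_\alpha-a\|^2=\|f_\alpha\|^2-a^2=o(\alpha)
\]
by \eqref{eq:minimum-source-energy-limits}, so $\|f_\alpha-a\|_{L^2(0,1)}=o(\sqrt\alpha)$.
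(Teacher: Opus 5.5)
Your overall route is the paper's. You reduce to a local feasibility construction: linearization $\operatorname{div}(\sigma u)$ with $\sigma=\overline\rho-\frac a2$, a right inverse of the divergence with $C^{0,\beta}\to C^{1,\beta}$ bounds, and Schauder on a small $C^{1,\beta}$ ball. You then invoke Theorems~\ref{GTH5.2}, \ref{TH1.4}, \ref{thm:minimum-source-limit} together with $\|f\|_{L^2(0,1)}^2=a^2+\|f-a\|_{L^2(0,1)}^2$. Your Steps~2 and~3 are fine.

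\textbf{Nonnegativity of $R_t$ (genuine gap).} Your check of $R_t[M_*,a]\ge0$ does not work as written. Hypothesis \textup{(\ref{hyp:local-holder})} only gives $\overline\rho\ge0$ and $\overline\rho-a\ge0$, not strict positivity, and $\rho_0,\rho_1$ are merely nonnegative. For instance, $a<0$ with $\overline\rho$ vanishing at an interior point is allowed; then $\overline\rho-\frac a2>0$ holds automatically, and $\overline\rho-a+ta$ vanishes there at $t=1$. Being within $O(d)$ of a function that touches zero only yields $R_t\ge -Cd$. Neither a ``margin'' nor $\min(\overline\rho-\frac a2)>0$ helps, because the latter says nothing about $\min\{\overline\rho-a,\overline\rho\}$.

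The missing idea is structural rather than perturbative. For a constant source, $\partial_tR_t[M,a]=aJ_t[M]$ with $J_t[M]>0$, so $t\mapsto R_t[M,a](x)$ is monotone. It therefore lies between $R_0=\rho_0\ge0$ and $R_1=\rho_1(M)\det\nabla M\ge0$, the latter being exactly the endpoint equation you have just solved. This is how the paper closes Step~5 of Proposition~\ref{prop:local-feasibility}.

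\textbf{Continuity of the fixed-point map (smaller gap).} Suppose $u_j\to u$ in $C^1$ inside the $C^{1,\beta}$ ball. Uniform continuity of $\rho_1$ and of the minors gives only uniform convergence of the right-hand sides. A right inverse of the divergence is not available as a bounded map from $C^0$ into $C^1$. Boundedness in $C^{1,\beta}$ plus compactness does not identify the limit either, since the problem $\operatorname{div}w=g$, $w=0$ on $\partial\Omega$, has a large kernel.

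You need one of two extra ingredients. Either the same linear operator is bounded at a second exponent $\eta<\beta$, as the paper takes from Dacorogna--Moser, or it is bounded $L^p\to W^{1,p}$. With the first, data bounded in $C^{0,\beta}$ and convergent in $C^0$ converge in $C^{0,\eta}$ by interpolation, so the images converge in $C^{1,\eta}$ and hence in $C^1$.

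\textbf{Minor inaccuracies.} The remainder $\overline\rho(\mathrm{Id}+u)-\overline\rho-\nabla\overline\rho\cdot u$ is only $O\bigl(\|u\|_{C^{1,\beta}}^{1+\gamma-\beta}\bigr)$ in $C^{0,\beta}$, not quadratic. Superlinearity is enough for invariance of the ball. Also, ``adjusting by a constant'' is not an option for the mean-zero condition; the change-of-variables identity is the correct justification.
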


\subsection{Organization of the paper}
The paper is organized as follows.

Section~\ref{sec2} recalls the weak continuity and
variational convergence results used in the proofs.
Sections~\ref{sec3} and~\ref{sec4} establish lower semicontinuity
and existence for bounded and whole-space targets,
respectively.
Section~\ref{sec5} proves the static-dynamic correspondence,
its stability, and the source-penalty limits.
Section~\ref{sec6} gives a local feasibility result
for H\"older densities and explicit examples
illustrating nonemptiness and the role of constant-source admissibility.

\section{Notation and preliminaries}\label{sec2}
\subsection{Notation}
Throughout the paper, $\Omega\subset\mathbb R^n$
denotes the bounded open domain, with the geometric
assumptions specified in each setting, and $|\Omega|$ denotes its $n$-dimensional Lebesgue measure. $|A|$ denotes the Frobenius norm
of a matrix $A$, while $|x|$ denotes the Euclidean norm
of a vector $x$. Let $C_c(\Omega)$ denote the space of continuous real-valued functions on $\Omega$ with compact support. We denote by $\mathcal{D}(\Omega)$ the space of smooth functions with compact support in $\Omega$. $\operatorname{supp}\phi$ denotes  the support of $\phi$. Let $U$ and $V$ be open subsets of $\mathbb{R}^{n}$.  We write $V\Subset U$, if $V\subset\overline{{V}}\subset U$ and $\overline{V}$ is compact, and say $V$ is compactly contained in $U$. $\chi_{U}$ denotes the characteristic function of $U$, namely,
$$
\chi_{U}(x)
=
\begin{cases}
1, & x\in U,\\
0, & x\in\Omega\setminus U.
\end{cases}
$$

\subsection{Weak continuity of determinants and minors}
Since the formulation of the unbalanced optimal transport problem involves an integral containing the determinant, the convergence of determinants under nonsmooth mappings is essential for establishing lower semicontinuity. We therefore recall the notion of weak continuity of the determinant.
Let $u=(u^{1},\cdots,u^{n})\in C^{2}(\Omega;\mathbb{R}^{n})$. We denote
$$
\det \nabla u
= \frac{\partial (u^1,\cdots,u^n)}
{\partial (x_1,\cdots,x_n)}
$$
by the classical determinant of the Jacobian matrix. It can also be written as
$$
\det \nabla u
=
\sum_{\alpha=1}^{n}(-1)^{\alpha+1}
\frac{\partial u^1}{\partial x_\alpha}
\frac{\partial (u^2,\cdots,u^n)}
{\partial (x_1,\cdots,x_{\alpha-1},x_{\alpha+1},\cdots,x_n)}.
$$
Here ${\partial (u^2,\cdots,u^n)}/
{\partial (x_1,\cdots,x_{\alpha-1},x_{\alpha+1},\cdots,x_n)}
$ denotes the determinant of the corresponding minor matrix. For $2\leq s\leq n$, we define the generalized adjugate operator
$\operatorname{adj}_s\nabla u$ by
$$
\operatorname{adj}_s \nabla u = \left( (\operatorname{adj}_s \nabla u)^i_\alpha \right)_{1 \leq \alpha \leq \binom{n}{s}}^{1 \leq i \leq \binom{n}{s}},
$$
where
$$
(\operatorname{adj}_s\nabla u)^i_{\ \alpha}
=
(-1)^{i+\alpha}
\frac{\partial
(u^{i_1},\dots,u^{i_s})}
{\partial
(x_{\alpha_1},\dots,x_{\alpha_s})}.
$$
Here $(i_1,\dots,i_s)$ and $(\alpha_1,\dots,\alpha_s)$ are the $s$-tuples corresponding to the indices $i$ and $\alpha$, respectively.

\begin{definition}\cite{B3}
Let $\Omega\subset\mathbb{R}^n$ be an open set and let
$f_v,f\in L^1_{\mathrm{loc}}(\Omega)$.
We say that $f_v$ converges to $f$ in the sense of distributions, and write
$$
f_v\rightharpoonup f
\quad\text{in }\mathcal{D}'(\Omega),
$$
if
$$
\int_\Omega f_v(x)\varphi(x)\,dx
\longrightarrow
\int_\Omega f(x)\varphi(x)\,dx
$$
for every $\varphi\in\mathcal{D}(\Omega)$.
\end{definition}

We now state the main theorem (see \cite{B3} for details), which establishes the weak continuity of determinants.
\begin{theorem}\label{GTH2.4}
Let $\Omega \subset \mathbb{R}^n$ be a bounded open set, $1 < p < \infty$, and let  
$$
u_\nu \rightharpoonup u\quad\text{ in } W^{1,p}(\Omega; \mathbb{R}^n).
$$

(1) If $ p \geq n $, then
$$
\det \nabla u_\nu \rightharpoonup \det \nabla u \quad\text{ in } \mathcal{D}'(\Omega)
$$
and if $ p > n $, then
$$
\det \nabla u_\nu \rightharpoonup \det \nabla u \quad\text{in } L^{p/n}(\Omega).
$$

(2) Let $2 \leq s \leq n$ and $p \geq s $. Then
$$
\operatorname{adj}_s \nabla u_\nu \rightharpoonup \operatorname{adj}_s \nabla u \quad \text{in } \mathcal{D}' \left( \Omega; \mathbb{R}^{\sigma(s)} \right).
$$
Furthermore, if $p > s$, then
$$
\text{adj}_s \nabla u_\nu \rightharpoonup \text{adj}_s \nabla u \quad \text{in } L^{p/s} \left( \Omega; \mathbb{R}^{\sigma(s)} \right).
$$
Here $\sigma(s):=\binom{n}{s}^2$.
\end{theorem}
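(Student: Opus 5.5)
The statement just above is Theorem~\ref{GTH2.4}, the classical weak continuity of determinants and minors, cited from \cite{B3}. I would prove it by induction on the order $s$ of the minors, using the divergence (null-Lagrangian) structure of Jacobian minors.

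\emph{Step 1: divergence identity.} For smooth $u$ I first write each $s\times s$ minor as a divergence,
$$
\frac{\partial(u^{i_1},\dots,u^{i_s})}{\partial(x_{\alpha_1},\dots,x_{\alpha_s})}
=\sum_{k=1}^s(-1)^{k+1}\partial_{\alpha_k}\Bigl(u^{i_1}\,
\frac{\partial(u^{i_2},\dots,u^{i_s})}{\partial(x_{\alpha_1},\dots,\widehat{x_{\alpha_k}},\dots,x_{\alpha_s})}\Bigr).
$$
This follows from the Piola identity: the cofactors of an $(s-1)$-minor are divergence free. By density of smooth maps in $W^{1,p}$, with $p\ge s$, the identity extends distributionally. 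Here both sides are well defined, because the $(s-1)$-minor lies in $L^{p/(s-1)}$ and $u\in L^{q}$ with $1/q+ (s-1)/p\le 1$ by Sobolev embedding. The case $p=n=s$ needs slightly more care: $u\in L^q$ for every $q<\infty$, and $(s-1)/n+1/q<1$ still holds.

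\emph{Step 2: induction on $s$.} For $s=1$ the minors are the gradient entries, which converge weakly by assumption. Suppose that $(s-1)$-minors of $u_\nu$ converge weakly in $L^{p/(s-1)}$ to those of $u$; this is valid since $p>s-1$. By Rellich, $u_\nu\to u$ strongly in $L^q_{\mathrm{loc}}$ for every $q<p^*$ (every $q<\infty$ if $p\ge n$). Choosing $q$ with $1/q+(s-1)/p\le 1$, which is possible because $p\ge s$, the product of a strongly and a weakly convergent sequence converges weakly in $L^1_{\mathrm{loc}}$. Testing the divergence identity against $\varphi\in\mathcal D(\Omega)$ and integrating by parts then gives distributional convergence of the $s$-minors. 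This proves the first claims of both (1) and (2).

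\emph{Step 3: upgrade for $p>s$.} The sequence $\operatorname{adj}_s\nabla u_\nu$ is bounded in $L^{p/s}$ by Hadamard's inequality, $|\text{minor}|\le C|\nabla u_\nu|^s$, and $L^{p/s}$ is reflexive. Every subsequence therefore has a weakly convergent subsequence, whose limit must coincide with the distributional limit $\operatorname{adj}_s\nabla u$. Hence the whole sequence converges weakly in $L^{p/s}$.

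\emph{Main obstacle.} The delicate point is the borderline case $p=s$, in particular $p=n$. There the product in Step 2 only pairs $L^{n/(n-1)}$-weak convergence with strong convergence in $L^{q}$ for $q<\infty$, while we would need $q=n$. I would handle this by exploiting that $p=s$ puts the $(s-1)$-minors in $L^{p/(s-1)}$ with $p/(s-1)>1$. I would then choose $q$ as the dual exponent $p/(p-s+1)=s$, which is strictly below $p^*$ whenever $p<n$; in that case strong convergence in $L^s_{\mathrm{loc}}$ holds. For $p=n$ the dual exponent is $n<\infty$, so compactness in $L^n_{\mathrm{loc}}$ holds by Rellich. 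Finally, justifying the identity for $W^{1,p}$ maps at the critical exponent requires the same Hölder pairing, applied via approximation.
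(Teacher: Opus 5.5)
Your proposal is correct. It is the classical argument for this result: the paper gives no proof of its own and simply cites \cite{B3}, where weak continuity of minors is obtained in this way. One writes each $s$-minor as a divergence of $u^{i_1}$ times $(s-1)$-minors, inducts on $s$, and for $p>s$ upgrades to weak $L^{p/s}$ convergence using boundedness and uniqueness of distributional limits.

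The borderline case $p=s$ that you flag as the main obstacle is not actually delicate. Rellich gives $u_\nu\to u$ strongly in $L^p_{\mathrm{loc}}$, and $1/p+(s-1)/p=s/p\le 1$, so the choice $q=p$ works in every case without any Sobolev embedding.
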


The following lemma extends the weak continuity of
Jacobians to continuous weights depending on the maps.
It will be used in the lower semicontinuity and
closedness arguments below.
\begin{lemma}[Weak continuity of weighted Jacobians]
\label{L2.1}
Let $n\ge2$ and let $\Omega\subset\mathbb R^n$ be a
bounded open set with Lipschitz boundary.
Suppose that
$v_k,v\in W^{1,n}(\Omega;\mathbb R^n)
\cap L^\infty(\Omega;\mathbb R^n)$ satisfy
$$
v_k\rightharpoonup v
\quad\text{in }W^{1,n}(\Omega;\mathbb R^n),
\qquad
v_k\rightharpoonup^*v
\quad\text{in }L^\infty(\Omega;\mathbb R^n).
$$
Then, for every $h\in C(\mathbb R^n)$,
$$
h(v_k)\det\nabla v_k
\rightharpoonup
h(v)\det\nabla v
\quad\text{in }\mathcal D'(\Omega).
$$
\end{lemma}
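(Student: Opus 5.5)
The plan is to reduce to polynomial weights by uniform approximation, and to prove the polynomial case from the weak continuity of minors (Theorem~\ref{GTH2.4}) through a composition identity.

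\textbf{Step 1: reduction.} Set $R:=\sup_k\|v_k\|_{L^\infty}$. Weak-$*$ convergence gives $R<\infty$ and $\|v\|_{L^\infty}\le R$. All values of $v_k$ and $v$ therefore lie in the compact ball $\overline B_R$. By Stone--Weierstrass, for $\varepsilon>0$ there is a polynomial $P$ with $\sup_{\overline B_R}|h-P|<\varepsilon$. Hence, for $\varphi\in\mathcal D(\Omega)$,
\[
\Bigl|\int_\Omega (h-P)(v_k)\det\nabla v_k\,\varphi\,dx\Bigr|\le \varepsilon\|\varphi\|_\infty\sup_k\|\nabla v_k\|_{L^n}^n .
\]
The right-hand side is $\le C\varepsilon$ because weakly convergent sequences are bounded. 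The same estimate holds for $v$. It therefore suffices to treat $h=P$, and by linearity a single monomial $h(y)=y^\gamma$.

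\textbf{Step 2: divergence-form identity for smooth maps.} For $u\in C^2$, the chain rule gives $\det\nabla(H(u^1),u^2,\dots,u^n)=H'(u^1)\,\partial_{y_1}\cdots$. More usefully, choose $H$ with $\partial_{y_1}H=h$, for example $H(y)=\int_0^{y_1}h(s,y_2,\dots,y_n)\,ds$. Then
\[
h(u)\det\nabla u=\det\nabla\bigl(H(u),u^2,\dots,u^n\bigr)-\sum_{i\ge2}\partial_{y_i}H(u)\,\det\nabla\bigl(u^i,u^2,\dots,u^n\bigr).
\]
The last determinants vanish because they have repeated rows. Hence
\[
h(u)\det\nabla u=\det\nabla\bigl(H(u),u^2,\dots,u^n\bigr),
\]
since $\nabla H(u)=\sum_i\partial_iH(u)\nabla u^i$ and only the $i=1$ term survives. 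We extend this identity to $W^{1,n}\cap L^\infty$ by density. The approximants are mollifications, bounded in $L^\infty$ and convergent in $W^{1,n}$. Both sides converge in $L^1_{\rm loc}$ because $H$ and $h$ are smooth with bounded derivatives on $\overline B_R$.

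\textbf{Step 3: passage to the limit.} Set $w_k:=(H(v_k),v_k^2,\dots,v_k^n)$. Since $H$ is $C^1$ on $\overline B_R$, the sequence $w_k$ is bounded in $W^{1,n}$. On each $\Omega'\Subset\Omega$, Rellich gives $v_k\to v$ in $L^n$ strongly, hence a.e. along subsequences. Then $H(v_k)\to H(v)$ in $L^q$, so $w_k\rightharpoonup w:=(H(v),v^2,\dots,v^n)$ weakly in $W^{1,n}(\Omega')$. The limit is identified uniquely, so the whole sequence converges. Theorem~\ref{GTH2.4}(1) with $p=n$ then gives $\det\nabla w_k\rightharpoonup\det\nabla w$ in $\mathcal D'(\Omega')$. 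Combined with the identity of Step~2, this proves the claim for monomials, and Step~1 concludes. The Lipschitz boundary is used only for the compact embedding; locally, interior balls suffice.

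\textbf{Main obstacle.} The delicate point is the justification of the identity in Step~2 at the critical exponent $p=n$. There $\det\nabla u$ is only in $L^1$, and the convergence of $h(u_\varepsilon)\det\nabla u_\varepsilon$ needs $h(u_\varepsilon)\to h(u)$ boundedly a.e. together with $\det\nabla u_\varepsilon\to\det\nabla u$ in $L^1$. This holds by multilinearity and strong $W^{1,n}$ convergence of the mollifications, combined with dominated convergence for the product. This is also why the $L^\infty$ bound is essential.
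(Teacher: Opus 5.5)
Your proposal is correct and follows essentially the same route as the paper. The paper also composes with $\Psi_h(y)=(H_h(y),y_2,\dots,y_n)$, where $H_h(y)=\int_0^{y_1}h(t,y_2,\dots,y_n)\,dt$, so that $\det D\Psi_h=h$; it identifies the weak limit $\Psi_h\circ v_k\rightharpoonup\Psi_h\circ v$ in $W^{1,n}$ via strong $L^n$ convergence, applies Theorem~\ref{GTH2.4}, and then passes from smooth to continuous $h$ by uniform approximation on $\overline{B_R}$, using the uniform $L^1$ bound on the Jacobians. The differences are minor: you approximate by polynomials rather than smooth functions, you justify $h(v)\det\nabla v=\det\nabla(\Psi_h\circ v)$ by mollification instead of citing the Sobolev chain rule (so your ``main obstacle'' is just the standard chain rule for $C^1$ maps with bounded derivative on the range), and you apply Rellich on interior balls rather than using the Lipschitz boundary.
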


\begin{proof}
By compactness of the embedding
$W^{1,n}(\Omega)\hookrightarrow L^n(\Omega)$, we obtain that $
v_k\to v$ strongly in $L^n(\Omega;\mathbb R^n)$. The weak-$^*$ convergence in $L^\infty$ gives
a constant $R>0$ such that $
|v_k(x)|\le R$ and $|v(x)|\le R
$ for almost every $x\in\Omega$ and every $k$. We first consider $h\in C^1(\mathbb R^n)$.
For $y=(y_1,\ldots,y_n)\in\mathbb R^n$, define
$$
H_h(y):=\int_0^{y_1}h(t,y_2,\ldots,y_n)\,dt\quad\text{and}\quad\Psi_h(y):=(H_h(y),y_2,\ldots,y_n).
$$
Then $\Psi_h\in C^1(\mathbb R^n;\mathbb R^n)$ and
$$
D\Psi_h(y)=
\begin{pmatrix}
h(y) & \partial_{y_2}H_h(y) & \cdots
     & \partial_{y_n}H_h(y)\\
0 & 1 & \cdots & 0\\
\vdots & \vdots & \ddots & \vdots\\
0 & 0 & \cdots & 1
\end{pmatrix}.
$$
Thus,
$$
\det D\Psi_h(y)=h(y).
$$
We next verify the weak convergence of the
compositions. Since $D\Psi_h$ is continuous,
$$
C_{h,R}:=
\sup_{y\in\overline{B_R}}|D\Psi_h(y)|<\infty.
$$
In particular, $\Psi_h$ is Lipschitz on
$\overline{B_R}$.
The Sobolev chain rule gives
$\nabla(\Psi_h\circ v_k)=
D\Psi_h(v_k)\nabla v_k$ a.e. in $\Omega$. The chain rule is applicable because the maps $v_k$
take values in $\overline{B_R}$, on which the
derivative of $\Psi_h$ is bounded. Equivalently,
one may first replace $\Psi_h$ by a globally
Lipschitz $C^1$ map agreeing with it on a
neighborhood of $\overline{B_R}$. It follows that
$$
\|\nabla(\Psi_h\circ v_k)\|_{L^n(\Omega)}
\le
C_{h,R}\|\nabla v_k\|_{L^n(\Omega)}.
$$
Also, $\Psi_h$ is bounded on $\overline{B_R}$, so
$$
\|\Psi_h\circ v_k\|_{L^n(\Omega)}
\le
|\Omega|^{1/n}
\sup_{y\in\overline{B_R}}|\Psi_h(y)|.
$$
Hence $\{\Psi_h\circ v_k\}$ is uniformly bounded in
$W^{1,n}(\Omega;\mathbb R^n)$. On the other hand, the Lipschitz continuity of
$\Psi_h$ on $\overline{B_R}$ yields
$$
\|\Psi_h(v_k)-\Psi_h(v)\|_{L^n(\Omega)}
\le
C_{h,R}\|v_k-v\|_{L^n(\Omega)}
\rightarrow0.
$$
Therefore, $
\Psi_h\circ v_k
\rightharpoonup
\Psi_h\circ v$ in $W^{1,n}(\Omega;\mathbb R^n)$. Indeed, every subsequence admits
a further subsequence converging weakly in
$W^{1,n}$. The strong $L^n$ convergence identifies
every such weak limit with $\Psi_h\circ v$.
This proves weak convergence of the whole sequence. Applying Theorem \ref{GTH2.4} to these compositions, we obtain that
$
\det\nabla(\Psi_h\circ v_k)
\rightharpoonup
\det\nabla(\Psi_h\circ v)$ in $\mathcal D'(\Omega)$. The chain rule and multiplicativity of the
determinant yield
$$
\det\nabla(\Psi_h\circ v_k)
=
\det D\Psi_h(v_k)\det\nabla v_k
=
h(v_k)\det\nabla v_k,
$$
and the same identity holds for $v$.
This proves the assertion for $h\in C^1(\mathbb R^n)$. Now let $h\in C(\mathbb R^n)$.
Choose $h_m\in C^\infty(\mathbb R^n)$ such that
$$
\|h_m-h\|_{C(\overline{B_R})}\rightarrow0.
$$
The uniform $W^{1,n}$ bound gives
$$
C_J:=
\sup_k\|\det\nabla v_k\|_{L^1(\Omega)}
+\|\det\nabla v\|_{L^1(\Omega)}
<\infty.
$$
For every $\varphi\in C_c^\infty(\Omega)$,
$$
\begin{aligned}
&\left|
\int_\Omega\varphi
\bigl[h(v_k)\det\nabla v_k
      -h(v)\det\nabla v\bigr]\,dx
\right|
\\
&\quad\le
\left|
\int_\Omega\varphi
\bigl[h_m(v_k)\det\nabla v_k
      -h_m(v)\det\nabla v\bigr]\,dx
\right|
\\
&\qquad\quad+
C_J\|\varphi\|_{L^\infty(\Omega)}
\|h_m-h\|_{C(\overline{B_R})}.
\end{aligned}
$$
For fixed $m$, the first term tends to zero as
$k\to\infty$ by the preceding argument, and then $m\to\infty$
proves the conclusion.
\end{proof}
\subsection{$\Gamma$-convergence}
We recall sequential $\Gamma$-convergence and its
consequences for minimization.
These results provide the variational framework
for the source-penalty limits studied in Section~\ref{sec5.2}.

\begin{definition}
Let $Y$ be a topological space and let
$g_k,\,g:Y\to\mathbb R\cup\{+\infty\}$.
We say that $g_k$ sequentially $\Gamma$-converges to $g$ in $Y$, and write
$$
g_k\xrightarrow{\Gamma}g,
$$
if for every $y\in Y$ the following two conditions hold:
\begin{enumerate}
\item[(i)]
For every sequence $y_k\to y$ in $Y$,
$$
g(y)\le\liminf_{k\to\infty}g_k(y_k).
$$
\item[(ii)]
There exists a sequence $y_k\to y$ in $Y$ such that
$$
\limsup_{k\to\infty}g_k(y_k)\le g(y).
$$
\end{enumerate}
\end{definition}

\begin{proposition}\label{P4.1}\cite{G1}
Let $Y$ be a topological space and let
$g_k,g:Y\to\mathbb R\cup\{+\infty\}$.
Assume that
\begin{enumerate}
\item[(i)] $g_k\xrightarrow{\Gamma}g$ in $Y$;
\item[(ii)] the family $\{g_k\}$ is equicoercive, in the sense that for
every $t\in\mathbb R$ there exists a sequentially compact set $K_t\subset Y$ such that
$$
\{g_k\le t\}\subset K_t\quad\text{for all }k.$$
\end{enumerate}
Then
$$
\inf_Y g=
\lim_{k\to\infty}\inf_Y g_k.$$
Moreover, if $y_k$ minimizes $g_k$ for every $k$, then every subsequential limit of $\{y_k\}$
is a minimizer of $g$.
\end{proposition}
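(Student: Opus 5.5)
The statement immediately above is Proposition~\ref{P4.1}, the fundamental theorem of $\Gamma$-convergence. I will prove it directly from the two defining inequalities together with equicoercivity.

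\textbf{Upper bound.} Fix any $y\in Y$ with $g(y)<\infty$. Condition (ii) of the definition gives a recovery sequence $y_k\to y$ with $\limsup_k g_k(y_k)\le g(y)$. Hence
\[
\limsup_k\inf_Y g_k\le\limsup_k g_k(y_k)\le g(y),
\]
and taking the infimum over $y$ gives $\limsup_k\inf_Y g_k\le\inf_Y g$. If $g\equiv+\infty$ this inequality is trivial.

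\textbf{Lower bound.} This is where equicoercivity enters, and I expect it to be the only real obstacle. Set $\ell:=\liminf_k\inf_Y g_k$; if $\ell=+\infty$ there is nothing to prove. Otherwise choose a subsequence $k_j$ with $\inf_Y g_{k_j}\to\ell$, and pick $y_{k_j}$ with $g_{k_j}(y_{k_j})\le\inf_Y g_{k_j}+1/j$. Two cases arise.
\begin{enumerate}
\item[(a)] If $\ell$ is finite, then for large $j$ the points $y_{k_j}$ lie in the sublevel set $\{g_{k_j}\le\ell+1\}$, which is contained in the sequentially compact set $K_{\ell+1}$. Extracting a further convergent subsequence $y_{k_j}\to y$, condition (i) gives
\[
g(y)\le\liminf_j g_{k_j}(y_{k_j})=\ell,
\]
so $\inf_Y g\le\ell$.
\item[(b)] If $\ell=-\infty$, the same argument with any fixed level $t$ shows that $y_{k_j}$ eventually lies in $K_t$. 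Any limit point $y$ then satisfies $g(y)\le -N$ for every $N$. This is impossible since $g$ takes values in $\mathbb R\cup\{+\infty\}$, so the case cannot occur.
\end{enumerate}

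One subtlety is that condition (i) is stated for full sequences $y_k\to y$, while we apply it along the subsequence $k_j$. I will handle this by extending the subsequence to a full sequence converging to $y$: set $\tilde y_k=y$ for indices not among the $k_j$. Applying (i) to $\tilde y_k$, the $\liminf$ over all $k$ is at most the $\liminf$ along $k_j$, so the inequality for the subsequence follows. Combining both bounds, $\lim_k\inf_Y g_k$ exists and equals $\inf_Y g$.

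\textbf{Minimizers.} Let $y_k$ minimize $g_k$, and let $y_{k_j}\to y$ be a convergent subsequence. Using the same extension device with condition (i),
\[
g(y)\le\liminf_j g_{k_j}(y_{k_j})=\lim_j\inf_Y g_{k_j}=\inf_Y g,
\]
so $y$ minimizes $g$. When $\inf_Y g=+\infty$ every point of $Y$ is a minimizer, so the statement is trivial in that case.
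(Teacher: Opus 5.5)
Your argument is correct. The paper gives no proof of its own and simply cites \cite{G1}; your proof is the standard one found there: recovery sequences give $\limsup_k\inf_Y g_k\le\inf_Y g$, equicoercivity plus the liminf inequality along subsequences (your extension device is valid) gives the reverse bound, and the minimizer statement follows. The only cosmetic repair is in case (b): when $\inf_Y g_{k_j}=-\infty$ the choice $g_{k_j}(y_{k_j})\le\inf_Y g_{k_j}+1/j$ is meaningless, so simply choose $y_{k_j}$ with $g_{k_j}(y_{k_j})\to-\infty$ and extract the convergent subsequence from the fixed set $K_0$.
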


\section{Existence of Minimizers for Bounded Targets}\label{sec3}
In this section, we prove Theorems~\ref{TH5.1}
and~\ref{GTH5.2}.
We retain the assumptions of the bounded-target
setting, together with the ambient space $Y$
and the topology $\tau$ introduced above.
We first establish sequential closedness of the
admissible classes and lower semicontinuity of
$E_\alpha$ on $\mathcal V^{\rho_1}$.
We then obtain minimizers on $\mathcal V_K^{\rho_1}$
using the compactness provided by the distortion
and Jacobian mass bounds. The distortion inequality $
|\nabla M|^n\le K\det\nabla M
$
controls local stretching relative to volume change
and restricts anisotropic deformation.
For an orientation-preserving bi-Lipschitz map,
the change-of-variables formula gives
$$
\int_\Omega\det\nabla M\,dx=|M(\Omega)|.
$$
In particular, this integral equals $|\Omega|$
when $M$ maps $\Omega$ onto itself.
Thus the Jacobian mass bound in
$\mathcal V_K^{\rho_1}$ is consistent with such
transport maps.

\subsection{Sequential closedness of the admissible class}
We first prove that the admissible classes $\mathcal V^{\rho_1}$ and $\mathcal V_{K}^{\rho_1}$ are closed with respect to the weak topology $\tau$.

\begin{lemma}\label{L3.1}
The sets $\mathcal V^{\rho_1}$ and
$\mathcal V_K^{\rho_1}$ are sequentially closed
with respect to $\tau$.
\end{lemma}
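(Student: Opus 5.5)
We take a sequence $(M_j,f_j)\in\mathcal V^{\rho_1}$ with $(M_j,f_j)\xrightarrow{\tau}(M,f)$ and check each defining condition of $\mathcal V^{\rho_1}$ for the limit in turn. The constraints are handled in this order: the range constraint, then the two nonnegativity constraints, then the endpoint constraint. For $\mathcal V_K^{\rho_1}$ we then add the distortion and Jacobian mass bounds. Throughout we use that $M_j\to M$ strongly in $L^n$ by Rellich, so along a subsequence $M_j\to M$ a.e. This gives $M(x)\in\overline\Omega$ a.e., since $\overline\Omega$ is closed, and hence $M\in X(\Omega;\overline\Omega)$. The maps $X_t[M_j]=(1-t)\mathrm{Id}+tM_j$ also converge in the same sense, and they are uniformly bounded because $\Omega$ is bounded.

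\emph{Nonnegativity of $J_t$ and $R_t$.}
The Jacobian $J_t[M_j]=\det\nabla X_t[M_j]$ is a polynomial in $t$ of degree at most $n$. Its coefficients are sums of minors of $\nabla M_j$ of orders $0,\dots,n$. By Theorem~\ref{GTH2.4}, minors of order $s<n$ converge weakly in $L^{n/s}$, and minors of order $n$ converge in $\mathcal D'(\Omega)$. Hence, for each $\varphi\in\mathcal D(\Omega)$ with $\varphi\ge0$, the polynomials $t\mapsto\int J_t[M_j]\varphi$ have coefficients converging to those of $t\mapsto\int J_t[M]\varphi$. These polynomials therefore converge uniformly on $[0,1]$.

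For $J$: $\int J_t[M]\varphi=\lim_j\int J_t[M_j]\varphi\ge0$ for every $\varphi\ge0$. Since $J_t[M]\in L^1$, this gives $J_t[M]\ge0$ a.e. for each fixed $t$.

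For $R$: write $\int R_t[M_j,f_j]\varphi=\int\rho_0\varphi+\int_0^t f_j(s)P_j(s)\,ds$, where $P_j(s)=\int J_s[M_j]\varphi$. Since $f_j\rightharpoonup f$ weakly in $L^2(0,1)$ and $P_j\to P$ uniformly, the product integral converges to $\int_0^tfP$. Fubini is legitimate because $J_s[M]\in L^1(\Omega)$ with bounds uniform in $s$. We conclude $R_t[M,f]\ge0$ a.e. for each $t$.

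\emph{Endpoint constraint.}
Here $\rho_1\in C(\overline\Omega)$; we extend it to a continuous function $h$ on $\mathbb R^n$ by Tietze. Lemma~\ref{L2.1} applies because the $M_j$ converge weakly in $W^{1,n}$ and weak-$*$ in $L^\infty$, and it gives $\rho_1(M_j)\det\nabla M_j\rightharpoonup\rho_1(M)\det\nabla M$ in $\mathcal D'(\Omega)$. On the other side, $R_1[M_j,f_j]\rightharpoonup R_1[M,f]$ in $\mathcal D'$ by the argument just given with $t=1$. Since the constraint holds in $\mathcal D'$ for each $j$, it passes to the limit.

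\emph{The class $\mathcal V_K^{\rho_1}$.}
We expect the distortion inequality $|\nabla M|^n\le K\det\nabla M$ to be the main obstacle, because $\det\nabla M_j$ converges only distributionally. The plan is a localized lower semicontinuity argument. For a ball $B\Subset\Omega$ and $0\le\varphi\in\mathcal D(\Omega)$, convexity of $A\mapsto|A|^n$ gives weak lower semicontinuity:
\[
\int\varphi|\nabla M|^n\le\liminf_j\int\varphi|\nabla M_j|^n\le K\lim_j\int\varphi\det\nabla M_j=K\int\varphi\det\nabla M .
\]
Since $\varphi\ge0$ is arbitrary, this yields $|\nabla M|^n\le K\det\nabla M$ a.e.

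For the mass bound, note that $\det\nabla M_j\ge0$, and for $0\le\varphi\le1$ in $\mathcal D(\Omega)$ we have $\int\varphi\det\nabla M=\lim_j\int\varphi\det\nabla M_j\le|\Omega|$. We have already shown $\det\nabla M\ge0$. Taking $\varphi\uparrow1$ and applying monotone convergence gives $\int_\Omega\det\nabla M\le|\Omega|$.

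This completes closedness of both classes. We pass to subsequences only for the a.e. statements, and those statements concern the limit alone, so the subsequence extraction is harmless.
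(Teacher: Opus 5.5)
Your proof is correct, and for most of the conditions it matches the paper's proof. The paper also expands $J_t$ in principal minors and invokes Theorem~\ref{GTH2.4} to obtain $J_t[M]\ge0$ and $R_t[M,f]\ge0$ as limits of nonnegative distributions. It extends $\rho_1$ continuously to $\mathbb R^n$ and applies Lemma~\ref{L2.1} for the endpoint constraint. It combines weak lower semicontinuity of $\int\varphi|\nabla M|^n$ with distributional convergence of $\det\nabla M_k$ for the distortion bound, and uses interior cutoffs for the Jacobian mass bound. For the source term, the paper factors the finite expansion directly into $\int_0^t f_k(s)s^j(1-s)^{n-j}\,ds$ times $\int_\Omega\sigma_j(\nabla M_k)\varphi\,dx$, rather than using uniform convergence of your $P_j$; the two arguments are equivalent.

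The one genuinely different step is the range constraint. The paper shows that $\{M\in L^\infty(\Omega;\mathbb R^n):M(x)\in\overline\Omega\text{ a.e.}\}$ is weak-$^*$ closed. It does this by writing the convex set $\overline\Omega$ as a countable intersection of half-spaces and testing against $\xi_{j_0}\chi_{E_0}$. You instead use the compact embedding $W^{1,n}(\Omega)\hookrightarrow L^n(\Omega)$ to get a.e. convergence along a subsequence, and then need only that $\overline\Omega$ is closed.

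Your argument is shorter, and it shows that the convexity of $\Omega$ assumed in \textup{(\ref{hyp:bounded-data})} plays no role in Lemma~\ref{L3.1}, since that step is the only place the paper's proof uses it. The paper's argument, in exchange, relies only on weak-$^*$ convergence in $L^\infty$ and not on Sobolev compactness, at the price of that convexity assumption.
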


\begin{proof}
Let $(M_k,f_k)\in\mathcal V^{\rho_1}$ satisfy $
(M_k,f_k)\xrightarrow{\tau}(M,f)$. We first verify that $(M,f)\in\mathcal V^{\rho_1}$. Since $\Omega$ is convex, $\overline{\Omega}$ is a closed convex subset
of $\mathbb{R}^n$. We claim that
$$
L^\infty(\Omega;\overline{\Omega})
:=
\left\{
M\in L^\infty(\Omega;\mathbb{R}^n):
M(x)\in\overline{\Omega}\ \text{for a.e. }x\in\Omega
\right\}
$$
is weak-$^*$ closed in $L^\infty(\Omega;\mathbb{R}^n)$. Suppose, by contradiction, that
$E:=\{x\in\Omega:M(x)\notin\overline{\Omega}\}$
has positive measure. Since $\overline{\Omega}$ is a closed convex subset of $\mathbb{R}^n$, it can be represented as an intersection of countably
many closed half-spaces,
$$
\overline{\Omega}
=
\bigcap_{j=1}^\infty
\left\{
y\in\mathbb{R}^n:
\xi_j\cdot y\le c_j
\right\}
$$
for some $\xi_j\in\mathbb{R}^n$ and $c_j\in\mathbb{R}$. Hence
$$E=
\bigcup_{j=1}^\infty
\left\{
x\in\Omega:
\xi_j\cdot M(x)>c_j
\right\}.
$$
Since $|E|>0$, there exist $j_0$ and a measurable set
$E_0
:=
\left\{
x\in\Omega:
\xi_{j_0}\cdot M(x)>c_{j_0}
\right\}
$
with $|E_0|>0$. Set
$h:=\xi_{j_0}\chi_{E_0}\in L^1(\Omega;\mathbb{R}^n)$. By the weak-$^*$ convergence of $M_k$, $
\int_\Omega h\cdot M_k\,dx
\rightarrow
\int_\Omega h\cdot M\,dx$. Since $M_k(x)\in\overline{\Omega}$ for a.e. $x\in\Omega$, we have
$\xi_{j_0}\cdot M_k(x)\le c_{j_0}$ for a.e. $x\in E_0$. Consequently,
$$
\int_\Omega h\cdot M_k\,dx
=
\int_{E_0}\xi_{j_0}\cdot M_k\,dx
\le
c_{j_0}|E_0|
$$
for every $k$. On the other hand,
$$
\int_\Omega h\cdot M\,dx
=
\int_{E_0}\xi_{j_0}\cdot M\,dx
>
c_{j_0}|E_0|,
$$
which contradicts the weak-$^*$ convergence. Therefore
$M(x)\in\overline{\Omega}$ for a.e. $x\in\Omega$. Thus $M\in L^\infty(\Omega;\overline{\Omega}),
$ and $
M\in X(\Omega;\overline{\Omega})$. 
For $0\le j\le n$, let $\sigma_j(\nabla M_k)$
denote the sum of all principal $j\times j$ minors,
with $\sigma_0\equiv1$.
Then
$$
\det\bigl(s\nabla M_k+(1-s)I\bigr)
=
\sum_{j=0}^n
s^j(1-s)^{n-j}\sigma_j(\nabla M_k).
$$
Fix $t\in[0,1]$ and $\varphi\in C_c^\infty(\Omega)$.
By Fubini's theorem and the finite expansion above,
$$
\int_0^t f_k(s)
\int_\Omega
\det\bigl(s\nabla M_k+(1-s)I\bigr)
\varphi(x)\,dx\,ds=
\sum_{j=0}^n
\left(
\int_0^t f_k(s)s^j(1-s)^{n-j}\,ds
\right)
\left(
\int_\Omega\sigma_j(\nabla M_k)\varphi\,dx
\right).
$$
For every fixed $t\in[0,1]$ and $0\le j\le n$, $s^j(1-s)^{n-j}\in L^2(0,1)$.
Hence the weak convergence of $f_k$ gives
$$
\int_0^t f_k(s)s^j(1-s)^{n-j}\,ds
\rightarrow
\int_0^t f(s)s^j(1-s)^{n-j}\,ds.
$$
For $2\le j\le n$, the distributional continuity
of minors (Theorem \ref{GTH2.4}) yields
$
\sigma_j(\nabla M_k)
\rightharpoonup
\sigma_j(\nabla M)$ in $\mathcal D'(\Omega)$. For $j=1$, this follows directly from the weak
convergence of the gradients, while the term
$j=0$ is independent of $k$.
Consequently,
\begin{equation}\label{ex3.2}
\begin{aligned}
&\int_0^t f_k(s)
\int_\Omega
\det\bigl(s\nabla M_k+(1-s)I\bigr)\varphi(x)\,dx\,ds\\
&\qquad\longrightarrow
\int_0^t f(s)
\int_\Omega
\det\bigl(s\nabla M+(1-s)I\bigr)\varphi(x)\,dx\,ds.
\end{aligned}
\end{equation}
Hence, for every fixed $t\in[0,1]$,
$
R_t[M_k,f_k]\rightharpoonup R_t[M,f]$ in $\mathcal D'(\Omega)$.
For every nonnegative $\varphi\in C_c^\infty(\Omega)$,
we have
$$
\int_\Omega R_t[M,f]\varphi\,dx
=
\lim_{k\to\infty}
\int_\Omega R_t[M_k,f_k]\varphi\,dx
\ge0.
$$
Since $R_t[M,f]\in L^1(\Omega)$, it follows that $
R_t[M,f]\ge0$ a.e. in $\Omega$
for every fixed $t\in[0,1]$. It remains to verify that $\det\bigl(t\nabla M(x)+(1-t)I\bigr)\geq0$ a.e. in $\Omega$. For each fixed
$t\in[0,1]$, define
$$
g_k^t(x)
:=
\det\bigl(t\nabla M_k(x)+(1-t)I\bigr).
$$
By the definition of $\mathcal{V}^{\rho_1}$, $
g_k^t\geq0\text{ a.e. in }\Omega
$. Furthermore, by Theorem \ref{GTH2.4},
$$
g_k^t
\rightharpoonup
g^t
:=
\det\bigl(t\nabla M+(1-t)I\bigr)
\quad\text{in }\mathcal{D}'(\Omega).
$$
Therefore, for every nonnegative
$\varphi\in \mathcal{D}(\Omega)$,
$$
\int_\Omega g^t\varphi\,dx=
\lim_{k\to\infty}
\int_\Omega g_k^t\varphi\,dx
\geq 0.
$$
Thus $g^t$ is a positive distribution. Since
$g^t\in L^1_{\mathrm{loc}}(\Omega)$, we conclude that, for every fixed $t\in[0,1]$
$
\det\bigl(t\nabla M(x)+(1-t)I\bigr)\geq0$ for a.e. $x\in\Omega$
. It remains to prove that
$\rho_1(M_k)\det\nabla M_k
\rightharpoonup
\rho_1(M)\det\nabla M$ in $\mathcal D'(\Omega)$. Since $\rho_1\in C(\overline\Omega)$ and
$\overline\Omega$ is closed, choose
$h\in C(\mathbb R^n)$ such that $
h=\rho_1$ on $\overline\Omega$. The weighted-Jacobian continuity Lemma \ref{L2.1} gives
$h(M_k)\det\nabla M_k
\rightharpoonup
h(M)\det\nabla M$ in $\mathcal D'(\Omega)$. Since $M_k$ and $M$ take values in $\overline\Omega$
almost everywhere, this is precisely
$$
\rho_1(M_k)\det\nabla M_k
\rightharpoonup
\rho_1(M)\det\nabla M
\quad\text{in }\mathcal D'(\Omega).
$$
Taking $t=1$ in the source-term convergence \eqref{ex3.2}
established above and passing to the limit in
the terminal constraint, we obtain
$$
\int_\Omega\rho_1(M)\det\nabla M\,\varphi\,dx=
\int_\Omega\rho_0\varphi\,dx+
\int_0^1 f(s)
\int_\Omega
\det\bigl(s\nabla M+(1-s)I\bigr)
\varphi\,dx\,ds
$$
for every $\varphi\in C_c^\infty(\Omega)$.
These arguments prove that
$(M,f)\in\mathcal V^{\rho_1}$ and hence establish
the sequential closedness of $\mathcal V^{\rho_1}$.

Suppose now that
$(M_k,f_k)\in\mathcal V_K^{\rho_1}$.
It remains to verify the bounded-distortion inequality
and the Jacobian mass bound for the limit. First, from $
|\nabla M_k|^n\le K\det\nabla M_k
$ we obtain, for every nonnegative $\varphi\in C_c^\infty(\Omega)$,
$$
\int_\Omega \varphi |\nabla M_k|^n\,dx
\le
K\int_\Omega \varphi\det\nabla M_k\,dx.
$$
By the weak lower semicontinuity of the convex functional
$M\mapsto\int_\Omega\varphi|\nabla M|^n\,dx$, and the distributional convergence
$\det\nabla M_k\rightharpoonup\det\nabla M$ in $\mathcal D'(\Omega)$,
we obtain
$$
\int_\Omega \varphi |\nabla M|^n\,dx
\le
K\int_\Omega \varphi\det\nabla M\,dx.
$$
Since the above inequality holds for every nonnegative
$\varphi\in C_c^\infty(\Omega)$, $
|\nabla M|^n\le K\det\nabla M$ a.e. in $\Omega$. Finally, we verify the Jacobian mass bound. Choose $\chi_m\in C_c^\infty(\Omega)$ such that
$0\le\chi_m\le1$, $\chi_m(x)\rightarrow1$ for every $x\in\Omega$. For each fixed $m$, the distributional convergence
of the determinants gives
$$
\int_\Omega\chi_m\det\nabla M\,dx
=
\lim_{k\to\infty}
\int_\Omega\chi_m\det\nabla M_k\,dx.
$$
Since $\det\nabla M_k\ge0$ and $0\le\chi_m\le1$,
we obtain
$$
\int_\Omega\chi_m\det\nabla M\,dx
\le
\liminf_{k\to\infty}
\int_\Omega\det\nabla M_k\,dx
\le|\Omega|.
$$
Letting $m\to\infty$ and applying the dominated
convergence theorem, we conclude that
$$
\int_\Omega\det\nabla M\,dx
\le
\liminf_{k\to\infty}
\int_\Omega\det\nabla M_k\,dx
\le|\Omega|.
$$
Thus $(M,f)\in\mathcal V_K^{\rho_1}$, which proves
the sequential closedness of this class as well.
\end{proof}
\subsection{Lower semicontinuity}
In this section, we establish the weak lower semicontinuity of
the functional $E_\alpha$ on the admissible class $\mathcal V^{\rho_1}$.
The main difficulty arises from the determinant term in the energy,
which does not fit directly into the standard weak lower semicontinuity
theory for quasiconvex functionals \cite{B3}. We first establish the distributional continuity
of determinant-weighted expressions.
This provides convergence of the localized
transport terms.
The nonnegativity of the intermediate states,
together with an interior cutoff argument,
then yields lower semicontinuity over the
whole domain.

The following lemma is the main ingredient in the proof of the weak
lower semicontinuity of $E_\alpha$.
\begin{lemma}\label{L3.2}
Let $n\ge2$ and let $\Omega\subset\mathbb R^n$
be a bounded open set with Lipschitz boundary.
Suppose that
$u_k,u\in W^{1,n}(\Omega;\mathbb R^n)
\cap L^\infty(\Omega;\mathbb R^n)$ satisfy
$$
u_k\rightharpoonup u
\quad\text{in }W^{1,n}(\Omega;\mathbb R^n),
\qquad
u_k\rightharpoonup^*u
\quad\text{in }L^\infty(\Omega;\mathbb R^n).
$$
Then, for every fixed $s\in[0,1]$,
$$
|u_k|^2\det(I+s\nabla u_k)
\rightharpoonup
|u|^2\det(I+s\nabla u)
\quad\text{in }\mathcal D'(\Omega).
$$
\end{lemma}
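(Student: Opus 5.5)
The plan is to write $\det(I+s\nabla u_k)$ as a weighted Jacobian of an auxiliary map, so that Lemma~\ref{L2.1} together with its composition identity can be applied. For $s\in(0,1]$ set $w_k:=\frac{1}{s}\mathrm{Id}+u_k$ and $w:=\frac1s\mathrm{Id}+u$. Then $\nabla w_k=\frac1s(I+s\nabla u_k)$, and hence
\[
\det(I+s\nabla u_k)=s^n\det\nabla w_k .
\]
Since $\mathrm{Id}$ is bounded on the bounded set $\Omega$ and belongs to $W^{1,n}$, the convergences $w_k\rightharpoonup w$ in $W^{1,n}(\Omega;\mathbb R^n)$ and $w_k\rightharpoonup^* w$ in $L^\infty(\Omega;\mathbb R^n)$ hold exactly as for $u_k$. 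The difficulty is the weight $|u_k|^2=|w_k-\frac1s x|^2$. It depends on $x$ as well as on $w_k$, so it is not of the form $h(w_k)$ that Lemma~\ref{L2.1} requires.

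To remove the explicit $x$-dependence I would expand
\[
|u_k|^2=|w_k|^2-\tfrac{2}{s}\,x\cdot w_k+\tfrac{1}{s^2}|x|^2 .
\]
This gives
\[
|u_k|^2\det(I+s\nabla u_k)=s^n\Bigl(|w_k|^2\det\nabla w_k-\tfrac2s\sum_i x_i\, w_k^i\det\nabla w_k+\tfrac1{s^2}|x|^2\det\nabla w_k\Bigr).
\]
Each of the three terms has the form $\psi(x)\,h(w_k)\det\nabla w_k$, with $\psi$ smooth on $\mathbb R^n$ and $h$ continuous, namely $h(y)=|y|^2$, $h(y)=y_i$, or $h\equiv1$. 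Testing against $\varphi\in\mathcal D(\Omega)$, the product $\psi\varphi$ is again in $\mathcal D(\Omega)$. Lemma~\ref{L2.1}, applied with $v_k=w_k$, then gives convergence of each term separately, and summing yields the claim for $s\in(0,1]$.

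The case $s=0$ reduces to $|u_k|^2\rightharpoonup|u|^2$ in $\mathcal D'(\Omega)$. This follows from the strong $L^n$ convergence given by Rellich's theorem together with the uniform $L^\infty$ bound, since $|u_k|^2-|u|^2=(u_k-u)\cdot(u_k+u)$, whose $L^1$ norm is at most $2R\|u_k-u\|_{L^1}\to0$.

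The main obstacle is the $x$-dependent weight, and the expansion above is meant to handle it. A fallback, if one prefers not to expand, is to use the map $(x,y)\mapsto$ weight and rerun the $\Psi_h$ construction of Lemma~\ref{L2.1} with $x$ as a parameter. The expansion is cleaner and needs only the lemma as stated.
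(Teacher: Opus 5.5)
Your proposal is correct and follows essentially the paper's argument. The paper sets $v_k:=x+su_k$, which is just $s\,w_k$ in your notation. It then expands $|u_k|^2=s^{-2}|v_k-x|^2$ and applies Lemma~\ref{L2.1} with $h=1$, $h=y_i$ and $h=|y|^2$, absorbing the factors $x_i$ and $|x|^2$ into the test functions. It treats $s=0$ via the compact embedding, exactly as you do.
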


\begin{proof}
By compactness of the embedding
$W^{1,n}(\Omega)\hookrightarrow L^n(\Omega)$, $
u_k\to u$ strongly in $L^n(\Omega;\mathbb R^n)$. Since $n\ge2$ and $\Omega$ is bounded, this also
gives strong convergence in $L^2$.
Thus the conclusion holds for $s=0$. Fix $s>0$ and set
$$
v_k(x):=x+s u_k(x),
\qquad
v(x):=x+s u(x),
$$
and
$$
g_k:=\det\nabla v_k=\det(I+s\nabla u_k),
\qquad
g:=\det\nabla v=\det(I+s\nabla u).
$$
Then
$$
v_k\rightharpoonup v
\quad\text{in }W^{1,n}(\Omega;\mathbb R^n),
\qquad
v_k\rightharpoonup^*v
\quad\text{in }L^\infty(\Omega;\mathbb R^n).
$$
Applying Lemma~\ref{L2.1}
with
$$
h(y)=1,\qquad h(y)=y_i\ (1\le i\le n),
\qquad h(y)=|y|^2,
$$
we obtain
$$
g_k\rightharpoonup g,
\qquad
v_k^i g_k\rightharpoonup v^i g,
\qquad
|v_k|^2g_k\rightharpoonup |v|^2g
\quad\text{in }\mathcal D'(\Omega).
$$
For every $\varphi\in C_c^\infty(\Omega)$,
the identity $u_k=(v_k-x)/s$ gives
$$
\int_\Omega\varphi|u_k|^2g_k\,dx
=\frac1{s^2}\Bigg(\int_\Omega\varphi|v_k|^2g_k\,dx-2\sum_{i=1}^n
\int_\Omega(x_i\varphi)v_k^i g_k\,dx+\int_\Omega(|x|^2\varphi)g_k\,dx
\Bigg).
$$
Since $\varphi$, $x_i\varphi$, and
$|x|^2\varphi$ belong to $C_c^\infty(\Omega)$,
we may pass to the limit in each term.
Using $u=(v-x)/s$, we conclude that
$$
\int_\Omega\varphi|u_k|^2g_k\,dx
\rightarrow
\int_\Omega\varphi|u|^2g\,dx.
$$
This proves the assertion.
\end{proof}

\begin{proof}[Proof of Theorem \ref{TH5.1}]
Let $\{(M_k,f_k)\}_{k=1}^{\infty}
\subset\mathcal V^{\rho_1}$ satisfy
$$
M_k\rightharpoonup M
\quad\text{in }W^{1,n}(\Omega;\mathbb R^n),
\qquad
M_k\rightharpoonup^*M
\quad\text{in }L^\infty(\Omega;\mathbb R^n),
$$
and
$$
f_k\rightharpoonup f
\quad\text{in }L^2(0,1).
$$
By Lemma~\ref{L3.1}, we have
$(M,f)\in\mathcal V^{\rho_1}$.
We prove that
$$
E_\alpha(M,f)
\le
\liminf_{k\to\infty}E_\alpha(M_k,f_k).$$
If the lower limit is infinite, there is nothing
to prove. Otherwise, passing to a subsequence
and relabeling, we may assume that
$E_\alpha(M_k,f_k)
\rightarrow
\liminf_{j\to\infty}E_\alpha(M_j,f_j)<\infty$. Set
$u_k:=M_k-x$ and $u:=M-x$. Then
$u_k\rightharpoonup u$ in $W^{1,n}(\Omega;\mathbb R^n)$ and $
u_k\rightharpoonup^*u$ in $L^\infty(\Omega;\mathbb R^n)$. By compactness of the embedding
$W^{1,n}(\Omega)\hookrightarrow L^n(\Omega)$,
and after passing to a further subsequence, $u_k\to u$ strongly in $L^n(\Omega;\mathbb R^n)$. The prescribed bounded range also gives
$$
\sup_k\|u_k\|_{L^\infty(\Omega)}
+\|u\|_{L^\infty(\Omega)}
\le C_2
$$
for some constant $C_2>0$. 
Since $R_t[M,f]\ge0$, we have $G(M,f)\ge0$. In the bounded-range setting, the determinant
estimate and $f\in L^2(0,1)$ justify Fubini's
theorem, which gives
$$
G(M,f)
=
\int_\Omega |u|^2\rho_0\,dx+
\int_0^1(1-s)f(s)
\int_\Omega
|u|^2\det(I+s\nabla u)\,dx\,ds.
$$
Consequently,
$$
E_\alpha(M,f)
=
G(M,f)+\frac1\alpha\|f\|_{L^2(0,1)}^2.$$
Choose $\chi_m\in C_c^\infty(\Omega)$ such that
$
0\le\chi_m\le\chi_{m+1}\le1$ and $
\chi_m(x)\to1$ for every $x\in\Omega$.
For each $m\in\mathbb N$, define
$$
G_m(M,f):=
\int_0^1\int_\Omega
\chi_m|u|^2R_t[M,f]\,dx\,dt.
$$
We first show that, for every fixed $m$, $G_m(M_k,f_k)\rightarrow G_m(M,f)$. For $s\in[0,1]$, set
$$
\Phi_{k,m}(s):=
\int_\Omega
\chi_m|u_k|^2\det(I+s\nabla u_k)\,dx
$$
and
$$
\Phi_m(s):=
\int_\Omega
\chi_m|u|^2\det(I+s\nabla u)\,dx.
$$
For every fixed $s\in[0,1]$, Lemma~\ref{L3.2} gives
$
|u_k|^2\det(I+s\nabla u_k)
\rightharpoonup
|u|^2\det(I+s\nabla u)$ in $\mathcal D'(\Omega)$. Testing this convergence against $\chi_m$, we obtain that
$\Phi_{k,m}(s)\rightarrow\Phi_m(s)$ for every $s\in[0,1]$. We next establish uniform convergence in $s$.
The weak convergence implies that
$\sup_k\|\nabla u_k\|_{L^n(\Omega)}<\infty$. Using
$
|\det(I+sA)|\le C_n(1+|A|^n)$ for $s\in[0,1]$ and $A\in\mathbb{R}^{n\times n}$, we obtain
$$
\sup_k\sup_{s\in[0,1]}
\int_\Omega
|\det(I+s\nabla u_k)|\,dx
\le C_3.
$$
Since $0\le\chi_m\le1$ and the interpolation
Jacobians are nonnegative, we obtain that $
0\le\Phi_{k,m}(s)\le C_2^2C_3$. Moreover, the polynomial structure of the
determinant gives
$|\det(I+sA)-\det(I+rA)|
\le C_n|s-r|(1+|A|^n)$. Therefore,
$$
\begin{aligned}
|\Phi_{k,m}(s)-\Phi_{k,m}(r)|
&\le
C_2^2C_n|s-r|
\int_\Omega(1+|\nabla u_k|^n)\,dx\\
&\le C_5|s-r|,
\end{aligned}
$$
where $C_5$ is independent of $k$ and $m$.
Thus, for fixed $m$, the sequence
$\{\Phi_{k,m}\}_k$ is uniformly bounded and
equicontinuous on $[0,1]$. By the Arzel\`a-Ascoli theorem and the pointwise
convergence established above,
$\Phi_{k,m}\rightarrow\Phi_m$ uniformly on $[0,1]$. Since $\{f_k\}$ is bounded in $L^2(0,1)$,
$$
\begin{aligned}
&\left|
\int_0^1(1-s)f_k(s)
\bigl(\Phi_{k,m}(s)-\Phi_m(s)\bigr)\,ds
\right|\\
&\qquad\le
\|f_k\|_{L^2(0,1)}
\|\Phi_{k,m}-\Phi_m\|_{L^2(0,1)}
\rightarrow0.
\end{aligned}
$$
Also, weak convergence of $f_k$ gives
$$
\int_0^1(1-s)f_k(s)\Phi_m(s)\,ds
\rightarrow
\int_0^1(1-s)f(s)\Phi_m(s)\,ds.
$$
Combining these two convergences, we obtain
$$
\int_0^1(1-s)f_k(s)\Phi_{k,m}(s)\,ds
\rightarrow
\int_0^1(1-s)f(s)\Phi_m(s)\,ds.
$$
Furthermore, $u_k\to u$ almost everywhere and
$0\le\chi_m|u_k|^2\rho_0\le C_2^2\rho_0
\in L^1(\Omega)$. The dominated convergence theorem therefore yields
$$
\int_\Omega\chi_m|u_k|^2\rho_0\,dx
\rightarrow
\int_\Omega\chi_m|u|^2\rho_0\,dx.
$$
By Fubini's theorem,
$$
G_m(M_k,f_k)
=
\int_\Omega\chi_m|u_k|^2\rho_0\,dx
+
\int_0^1(1-s)f_k(s)\Phi_{k,m}(s)\,ds.
$$
Hence $G_m(M_k,f_k)\rightarrow G_m(M,f)$. We now remove the cutoff.
Since $R_t[M_k,f_k]\ge0$ and $0\le\chi_m\le1$, $
G(M_k,f_k)\ge G_m(M_k,f_k)$.
For every fixed $m$, it follows that
$$
\liminf_{k\to\infty}G(M_k,f_k)
\ge G_m(M,f).
$$
Since $R_t[M,f]\ge0$ and $\chi_m\rightarrow1$,
the monotone convergence theorem gives
$G_m(M,f)\rightarrow G(M,f)$. Consequently,
$$
G(M,f)
\le
\liminf_{k\to\infty}G(M_k,f_k).
$$
Finally, weak lower semicontinuity of the
$L^2$ norm yields
$$
\frac1\alpha\|f\|_{L^2(0,1)}^2
\le
\liminf_{k\to\infty}
\frac1\alpha\|f_k\|_{L^2(0,1)}^2.
$$
Combining the last two inequalities and using
the nonnegativity of both terms, we conclude that
$$
E_\alpha(M,f)
\le
\liminf_{k\to\infty}E_\alpha(M_k,f_k).
$$
Thus $E_\alpha$ is sequentially weakly lower
semicontinuous on $\mathcal V^{\rho_1}$.
\end{proof}

\vspace{3mm}

\subsection{Existence of minimizers}
We now prove the existence of minimizers in $\mathcal V_K^{\rho_1}$.
The bounded-distortion condition yields the coercive estimate
$$
\|\nabla M\|_{L^n(\Omega)}^n
\le
K\int_\Omega\det\nabla M\,dx
\le K|\Omega|
$$
for every $(M,f)\in\mathcal V_K^{\rho_1}$.

\begin{proof}[Proof of Theorem \ref{GTH5.2}] 
Let
$$
m:=
\inf_{(M,f)\in\mathcal V_K^{\rho_1}}E_\alpha(M,f).
$$
Since $\mathcal V_K^{\rho_1}\ne\varnothing$,
choose $(M_0,f_0)\in\mathcal V_K^{\rho_1}$.
The bounded range of $M_0$, the determinant
estimate, and $f_0\in L^2(0,1)$ ensure that $
E_\alpha(M_0,f_0)<\infty$. Moreover, the nonnegativity of $G(M,f)$ gives
$$
E_\alpha(M,f)
=
G(M,f)+\frac1\alpha\|f\|_{L^2(0,1)}^2
\ge0
$$
for every $(M,f)\in\mathcal V_K^{\rho_1}$.
Thus $
0\le m\le E_\alpha(M_0,f_0)<\infty$.
Choose a minimizing sequence
$(M_k,f_k)\in\mathcal V_K^{\rho_1}$ such that $
E_\alpha(M_k,f_k)\to m$. We may assume that
$E_\alpha(M_k,f_k)\le E_\alpha(M_0,f_0)$ for every $k$. Since $G(M_k,f_k)\ge0$,
$$
\frac1\alpha\|f_k\|_{L^2(0,1)}^2
\le E_\alpha(M_k,f_k)
\le E_\alpha(M_0,f_0),
$$
$\{f_k\}$ is bounded in $L^2(0,1)$. The bounded-distortion condition and the
Jacobian mass bound imply
$$
\|\nabla M_k\|_{L^n(\Omega)}^n
\le
K\int_\Omega\det\nabla M_k\,dx
\le K|\Omega|.
$$
Since $M_k(x)\in\overline\Omega$ almost everywhere
and $\Omega$ is bounded, we also have
$\sup_k\|M_k\|_{L^\infty(\Omega)}<\infty$. Consequently, $\{M_k\}$ is bounded in both
$W^{1,n}(\Omega;\mathbb R^n)$ and
$L^\infty(\Omega;\mathbb R^n)$. By weak compactness and weak-$^*$ sequential
compactness, after passing to a subsequence,
$$
M_k\rightharpoonup M
\quad\text{in }W^{1,n}(\Omega;\mathbb R^n),
$$
$$
M_k\rightharpoonup^*M
\quad\text{in }L^\infty(\Omega;\mathbb R^n),
\qquad
f_k\rightharpoonup f
\quad\text{in }L^2(0,1).
$$
By Lemma~\ref{L3.1}, $(M,f)\in\mathcal V_K^{\rho_1}$.
The lower semicontinuity established in Theorem~\ref{TH5.1} gives
$$
E_\alpha(M,f)
\le
\liminf_{k\to\infty}E_\alpha(M_k,f_k)
=m.
$$
Since $(M,f)$ is admissible, the definition of
$m$ gives the reverse inequality.
Thus $E_\alpha(M,f)=m$, which proves the existence of a minimizer.
\end{proof}

\section{Existence of Minimizers for Whole-Space Targets}\label{sec4}

We extend the preceding results to nonnegative target
densities $\rho_1\in C(\mathbb R^n)\cap L^1(\mathbb R^n)$,
allowing the admissible maps to take values in
$\mathbb R^n$.
Throughout this section, $\Omega$ is a bounded connected
Lipschitz domain,
$
0\le\rho_0\in L^1(\Omega)$ and $
0<\int_\Omega\rho_0\,dx$.
The prescribed bounded range is replaced by local
compactness under a fixed bounded-distortion condition.
We also replace the Jacobian bound $|\Omega|$ by
an independent constant $\Lambda>0$.
Together, these constraints give
\begin{equation}\label{e5.1}
\|\nabla M\|_{L^n(\Omega)}^n
\le K\int_\Omega\det\nabla M\,dx
\le K\Lambda.
\end{equation}

\begin{lemma}\label{L5.2}
The set $\mathcal V_{K,\Lambda}^{\rho_1}$ is sequentially closed with respect
to the convergence
$$
M_k\rightharpoonup M
\quad\text{in }W^{1,n}(\Omega;\mathbb R^n)\quad\text
{and}\quad
f_k\rightharpoonup f
\quad\text{in }L^2(0,1).
$$
Moreover, if $(M_k,f_k)\in\mathcal V_{K,\Lambda}^{\rho_1}$ converges
weakly to $(M,f)$, after passing to a
subsequence,
$$
M_k\to M
\quad\text{locally uniformly in }\Omega.
$$
\end{lemma}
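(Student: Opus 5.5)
The plan is to prove the local uniform convergence first, since it is the new ingredient, and then run the closedness argument of Lemma~\ref{L3.1} locally.

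\textbf{Step 1: local uniform convergence.} Let $(M_k,f_k)\in\mathcal V_{K,\Lambda}^{\rho_1}$ converge weakly. By \eqref{e5.1}, $\|\nabla M_k\|_{L^n}^n\le K\Lambda$, and each $M_k$ is a map of finite distortion with $|\nabla M_k|^n\le K\det\nabla M_k$.

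For such maps, the standard theory gives a continuous representative. Fix a ball $B_{2r}(x_0)\Subset\Omega$. Maps of bounded distortion in $W^{1,n}$ satisfy a Morrey-type decay
\[
\int_{B_\rho(x_0)}|\nabla M_k|^n\,dx\le C(\rho/r)^{\delta}\int_{B_{r}(x_0)}|\nabla M_k|^n\,dx,
\qquad \delta=\delta(n,K)>0.
\]
I would prove this by testing $\det\nabla M_k$ against a cutoff. Use the divergence structure $\det\nabla M=\operatorname{div}\bigl((M^1-c)\operatorname{cof}\ldots\bigr)$ together with the isoperimetric/Sobolev inequality on spheres, which gives $\int_{B_\rho}\det\nabla M\le C\rho\int_{\partial B_\rho}|\nabla M|^n$. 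Combining this with $|\nabla M|^n\le K\det\nabla M$ yields the differential inequality $\Phi(\rho)\le CK\rho\Phi'(\rho)$, whose integration gives the decay. By Morrey's Dirichlet-growth lemma, this yields a uniform $C^{0,\delta/n}$ bound on compact subsets, with constants depending only on $n,K,\Lambda$ and the distance to $\partial\Omega$. This is the step I expect to be the main obstacle: one must handle the oscillation on spheres for a.e.\ radius, and continuity of the representative.

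To pass from equicontinuity to convergence, I also need local boundedness. Weak convergence bounds $M_k$ in $L^n$, and equicontinuity on $\overline B_r$ converts that into a sup bound on $\overline B_r$. Arzel\`a--Ascoli and a diagonal exhaustion by compact sets then give a subsequence converging locally uniformly. Its limit must be $M$, by the strong $L^n$ convergence.

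\textbf{Step 2: closedness.} I would repeat Lemma~\ref{L3.1} with $\Omega$ replaced by subdomains $U\Subset\Omega$.

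On each such $U$, Step~1 gives that $M_k$ is uniformly bounded, so $M_k\rightharpoonup^* M$ in $L^\infty(U)$. Lemma~\ref{L2.1}, applied on $U$ with $h=\rho_1\in C(\mathbb R^n)$, then yields $\rho_1(M_k)\det\nabla M_k\rightharpoonup\rho_1(M)\det\nabla M$ in $\mathcal D'(U)$. Since $U$ is arbitrary, this holds in $\mathcal D'(\Omega)$, and local boundedness of $M$ shows the product lies in $L^1_{\rm loc}$.

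The remaining conditions carry over verbatim from the proof of Lemma~\ref{L3.1}, since none of those arguments used the bounded range:
\begin{itemize}
\item the source term, via the polynomial expansion in $\sigma_j$ and Theorem~\ref{GTH2.4}, which gives \eqref{ex3.2};
\item the signs of $J_t[M]$ and $R_t[M,f]$;
\item the endpoint identity;
\item the distortion inequality, via convexity of $\int\varphi|\nabla M|^n$;
\item the Jacobian mass bound $\int\det\nabla M\le\Lambda$, via cutoffs and $\det\nabla M_k\ge0$.
\end{itemize}
Sequential closedness follows. Finally, applying the closedness to every subsequence shows that no subsequence is needed for membership in $\mathcal V_{K,\Lambda}^{\rho_1}$; the subsequence is needed only for the uniform convergence statement.
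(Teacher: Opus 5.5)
Your proposal is correct and has the same structure as the paper's proof. Uniform local H\"older bounds for maps with $|\nabla M_k|^n\le K\det\nabla M_k$ are combined with the $L^n$ bound to get local sup bounds, and locally uniform convergence then follows from Arzel\`a--Ascoli. The paper simply cites these H\"older estimates from the literature, while you sketch the standard isoperimetric/Morrey-decay argument. The remaining closedness conditions then transfer from Lemma~\ref{L3.1}.

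The only real variation is the weighted Jacobian. You apply Lemma~\ref{L2.1} on interior subdomains, which needs only the local $L^\infty$ bounds. The paper instead splits $\rho_1(M_k)\det\nabla M_k-\rho_1(M)\det\nabla M$ and uses $\rho_1(M_k)\to\rho_1(M)$ uniformly on $\operatorname{supp}\varphi$, together with convergence of the nonnegative, $\Lambda$-bounded Jacobians against $C_c(\Omega)$. Both arguments work.
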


\begin{proof}
The sequence $\{M_k\}$ is bounded in $W^{1,n}$ and
satisfies a fixed bounded-distortion inequality.
The quantitative local H\"older estimates in
\cite[Lemmas~5.1 and~5.2]{OP} control the local
H\"older seminorms by the $L^n$ norm of the gradient.
Combined with the uniform $L^n$ bound for the maps
themselves, these estimates imply that, for every
$U\Subset\Omega$,
$$
\sup_k\|M_k\|_{C^{0,\beta}(\overline U)}<\infty,
$$
where $\beta\in(0,1)$ depends only on the fixed
distortion bound $K$.
Consequently, the Arzel\`a-Ascoli theorem and a
diagonal argument yield a subsequence converging
locally uniformly in $\Omega$. The nonnegativity of
$\det(t\nabla M+(1-t)I)$ and $R_{t}[M,f]$, and the two additional inequalities defining
$\mathcal V_{K,\Lambda}^{\rho_1}$ follow as in
Lemma~\ref{L3.1}, with $|\Omega|$
replaced by $\Lambda$. It only remains to pass to the limit in the weighted Jacobian $\rho_1(M_{k})\det\nabla M_{k}$. Fix $\varphi\in C_c^\infty(\Omega)$.
Local uniform convergence of $M_{k}$ and the continuity of $\rho_1$
imply $
\rho_1(M_k)\to\rho_1(M)$ uniformly on $\operatorname{supp}\varphi$. On the other hand, the distributional convergence of
$\det\nabla M_k$, together with its uniform $L^1$ bound,
implies convergence against every function in
$C_c(\Omega)$, by the weak-$^*$ convergence of measure. Consequently,
$$
\begin{aligned}
\int_\Omega\varphi
&\bigl[\rho_1(M_k)\det\nabla M_k
-\rho_1(M)\det\nabla M\bigr]\,dx\\
&=
\int_\Omega\varphi
\bigl[\rho_1(M_k)-\rho_1(M)\bigr]
\det\nabla M_k\,dx+
\int_\Omega\varphi\rho_1(M)
\bigl[\det\nabla M_k-\det\nabla M\bigr]\,dx
\rightarrow0.
\end{aligned}
$$
Indeed, the first term is controlled by uniform
convergence and the bound $\Lambda$, while the second
uses the continuous test function $\varphi\rho_1(M)\in C_{c}(\Omega)$. The source term passes to the limit in $\mathcal{D}'(\Omega)$ by the same finite expansion in
principal minors as in Lemma~\ref{L3.1}.
Thus the limiting pair satisfies the constraint and
belongs to $\mathcal V_{K,\Lambda}^{\rho_1}$.
\end{proof}

\begin{proof}[Proof of Theorem \ref{T5.3}]
Let $(M_k,f_k)\in\mathcal V_{K,\Lambda}^{\rho_1}$
converge weakly to $(M,f)$ in $Y_1$.
By Lemma~\ref{L5.2},
$(M,f)\in\mathcal V_{K,\Lambda}^{\rho_1}$.
We prove that
$$
E_\alpha(M,f)
\le\liminf_{k\to\infty}E_\alpha(M_k,f_k).
$$
If the lower limit is infinite, there is nothing
to prove. Otherwise, we pass to a subsequence
realizing this finite lower limit.
By Lemma~\ref{L5.2}, after passing to a further
subsequence, we have
$M_k\to M$ locally uniformly in $\Omega$. Set
$u_k:=M_k-x$ and $u:=M-x$. Recall that
$$
E_\alpha(M,f)
=
G(M,f)+\frac1\alpha\|f\|_{L^2(0,1)}^2,
$$
where 
$$
G(M,f)
=
\int_0^1\int_\Omega
|u|^2R_t[M,f]\,dx\,dt
$$
is nonnegative.
Since the source penalty is weakly lower
semicontinuous in $L^2(0,1)$, it remains to
treat $G$. Choose $\chi_m\in C_c^\infty(\Omega)$ such that
$
0\le\chi_m\le\chi_{m+1}\le1$ and 
$\chi_m(x)\to1$ for every $x\in\Omega$, and define
$$
G_m(M,f)
:=
\int_0^1\int_\Omega
\chi_m|u|^2R_t[M,f]\,dx\,dt.
$$
Fix $m$ and choose a bounded Lipschitz open set
$U_m$ such that
$\operatorname{supp}\chi_m\Subset U_m\Subset\Omega$. Then
$
u_k\rightharpoonup u$ in $W^{1,n}(U_m;\mathbb R^n)$, and 
$u_k\to u$ uniformly on $\overline{U_m}$. In particular,
$$
\sup_k\|u_k\|_{L^\infty(U_m)}
+\|u\|_{L^\infty(U_m)}
\le C_m<\infty.
$$
Thus Lemma~\ref{L3.2}, applied on $U_m$, gives
$$
\int_\Omega
\chi_m|u_k|^2\det(I+s\nabla u_k)\,dx
\rightarrow
\int_\Omega
\chi_m|u|^2\det(I+s\nabla u)\,dx
$$
for every $s\in[0,1]$.
As in the proof of Theorem~\ref{TH5.1},
the polynomial dependence on $s$ and the uniform
gradient bounds imply that this convergence
is uniform on $[0,1]$.
Moreover, local uniform convergence yields
$$
\int_\Omega\chi_m|u_k|^2\rho_0\,dx
\rightarrow
\int_\Omega\chi_m|u|^2\rho_0\,dx.
$$
Using Fubini's theorem and
$f_k\rightharpoonup f$ in $L^2(0,1)$,
the same localized argument gives
$
G_m(M_k,f_k)\rightarrow G_m(M,f)$.
Since $R_t[M_k,f_k]\ge0$ and $0\le\chi_m\le1$, we have 
$G(M_k,f_k)\ge G_m(M_k,f_k)$. Thus, for every fixed $m$
$$
\liminf_{k\to\infty}G(M_k,f_k)\ge G_m(M,f).
$$
Letting $m\to\infty$ and using
$R_t[M,f]\ge0$, the monotone convergence theorem
gives
$$
G(M,f)\le\liminf_{k\to\infty}G(M_k,f_k).
$$
Combining this inequality with the weak lower
semicontinuity of the source penalty, we obtain
$$
E_\alpha(M,f)
\le
\liminf_{k\to\infty}G(M_k,f_k)
+\liminf_{k\to\infty}
\frac1\alpha\|f_k\|_{L^2(0,1)}^2\le
\liminf_{k\to\infty}E_\alpha(M_k,f_k).
$$
Since the subsequence was chosen to realize
the original lower limit, this proves the
sequential weak lower semicontinuity of
$E_\alpha$ on $\mathcal V_{K,\Lambda}^{\rho_1}$.

We next establish a coercive estimate on energy
sublevel sets. Let $(M,f)$ be admissible with
$E_\alpha(M,f)<\infty$. Because $\int_\Omega\rho_0\,dx>0$, there exists $c>0$
such that
$D:=\{x\in\Omega:\rho_0(x)\ge c\}$
has positive measure. Choose
$$
Q:=\max\left\{
1,\left(\frac{2K\Lambda}{|D|}\right)^{1/n}
\right\}.
$$
This choice depends only on the fixed data and is
independent of the admissible pair $(M,f)$.
For each admissible map $M$, set
$$
D_M:=D\cap\{|\nabla M|\le Q\}.
$$
Since $D\setminus D_M=D\cap\{|\nabla M|>Q\}$, the gradient bound gives
$$
Q^n|D\setminus D_M|
\le
\int_{D\setminus D_M}|\nabla M|^n\,dx
\le
\int_\Omega|\nabla M|^n\,dx
\le K\Lambda.
$$
By the choice of $Q$, it follows that
$$
|D_M|
=|D|-|D\setminus D_M|
\ge |D|-\frac{K\Lambda}{Q^n}
\ge\frac{|D|}{2}.
$$
On $D_M$, for every $s\in[0,1]$,
$$
0\le \det(s\nabla M+(1-s)I)\le(1+Q)^n=:C_Q.
$$
In particular, $C_Q$ is independent of $M$ and $f$,
although the set $D_M$ depends on $M$.
Consequently,
$R_t[M,f]
\ge c-C_Q\sqrt{t}\,\|f\|_{L^2(0,1)}$ a.e. on $D_M$. Set
$$
\delta:=
\min\left\{
1,\,
\frac{c^2}
{4C_Q^2(1+\|f\|_{L^2(0,1)})^2}
\right\}.
$$
Then, for $0\le t\le\delta$, $
R_t[M,f]\ge\frac c2$ a.e. on $D_M$. Since the $R_t[M,f]$ are nonnegative,
$$
G(M,f)
\ge
\int_0^\delta\int_{D_M}
|M-x|^2R_t[M,f]\,dx\,dt
\ge
\frac{c\delta}{2}
\int_{D_M}|M-x|^2\,dx.
$$
It follows that
$$
\|M\|_{L^2(D_M)}
\le
C\left(
1+(1+\|f\|_{L^2(0,1)})G(M,f)^{1/2}
\right).
$$
Let
$$
\overline M:=\frac1{|\Omega|}\int_\Omega M\,dx.
$$
Since $|D_M|\ge |D|/2$, Poincar\'e's inequality gives
$$
\begin{aligned}
|\overline M|\,|D_M|^{1/2}
&\le
\|M\|_{L^2(D_M)}
+\|M-\overline M\|_{L^2(D_M)}\\
&\le
\|M\|_{L^2(D_M)}
+C\|\nabla M\|_{L^n(\Omega)}.
\end{aligned}
$$
Using \eqref{e5.1} and Poincar\'e's inequality again,
we obtain
\begin{equation}\label{e5.2}
\|M\|_{W^{1,n}(\Omega)}
\le
C\left(
1+(1+\|f\|_{L^2(0,1)})G(M,f)^{1/2}
\right),
\end{equation}
where $C$ depends only on $\Omega$, $n$, $\rho_0$,
$K$, and $\Lambda$. The finite-energy competitor ensures that the infimum is finite. Thus a minimizing sequence
can be chosen with uniformly bounded energy.
Since
$$
G(M_k,f_k)\le E_\alpha(M_k,f_k),
\qquad
\|f_k\|_{L^2(0,1)}^2
\le\alpha E_\alpha(M_k,f_k),
$$
estimate \eqref{e5.2} gives a uniform
$W^{1,n}$ bound for $M_k$.
Weak compactness, the sequential closedness of
the admissible class, and the lower semicontinuity
proved above yield an admissible minimizer.
\end{proof}

\section{Static-Dynamic Equivalence and Source-Penalty Limits}\label{sec5}
This section gives a dynamic interpretation of the
static problem under prescribed affine-flow conditions.
We establish an energy-preserving correspondence and
its stability, which transfers the convergence of
static minimizers to reconstructed dynamic paths.
We then study strong source penalization and identify
the role of constant-source feasibility in determining
the leading optimal cost.

\subsection{Static-dynamic equivalence for affine flows}
\label{sec5.1}

We first prove the static-dynamic correspondence
and establish stability of the reconstruction. Throughout this subsection, let $n\ge2$ and let
$\Omega\subset\mathbb R^n$ be a bounded convex open set with
Lipschitz boundary. Assume that
$0\le\rho_0\in L^1(\Omega)$ and
$0\le\rho_1\in C(\overline\Omega)$.
We retain the admissible class $\mathcal V^{\rho_1}$ and the
topology $\tau$ defined above.  Write
$$
u:=M-\mathrm{Id},\qquad
X_t[M](x):=x+tu(x),\qquad
J_t[M](x):=\det(I+t\nabla u(x)),
$$
and recall that
$$
R_t[M,f](x)
:=\rho_0(x)+\int_0^t f(s)J_s[M](x)\,ds,
$$
$$
E_\alpha(M,f)
=\int_0^1\int_\Omega |u(x)|^2R_t[M,f](x)\,dx\,dt
+\frac1\alpha\int_0^1|f(t)|^2\,dt.
$$

\begin{lemma}\label{L6.1}
The set $\mathcal A_L^{\rho_1}$ is sequentially closed with
respect to $\tau$. For every $(M,f)\in\mathcal A_L^{\rho_1}$
and every $t\in[0,1]$, $X_t[M]$ is an
orientation-preserving bi-Lipschitz homeomorphism of $\Omega$
onto itself. Moreover,
\begin{equation}\label{ex5.2}
L^{-n}\le J_t[M]\le L^n
\qquad\text{for all }t\in[0,1],
\end{equation}
and
\begin{equation}\label{ex5.3}
\int_\Omega J_t[M]\,dx=|\Omega|
\qquad\text{for all }t\in[0,1].
\end{equation}
Using the Frobenius norm for matrices, one also has
\begin{equation}\label{eq:reconstruction-distortion}
|\nabla X_t[M]|^n
\le n^{n/2}L^{2n}J_t[M]
\qquad\text{a.e. in }\Omega.
\end{equation}
In particular,
$$
|\nabla M|^n
\le n^{n/2}L^{2n}\det\nabla M\text{ a.e. in }\Omega\quad\text{and}\quad
\int_\Omega\det\nabla M\,dx=|\Omega|.
$$
\end{lemma}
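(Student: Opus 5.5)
We will establish the pointwise and integral facts first and prove sequential closedness last. Fix $(M,f)\in\mathcal A_L^{\rho_1}$ and $t\in[0,1]$.

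\emph{Homeomorphism and orientation.} By \eqref{ex1.3}, $X_t:=X_t[M]$ is a bi-Lipschitz map of $\overline\Omega$ onto itself, hence a homeomorphism of $\overline\Omega$. By invariance of domain it maps interior points to interior points, so it restricts to a homeomorphism of $\Omega$ onto $\Omega$. Being Lipschitz, $X_t$ is differentiable a.e. by Rademacher. The lower bound in \eqref{ex1.3} gives $|\nabla X_t(x)\xi|\ge L^{-1}|\xi|$ and the upper bound gives $|\nabla X_t(x)\xi|\le L|\xi|$ at differentiability points. Hence every singular value of $\nabla X_t$ lies in $[L^{-1},L]$, so $|\det\nabla X_t|\in[L^{-n},L^n]$ a.e.

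\emph{Sign of the Jacobian.} To fix the sign we use that $J_t[M]\ge0$ a.e. for $(M,f)\in\mathcal V^{\rho_1}$. Combined with $|J_t|\ge L^{-n}>0$, this gives \eqref{ex5.2}. An alternative argument: $t\mapsto J_t(x)$ is a polynomial that equals $1$ at $t=0$ and never vanishes on $[0,1]$, so it stays positive.

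\emph{Mass identity.} The area formula for the injective Lipschitz map $X_t$ of $\Omega$ onto $\Omega$ gives $\int_\Omega J_t\,dx=|X_t(\Omega)|=|\Omega|$, which is \eqref{ex5.3}.

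\emph{Distortion bound.} For \eqref{eq:reconstruction-distortion}, the Frobenius norm satisfies $|A|\le\sqrt n\,\|A\|_{op}\le\sqrt n\,L$, so $|\nabla X_t|^n\le n^{n/2}L^n$. Since $J_t\ge L^{-n}$, we get $n^{n/2}L^n\le n^{n/2}L^{2n}J_t$. Taking $t=1$ yields the final claims for $M=X_1$.

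\emph{Closedness.} Let $(M_k,f_k)\in\mathcal A_L^{\rho_1}$ with $(M_k,f_k)\xrightarrow{\tau}(M,f)$. By Lemma~\ref{L3.1}, $(M,f)\in\mathcal V^{\rho_1}$. The maps $M_k=X_1[M_k]$ are uniformly $L$-Lipschitz with values in the bounded set $\overline\Omega$. By Arzel\`a--Ascoli, a subsequence converges uniformly on $\overline\Omega$. The uniform limit must agree a.e. with the weak limit $M$, so $M$ has a continuous representative in $C(\overline\Omega;\overline\Omega)$ and $M_k\to M$ uniformly. The whole sequence converges, since every subsequence has the same limit. Then $X_t[M_k]\to X_t[M]$ uniformly for each $t$, and both inequalities in \eqref{ex1.3} pass to the limit pointwise.

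\emph{Main obstacle: surjectivity.} The delicate point is that the surjectivity $X_t[M](\overline\Omega)=\overline\Omega$ must survive the limit. Fix $y\in\overline\Omega$ and pick $x_k$ with $X_t[M_k](x_k)=y$. By compactness $x_k\to x$ along a subsequence. Uniform convergence together with equicontinuity then gives $X_t[M](x)=\lim_k X_t[M_k](x_k)=y$. The reverse inclusion $X_t[M](\overline\Omega)\subset\overline\Omega$ follows from convexity, since $X_t[M](x)$ is a convex combination of $x$ and $M(x)$. This proves $(M,f)\in\mathcal A_L^{\rho_1}$.
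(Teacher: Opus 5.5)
Your proof is correct and follows the paper's route: Rademacher plus singular-value bounds give \eqref{ex5.2}, change of variables gives \eqref{ex5.3}, the Frobenius estimate gives the distortion bound, and Arzel\`a--Ascoli together with compactness of the preimages $x_k$ gives closedness and surjectivity. Your primary sign argument, which uses $J_t[M]\ge0$ from membership in $\mathcal V^{\rho_1}$, is valid for this lemma. The paper instead uses the geometric argument you list as the alternative, with a single null set independent of $t$ coming from the differentiability of $M$, because that argument is reused later in Theorem~\ref{TH1.4} for flows not yet known to lie in $\mathcal V^{\rho_1}$.
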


\begin{proof}
Let $(M_j,f_j)\in\mathcal A_L^{\rho_1}$ converge to $(M,f)$
with respect to $\tau$. The sequential closedness of
$\mathcal V^{\rho_1}$ established by Lemma \ref{L3.1} gives
$(M,f)\in\mathcal V^{\rho_1}$.
Taking $t=1$ in \eqref{ex1.3}, we see that
the $M_j$ are uniformly $L$-Lipschitz and take values in the
compact set $\overline\Omega$. By the Arzel\`a-Ascoli theorem,
a subsequence converges uniformly on $\overline\Omega$.
Its uniform limit agrees almost everywhere with the weak
limit $M$, providing a continuous representative of $M$.
Consequently,
$$
X_t[M_j]\rightarrow X_t[M]
\quad\text{uniformly on }[0,1]\times\overline\Omega.
$$
To verify surjectivity, fix $t\in[0,1]$ and
$y\in\overline\Omega$. Choose $x_j\in\overline\Omega$ with
$X_t[M_j](x_j)=y$. After taking a further subsequence,
$x_j\to x\in\overline\Omega$, and uniform convergence yields
$X_t[M](x)=y$. Thus $(M,f)\in\mathcal A_L^{\rho_1}$. For any map satisfying
\eqref{ex1.3},
each $X_t[M]$ is a bi-Lipschitz homeomorphism
of $\overline\Omega$ onto itself.
Such a homeomorphism preserves the interior and
the boundary, so $X_t[M](\Omega)=\Omega$. By Rademacher's theorem, $M$ is differentiable
outside a single null set.
At every such point, the distance bounds imply that,
for every $t\in[0,1]$, all singular values of
$I+t\nabla u$ lie in $[L^{-1},L]$.
Consequently,
$$
L^{-n}\le
\bigl|\det(I+t\nabla u)\bigr|
\le L^n.
$$
The determinant is continuous in $t$, never vanishes,
and equals one at $t=0$.
It therefore remains positive on $[0,1]$. There exists a null set $N\subset\Omega$ such that
$L^{-n}\le J_t[M](x)\le L^n$
for every $x\in\Omega\setminus N$ and every $t\in[0,1]$. This also proves orientation preservation. The change-of-variables
formula then yields
$$
\int_\Omega J_t[M]\,dx
=|X_t[M](\Omega)|
=|\Omega|,
$$
which proves \eqref{ex5.3}. The singular-value upper bound also gives
$|\nabla X_t[M]|\le\sqrt n\,L$. Combining this with
$J_t[M]\ge L^{-n}$ yields
$$
|\nabla X_t[M]|^n
\le n^{n/2}L^n
\le n^{n/2}L^{2n}J_t[M].
$$
Taking $t=1$ proves the assertions concerning $M$.
\end{proof}

Writing $Y_t:=X_t[M]^{-1}$ and
$D:=\operatorname{diam}(\Omega)$, we have
\begin{equation}\label{ex5.5}
|Y_t(y)-Y_s(z)|
\le L|y-z|+LD|t-s|.
\end{equation}
Indeed, the spatial estimate follows from
\eqref{ex1.3}, while
$$
|Y_t(y)-Y_s(y)|
\le L|X_t[M](Y_s(y))-X_s[M](Y_s(y))|
\le LD|t-s|.
$$
In particular, the inverse flow is jointly Lipschitz. Let $\mathfrak D^{\rho_0,\rho_1}$ consist of triples
$(\rho,v,f)$ such that
$$
0\le\rho\in C([0,1];L^1(\Omega)),\qquad
v\in L^\infty((0,1)\times\Omega;\mathbb R^n),\qquad
f\in L^2(0,1),
$$
and
\begin{equation}\label{ex5.6}
\int_\Omega\rho_1(y)\varphi(1,y)\,dy
-\int_\Omega\rho_0(y)\varphi(0,y)\,dy=
\int_0^1\int_\Omega
\left(
\rho\,\partial_t\varphi+
\rho v\cdot\nabla\varphi+
f(t)\varphi
\right)\,dy\,dt
\end{equation}
for every $\varphi\in C^1([0,1]\times\overline\Omega)$.
This identity expresses the continuity equation, the
prescribed endpoint conditions, and the no-flux boundary
condition. Define
$$
\mathscr E_\alpha(\rho,v,f)
:=
\int_0^1\int_\Omega\rho(t,y)|v(t,y)|^2\,dy\,dt
+\frac1\alpha\int_0^1|f(t)|^2\,dt.
$$
The dynamic problem considered here is posed on
$\mathfrak D^{\rho_0,\rho_1}$, with the stated
bounded-velocity and strong $L^1$ continuity requirements.
The correspondence below concerns the subclass whose
velocities generate uniformly bi-Lipschitz affine flows.
Let $\mathfrak D_L^{\rho_0,\rho_1}$ consist of triples in
$\mathfrak D^{\rho_0,\rho_1}$ for which $v$ has a jointly
Lipschitz representative on $[0,1]\times\overline\Omega$
whose flow $X_t$ satisfies
$$
\frac{d}{dt}X_t(x)=v(t,X_t(x)),\qquad X_0(x)=x,
$$
together with
\begin{equation}\label{ex5.7}
X_t(x)=(1-t)x+tX_1(x),
\end{equation}
and
$$
X_t(\overline\Omega)=\overline\Omega,\qquad
L^{-1}|x-y|\le |X_t(x)-X_t(y)|\le L|x-y|
$$
for all $x,y\in\overline\Omega$ and $t\in[0,1]$.

\begin{proof}[Proof of Theorem \ref{TH1.4}]
We first prove existence. Every pair in $\mathcal A_L^{\rho_1}$ has finite energy, since
$|M-\mathrm{Id}|\le D$, $J_t\le L^n$, and
$$
\|R_t[M,f]\|_{L^1(\Omega)}
\le \|\rho_0\|_{L^1(\Omega)}
+L^n|\Omega|\|f\|_{L^1(0,1)}.
$$
Moreover, $R_t[M,f]\ge0$, so
$E_\alpha(M,f)\ge\alpha^{-1}\|f\|_{L^2(0,1)}^2$.
For a minimizing sequence, the sources are therefore
bounded in $L^2(0,1)$, while the maps are uniformly
Lipschitz with values in $\overline\Omega$.
After passing to a subsequence, the pairs converge with
respect to $\tau$, and the maps also converge uniformly.
By Lemma \ref{L6.1}, the limit belongs to $\mathcal A_L^{\rho_1}$.
The lower semicontinuity of $E_\alpha$ on
$\mathcal V^{\rho_1}$, together with the sequential
closedness of $\mathcal A_L^{\rho_1}$, shows that
the limit attains the minimum.
No additional bounded-distortion or Jacobian mass
constraint is required, since
Lemma~\ref{L6.1} shows that the geometric conditions
\eqref{ex1.3} already imply
$$
|\nabla M|^n
\le n^{n/2}L^{2n}\det\nabla M
\quad\text{a.e. in }\Omega\quad\text{and}\quad\int_\Omega\det\nabla M\,dx=|\Omega|.
$$
Fix $(M,f)\in\mathcal A_L^{\rho_1}$ and abbreviate
$X_t=X_t[M]$, $J_t=J_t[M]$, and $R_t=R_t[M,f]$.
Define $\rho$ and $v$ by \eqref{eq:dynamic-reconstruction}.
Since $R_t\ge0$ and $J_t\ge L^{-n}>0$, the
reconstructed density satisfies $\rho\ge0$.
Moreover, $M(x),x\in\overline\Omega$ imply $
|u(x)|=|M(x)-x|\le D$, and hence $|v(t,y)|\le D$. Taking $t=1$ in  \eqref{ex1.3}, we see that
$M$ is $L$-Lipschitz. Therefore,
$$
|u(x)-u(z)|
\le (L+1)|x-z|
\quad
\text{for all }x,z\in\overline\Omega.
$$
Together with \eqref{ex5.5}, this gives
$$
\begin{aligned}
|v(t,y)-v(s,z)|
&=|u(Y_t(y))-u(Y_s(z))|\\
&\le (L+1)|Y_t(y)-Y_s(z)|\\
&\le L(L+1)|y-z|
   +LD(L+1)|t-s|.
\end{aligned}
$$
Thus $v$ is jointly Lipschitz on
$[0,1]\times\overline\Omega$. Finally, the definition $X_t(x)=x+tu(x)$ yields
$$
\partial_tX_t(x)
=u(x)
=u(Y_t(X_t(x)))
=v(t,X_t(x)),
\qquad
X_0(x)=x.
$$
Hence $X_t$ solves the flow equation associated
with $v$. Since $X_1=M$, we have $X_t(x)=(1-t)x+tX_1(x)$,
which is precisely \eqref{ex5.7}. We verify strong continuity of the density.
The polynomial dependence of $J_t$ on $t$, together
with $\nabla M\in L^\infty$, gives
$J_t\in C([0,1];L^\infty(\Omega))$.
By \eqref{ex5.3},
$$
\|R_t-R_s\|_{L^1(\Omega)}
\le |\Omega|
\int_{\min\{s,t\}}^{\max\{s,t\}}|f(r)|\,dr.
$$
Thus $R_t$ is continuous in $L^1$.
Fix $s\in[0,1]$ and choose
$h\in C(\overline\Omega)$ with
$\|\rho(s,\cdot)-h\|_{L^1}<\varepsilon$.
The change-of-variables formula yields
$$
\|\rho(t,\cdot)-h\|_{L^1(\Omega)}
=
\|R_t-h(X_t)J_t\|_{L^1(\Omega)}.
$$
The right-hand side is continuous in $t$.
Hence
$$
\limsup_{t\to s}
\|\rho(t,\cdot)-\rho(s,\cdot)\|_{L^1(\Omega)}
\le 2\|\rho(s,\cdot)-h\|_{L^1(\Omega)}
<2\varepsilon.
$$
This proves $\rho\in C([0,1];L^1(\Omega))$. Let $\varphi\in C^1([0,1]\times\overline\Omega)$ and set
$$
F(t):=\int_\Omega R_t(x)\varphi(t,X_t(x))\,dx.
$$
Since $\partial_tR_t=f(t)J_t$ is integrable and
$\varphi(t,X_t(x))$ has a bounded time derivative,
$F$ is absolutely continuous.
For almost every $t\in(0,1)$,
$$
\begin{aligned}
F'(t)
={}&\int_\Omega f(t)J_t(x)\varphi(t,X_t(x))\,dx\\
&+\int_\Omega R_t(x)
\left[
\partial_t\varphi(t,X_t(x))
+u(x)\cdot\nabla\varphi(t,X_t(x))
\right]\,dx.
\end{aligned}
$$
Integrating this identity over $t\in(0,1)$ gives
$$
\begin{aligned}
F(1)-F(0)
={}&\int_0^1\int_\Omega
f(t)J_t(x)\varphi(t,X_t(x))\,dx\,dt\\
&+\int_0^1\int_\Omega R_t(x)
\left[
\partial_t\varphi(t,X_t(x))
+u(x)\cdot\nabla\varphi(t,X_t(x))
\right]\,dx\,dt.
\end{aligned}
$$
For each $t$, make the change of variables $y=X_t(x)$.
Using $
dy=J_t(x)\,dx$, $R_t(x)=\rho(t,X_t(x))J_t(x)$, $u(x)=v(t,X_t(x))$,
and $X_t(\Omega)=\Omega$, we obtain
$$
F(1)-F(0)
=
\int_0^1\int_\Omega
\left[
\rho\,\partial_t\varphi
+\rho v\cdot\nabla\varphi
+f(t)\varphi
\right]\,dy\,dt.
$$
Finally, $X_0=\mathrm{Id}$ and $R_0=\rho_0$ yield
$$
F(0)=\int_\Omega\rho_0(y)\varphi(0,y)\,dy.
$$
Since $X_1=M$ and
$R_1=\rho_1(M)\det\nabla M$, another change of variables gives
$$
\begin{aligned}
F(1)
&=\int_\Omega
\rho_1(M(x))\det\nabla M(x)\,
\varphi(1,M(x))\,dx\\
&=\int_\Omega\rho_1(y)\varphi(1,y)\,dy.
\end{aligned}
$$
Substituting these endpoint identities proves \eqref{ex5.6}. Thus $(\rho,v,f)\in\mathfrak D_L^{\rho_0,\rho_1}$.

We next prove the converse.
Let $(\rho,v,f)\in\mathfrak D_L^{\rho_0,\rho_1}$ and let
$X_t$ be its flow. Set $M=X_1$.
By \eqref{ex5.7},
$$
X_t=X_t[M],\qquad
v(t,X_t(x))=M(x)-x.
$$
The geometric argument in Lemma~\ref{L6.1} gives
$J_t[M]>0$ a.e. for every $t\in[0,1]$. We justify the density identity directly from the weak
equation. For $\eta\in C_c^1(0,1)$ and
$\zeta\in C^1(\overline\Omega)$, set
$$
\varphi(t,y):=\eta(t)\zeta(Y_t(y)).
$$
Although $\varphi$ is only Lipschitz, it is an
admissible test function by smooth approximation.
Indeed, the approximations converge uniformly,
with uniformly bounded derivatives converging
almost everywhere, so the weak identity passes
to the limit by dominated convergence. The space-time map
$(t,x)\mapsto(t,X_t(x))$ and its inverse are
bi-Lipschitz. Applying the Lipschitz chain rule to
$Y_t(X_t(x))=x$ gives
$$
\partial_tY_t+D_yY_t\,v=0
\quad\text{a.e. in }(0,1)\times\Omega.
$$
Thus
$$
\partial_t\varphi+v\cdot\nabla\varphi=\eta'(t)\zeta(Y_t(y)).
$$
It follows that
$$
\frac{d}{dt}
\int_\Omega\rho(t,y)\zeta(Y_t(y))\,dy
=
f(t)\int_\Omega\zeta(Y_t(y))\,dy
=
f(t)\int_\Omega\zeta(x)J_t(x)\,dx
$$
in distributions in time.
The left-hand integral is continuous in $t$, and the
right-hand side is integrable.
For every $t\in[0,1]$,
$$
\int_\Omega\rho(t,X_t(x))J_t(x)\zeta(x)\,dx
=
\int_\Omega
\left(\rho_0(x)+\int_0^t f(s)J_s(x)\,ds\right)
\zeta(x)\,dx.
$$
Since $\zeta$ is arbitrary, we obtain
$\rho(t,X_t(x))J_t(x)=R_t[M,f](x)$ for a.e. $x$. In particular, $R_t[M,f]\ge0$ for every fixed $t$.
The terminal condition gives
$$
\rho_1(M)\det\nabla M
=
\rho_0+\int_0^1 f(s)J_s[M]\,ds.
$$
Thus $(M,f)\in\mathcal V^{\rho_1}$ and, by the flow
geometry, $(M,f)\in\mathcal A_L^{\rho_1}$. Finally, in either direction the change-of-variables
formula gives
$$
\begin{aligned}
\mathscr E_\alpha(\rho,v,f)
&=\int_0^1\int_\Omega
|M(x)-x|^2R_t[M,f](x)\,dx\,dt
+\frac1\alpha\int_0^1|f(t)|^2\,dt\\
&=E_\alpha(M,f).
\end{aligned}
$$
The two constructions and static existence prove
\eqref{eq:static-dynamic-equality}, attainment of the
dynamic minimum, and the assertions about minimizers.
\end{proof}

\begin{corollary}\label{cor:full-dynamic-equivalence}
Assume that the minimum of $\mathscr E_\alpha$ on
$\mathfrak D^{\rho_0,\rho_1}$ is attained by a triple
belonging to $\mathfrak D_L^{\rho_0,\rho_1}$.
Then
\[
\min_{(\rho,v,f)\in\mathfrak D^{\rho_0,\rho_1}}
\mathscr E_\alpha(\rho,v,f)
=
\min_{(M,f)\in\mathcal A_L^{\rho_1}}E_\alpha(M,f).
\]
Every minimizer on $\mathcal A_L^{\rho_1}$ reconstructs
to a minimizer on $\mathfrak D^{\rho_0,\rho_1}$.
\end{corollary}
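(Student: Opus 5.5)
The plan is a two-inequality comparison combined with Theorem~\ref{TH1.4}. Let $(\rho^*,v^*,f^*)\in\mathfrak D_L^{\rho_0,\rho_1}$ be the assumed minimizer of $\mathscr E_\alpha$ over the full class $\mathfrak D^{\rho_0,\rho_1}$. Since $\mathfrak D_L^{\rho_0,\rho_1}\subset\mathfrak D^{\rho_0,\rho_1}$, we get
\[
\min_{\mathfrak D^{\rho_0,\rho_1}}\mathscr E_\alpha
=\mathscr E_\alpha(\rho^*,v^*,f^*)
\ge\inf_{\mathfrak D_L^{\rho_0,\rho_1}}\mathscr E_\alpha
\ge\min_{\mathfrak D^{\rho_0,\rho_1}}\mathscr E_\alpha .
\]
Hence all three quantities coincide, and the infimum over $\mathfrak D_L^{\rho_0,\rho_1}$ is attained at $(\rho^*,v^*,f^*)$. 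This class is nonempty because it contains that triple. By the converse part of Theorem~\ref{TH1.4}, the triple arises from a static pair $(X_1,f^*)\in\mathcal A_L^{\rho_1}$. Consequently $\mathcal A_L^{\rho_1}\ne\varnothing$, and the hypothesis of Theorem~\ref{TH1.4} holds.

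We then invoke \eqref{eq:static-dynamic-equality}. It says the minimum of $E_\alpha$ over $\mathcal A_L^{\rho_1}$ is attained and equals $\min_{\mathfrak D_L^{\rho_0,\rho_1}}\mathscr E_\alpha$. By the first step this also equals $\min_{\mathfrak D^{\rho_0,\rho_1}}\mathscr E_\alpha$, which is the claimed identity.

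For the statement about minimizers, take any minimizer $(M,f)$ of $E_\alpha$ on $\mathcal A_L^{\rho_1}$. Reconstruct $(\rho,v,f)$ via \eqref{eq:dynamic-reconstruction}. Theorem~\ref{TH1.4} places this triple in $\mathfrak D_L^{\rho_0,\rho_1}\subset\mathfrak D^{\rho_0,\rho_1}$ and gives $\mathscr E_\alpha(\rho,v,f)=E_\alpha(M,f)$. The right-hand side equals the common minimum value, so the triple is admissible in the larger class and attains its minimum, i.e.\ it is a minimizer on $\mathfrak D^{\rho_0,\rho_1}$.

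There is no real obstacle; the only point needing care is the nonemptiness of $\mathcal A_L^{\rho_1}$. This is required to apply Theorem~\ref{TH1.4}, and it is supplied by the converse direction of that theorem applied to the assumed minimizer.
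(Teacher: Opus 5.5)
Your proposal is correct and follows essentially the same route as the paper. The assumed minimizer lies in $\mathfrak D_L^{\rho_0,\rho_1}\subset\mathfrak D^{\rho_0,\rho_1}$, so the two dynamic minima coincide; the converse part of Theorem~\ref{TH1.4} then gives a pair in $\mathcal A_L^{\rho_1}$, and \eqref{eq:static-dynamic-equality} together with the energy-preserving reconstruction yields both the identity and the statement about minimizers. You spell out the nonemptiness step more explicitly than the paper does, and there is no circularity there, because the converse construction in Theorem~\ref{TH1.4} never uses the hypothesis $\mathcal A_L^{\rho_1}\ne\varnothing$.
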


\begin{proof}
The assumed dynamic minimizer yields a pair in
$\mathcal A_L^{\rho_1}$ by
Theorem~\ref{TH1.4}.
Since
$\mathfrak D_L^{\rho_0,\rho_1}\subset
\mathfrak D^{\rho_0,\rho_1}$ and this subclass contains
a minimizer of the larger problem, the two dynamic
minimum values coincide. The conclusion now follows
from \eqref{eq:static-dynamic-equality}.
\end{proof}

\begin{remark}
Gangbo et al.~\cite{bib1} derive the affine interpolation
from the Hamilton-Jacobi optimality equation.
However, their analysis does not establish the
domain-preserving bi-Lipschitz flow properties required
here, nor the prescribed uniform bound $L$. Thus equality between the minimum of $E_\alpha$ on
$\mathcal A_L^{\rho_1}$ and the minimum of
$\mathscr E_\alpha$ on $\mathfrak D^{\rho_0,\rho_1}$
requires the additional hypothesis of
Corollary~\ref{cor:full-dynamic-equivalence}.
\end{remark}

\begin{proposition}[Stability of the reconstruction]
\label{P5.1}
Let $(M_j,f_j),(M,f)\in\mathcal A_L^{\rho_1}$ satisfy
$$
M_j\to M
\quad\text{uniformly on }\overline\Omega,
\qquad
f_j\rightharpoonup f
\quad\text{in }L^2(0,1).
$$
Let $(\rho_j,v_j,f_j)$ and $(\rho,v,f)$ be their
reconstructed dynamic triples.
Then $
v_j\to v$ uniformly on $[0,1]\times\overline\Omega$, and, for every $\psi\in C(\overline\Omega)$,
$$
\sup_{t\in[0,1]}
\left|
\int_\Omega
\psi(y)\bigl(\rho_j(t,y)-\rho(t,y)\bigr)\,dy
\right|
\rightarrow0.
$$
Moreover, $
\rho_j\rightharpoonup^*\rho$ in $L^\infty((0,1)\times\Omega)$ and 
$G(M_j,f_j)\to G(M,f)$.
\end{proposition}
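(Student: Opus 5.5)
Our approach pulls every claim back to Lagrangian coordinates through the flows $X_t^j:=X_t[M_j]$ and $X_t:=X_t[M]$ and their inverses $Y_t^j$, $Y_t$. We then use two facts: the uniform bi-Lipschitz control \eqref{ex1.3} with constant $L$, and the polynomial dependence of $J_t$ on $t$, handled as in Lemma~\ref{L3.1} and Theorem~\ref{TH5.1}.

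\textbf{Velocities.} Write $u_j=M_j-\mathrm{Id}$ and $u=M-\mathrm{Id}$, so $u_j\to u$ uniformly. First we show $Y_t^j\to Y_t$ uniformly in $(t,y)$. For $y\in\overline\Omega$ put $x=Y_t(y)$. By the lower bi-Lipschitz bound,
$$|Y_t^j(y)-Y_t(y)|\le L\,|X_t^j(Y_t^j(y))-X_t^j(Y_t(y))|=L\,|y-X_t^j(x)|=L\,|X_t(x)-X_t^j(x)|\le L\,\|u_j-u\|_\infty .$$
Then
$$v_j(t,y)-v(t,y)=u_j(Y_t^j(y))-u(Y_t(y))=\bigl[u_j-u\bigr](Y^j_t(y))+\bigl[u(Y_t^j(y))-u(Y_t(y))\bigr].$$
Since $u$ is $(L+1)$-Lipschitz, this gives $\|v_j-v\|_\infty\le (1+L(L+1))\|u_j-u\|_\infty\to0$.

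\textbf{Densities.} Change variables $y=X_t^j(x)$ to get
$$\int_\Omega\psi\rho_j(t)\,dy=\int_\Omega\psi(X_t^j)\rho_0\,dx+\int_0^t f_j(s)\int_\Omega \psi(X_t^j(x))J_s[M_j](x)\,dx\,ds.$$
The first term converges uniformly in $t$ because $\psi$ is uniformly continuous and $X_t^j\to X_t$ uniformly. In the second term, $\psi(X_t^j)$ can be replaced by $\psi(X_t)$ with an error at most $\omega_\psi(\|u_j-u\|_\infty)\,L^n|\Omega|\,\|f_j\|_{L^1}$, which tends to zero uniformly in $t$. Here we used $J_s\le L^n$ from \eqref{ex5.2}, and $\|f_j\|_{L^2}$ is bounded by weak convergence.

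We are left with $\Theta_j(t,s):=\int_\Omega\psi(X_t(x))J_s[M_j](x)\,dx$. Expanding $J_s$ into principal minors $\sigma_k(\nabla u_j)$ as in Lemma~\ref{L3.1} and Theorem~\ref{TH5.1}, this is a polynomial in $s$ with coefficients $\int\psi(X_t)\sigma_k(\nabla u_j)$. The $M_j$ are uniformly Lipschitz, so $\nabla u_j$ is bounded in $L^\infty$ and $u_j\rightharpoonup^* u$ in $W^{1,\infty}$. The minors then converge weakly-$^*$ in $L^\infty$: Theorem~\ref{GTH2.4} gives distributional convergence, and boundedness upgrades it. Testing against the $L^1$ functions $\psi(X_t)$ gives convergence for each fixed $t$.

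To make this uniform in $t$, we use equicontinuity in $t$ and compactness of $[0,1]$. The map $t\mapsto\psi(X_t)$ is continuous into $L^1$ (indeed into $C(\overline\Omega)$), and the minors are bounded in $L^\infty$, so the coefficients are equicontinuous in $t$. Hence $\Theta_j\to\Theta$ uniformly on $[0,1]^2$, with $\Theta$ defined using $M$. Finally,
$$\int_0^t f_j\Theta_j(t,\cdot)\,ds-\int_0^t f\,\Theta(t,\cdot)\,ds=\int_0^t f_j(\Theta_j-\Theta)\,ds+\int_0^t (f_j-f)\Theta(t,\cdot)\,ds .$$
The first term is small uniformly in $t$ by the uniform convergence and the bound on $f_j$. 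For the second, the family $\{\mathbf 1_{[0,t]}\Theta(t,\cdot)\}_{t\in[0,1]}$ is relatively compact in $L^2(0,1)$ (continuity in $t$ of the truncation and of $\Theta$), so weak convergence is uniform over it. This gives the $\sup_t$ statement.

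\textbf{Weak-$^*$ convergence and $G$.} The bounds $\rho_j\le (\sup R_t)L^n$ need care, since $\rho_0$ is only $L^1$. We therefore read $\rho_j\rightharpoonup^*\rho$ as follows: $\rho_j$ is bounded in $L^\infty_tL^1_y$, and its pairings with $\psi(y)\eta(t)$ converge by the previous step. These test functions are dense in $L^1((0,1)\times\Omega)$ for the relevant class. If $\rho_0\in L^\infty$, then $\rho_j$ is genuinely bounded in $L^\infty$ and density of the test functions gives the claim. For $G$, we write
$$G(M_j,f_j)=\int_0^1\!\int_\Omega\rho_j|v_j|^2\,dy\,dt ,$$
replace $|v_j|^2$ by $|v|^2$ at cost $\le C\|v_j-v\|_\infty\sup_t\|\rho_j(t)\|_{L^1}$, and then apply the uniform-in-$t$ density convergence with $\psi=|v(t,\cdot)|^2$. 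This choice of $\psi$ is uniformly continuous in $t$, so an approximation by finitely many times handles it.

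\textbf{Main obstacle.} The hardest step is the uniform-in-$t$ convergence of the source term. It mixes weak convergence of $f_j$ with weak convergence of the Jacobian minors, and equicontinuity in both $s$ and $t$ has to be extracted from the polynomial structure.
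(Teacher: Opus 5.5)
\textbf{Gap in the weak-$^*$ claim.} You prove $\rho_j\rightharpoonup^*\rho$ in $L^\infty((0,1)\times\Omega)$ only under the extra hypothesis $\rho_0\in L^\infty(\Omega)$. Otherwise you replace the claim by convergence against products $\eta(t)\psi(y)$ together with an $L^\infty_tL^1_y$ bound. That replacement is strictly weaker than the statement. Without a uniform $L^\infty$ bound you cannot pass from these products to general $L^1((0,1)\times\Omega)$ test functions, and a priori the $\rho_j$ need not even lie in $L^\infty$.

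The missing observation is that the extra hypothesis holds automatically in $\mathcal A_L^{\rho_1}$. Both sides of the endpoint constraint lie in $L^1(\Omega)$, so the constraint holds a.e. By \eqref{ex5.2},
\[
0\le\rho_0=\rho_1(M)J_1[M]-\int_0^1f(s)J_s[M]\,ds\le L^n\bigl(\|\rho_1\|_{L^\infty(\Omega)}+\|f\|_{L^1(0,1)}\bigr).
\]
Set $B:=\sup_j\|f_j\|_{L^2(0,1)}$. Then $0\le R_t[M_j,f_j]\le\|\rho_0\|_{L^\infty(\Omega)}+L^nB$. Since $\rho_j=R_t/J_t$ with $J_t\ge L^{-n}$, this gives
\[
\rho_j\le L^n\|\rho_0\|_{L^\infty(\Omega)}+L^{2n}B
\]
uniformly in $j$ and $t$, and the same holds for $\rho$. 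With this bound, your uniform-in-$t$ convergence against $\psi\in C(\overline\Omega)$ gives convergence against $\eta(t)\psi(y)$ with $\eta\in L^1(0,1)$. Finite sums of such products are dense in $L^1((0,1)\times\Omega)$, so the weak-$^*$ convergence follows exactly as in the paper.

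\textbf{The other steps are correct.} Your velocity estimate is the paper's. Your proof of the $\sup_t$ density convergence differs mildly from the paper's route.
\begin{itemize}
\item The paper first shows $\sup_t\bigl|\int_\Omega h(R_{j,t}-R_t)\,dx\bigr|\to0$ for every $h\in L^1(\Omega)$. It writes $R_{j,t}=\rho_0+\sum_k c_{j,k}F_{j,k}(t)$, where $F_{j,k}(t)=\int_0^tf_j(s)s^k\,ds$ converges uniformly to $F_k(t)$. It then uses $L^1$-compactness of $\{\psi\circ X_t\}_t$.
\item You instead split off $\rho_0$ and freeze $\psi(X_t)$. You get uniform convergence of $\Theta_j$ on $[0,1]^2$ from equicontinuity in $t$, and use compactness of $\{\mathbf 1_{[0,t]}\Theta(t,\cdot)\}_t$ in $L^2(0,1)$.
\end{itemize}
Both routes work. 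Your proof that $G(M_j,f_j)\to G(M,f)$ partitions time and tests with $|v(t_i,\cdot)|^2$, using the joint Lipschitz continuity of $v$. This is valid as written and needs only the $L^1$ bounds on $\rho_j(t,\cdot)$. The paper instead tests the weak-$^*$ convergence against $|v|^2\in L^1$, which relies on the $L^\infty$ bound above.
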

\begin{proof}
Write $
X_{j,t}:=X_t[M_j]$, $X_t:=X_t[M]$,
$Y_{j,t}:=X_{j,t}^{-1}$, $Y_t:=X_t^{-1}$,
and set $
u_j:=M_j-\mathrm{Id}$, $u:=M-\mathrm{Id}$. We first prove convergence of the velocities.
The inverse Lipschitz bound gives
$$
\begin{aligned}
|Y_{j,t}(y)-Y_t(y)|
&\le
L|X_{j,t}(Y_{j,t}(y))-X_{j,t}(Y_t(y))|\\
&=
L|X_t(Y_t(y))-X_{j,t}(Y_t(y))|\\
&\le L\|M_j-M\|_{L^\infty(\overline\Omega)}.
\end{aligned}
$$
Since $u$ is $(L+1)$-Lipschitz and $v_j(t,y)=u_j(Y_{j,t}(y))$, $v(t,y)=u(Y_t(y))$, we obtain
$$
\|v_j-v\|_{L^\infty([0,1]\times\overline\Omega)}
\le
\bigl(1+L(L+1)\bigr)
\|M_j-M\|_{L^\infty(\overline\Omega)}
\rightarrow0.
$$
We next establish a uniform bound for the densities.
The endpoint constraint for $(M,f)$ and the Jacobian
bounds imply
$$
0\le\rho_0
=
\rho_1(M)J_1[M]
-
\int_0^1f(s)J_s[M]\,ds
\le
L^n\left(
\|\rho_1\|_{L^\infty(\Omega)}
+\|f\|_{L^1(0,1)}
\right).
$$
Thus $\rho_0\in L^\infty(\Omega)$.
Since weak convergence of $(f_j)$ gives $
B:=\sup_j\|f_j\|_{L^2(0,1)}<\infty$, the reconstruction formulas yield
$$
0\le R_t[M_j,f_j]
\le \|\rho_0\|_{L^\infty(\Omega)}+L^nB
$$
and
$$
\sup_j\sup_{t\in[0,1]}
\|\rho_j(t,\cdot)\|_{L^\infty(\Omega)}
\le
L^n\|\rho_0\|_{L^\infty(\Omega)}+L^{2n}B.
$$
The same type of bound holds for $\rho$. To prove convergence of the intermediate states, expand
$$
J_t[M_j]=\sum_{k=0}^n c_{j,k}t^k,
\qquad
J_t[M]=\sum_{k=0}^n c_kt^k.
$$
The coefficients are sums of minors of $\nabla u_j$
and $\nabla u$, respectively.
The uniform Lipschitz bounds and uniform convergence
of the maps imply that $
M_j\rightharpoonup M$ in $W^{1,p}(\Omega;\mathbb R^n)$ for every $1<p<\infty$. By weak continuity of minors and the uniform
$L^\infty$ bounds on these coefficients, $
c_{j,k}\rightharpoonup^*c_k$ in $L^\infty(\Omega)$, $k=0,\ldots,n$. Define
$$
F_{j,k}(t):=\int_0^t f_j(s)s^k\,ds,
\qquad
F_k(t):=\int_0^t f(s)s^k\,ds.
$$
Weak convergence of the sources gives
$F_{j,k}(t)\to F_k(t)$ for every $t$. Moreover, $
|F_{j,k}(t)-F_{j,k}(s)|
\le B|t-s|^{1/2}$. Consequently,
$
F_{j,k}\to F_k$ uniformly on $[0,1]$. Writing $
R_{j,t}:=R_t[M_j,f_j]$ and $R_t:=R_t[M,f]$, we therefore have
$$
R_{j,t}=\rho_0+\sum_{k=0}^n c_{j,k}F_{j,k}(t),
\qquad
R_t=\rho_0+\sum_{k=0}^n c_kF_k(t).
$$
It follows that, for every $h\in L^1(\Omega)$,
\begin{equation}\label{eq:intermediate-state-stability}
\sup_{t\in[0,1]}
\left|
\int_\Omega h(x)(R_{j,t}(x)-R_t(x))\,dx
\right|
\longrightarrow0.
\end{equation}
Indeed, each term can be estimated using the uniform
convergence of $F_{j,k}$ and the weak-$^*$ convergence of $c_{j,k}$. Fix $\psi\in C(\overline\Omega)$.
By change of variables,
$$
\begin{aligned}
&\int_\Omega
\psi(y)(\rho_j(t,y)-\rho(t,y))\,dy\\
&\quad=
\int_\Omega
\bigl(\psi(X_{j,t}(x))-\psi(X_t(x))\bigr)
R_{j,t}(x)\,dx\\
&\qquad+
\int_\Omega
\psi(X_t(x))(R_{j,t}(x)-R_t(x))\,dx.
\end{aligned}
$$
The first term tends to zero uniformly in $t$,
because $X_{j,t}\to X_t$ uniformly in space and time
and the intermediate states are uniformly bounded. For the second term, since $\{\psi\circ X_t:t\in[0,1]\}$ is compact in
$L^1(\Omega)$ and $R_{j,t}-R_t$ is uniformly bounded
in $L^\infty(\Omega)$, a finite approximation argument
using \eqref{eq:intermediate-state-stability} gives
$$
\sup_{t\in[0,1]}
\left|
\int_\Omega
\psi(X_t(x))(R_{j,t}(x)-R_t(x))\,dx
\right|
\rightarrow0.$$
Hence
$$
\sup_{t\in[0,1]}
\left|
\int_\Omega
\psi(y)(\rho_j(t,y)-\rho(t,y))\,dy
\right|
\rightarrow0.$$
This convergence implies convergence against
space-time test functions of the form
$\eta(t)\psi(y)$, with
$\eta\in L^1(0,1)$ and
$\psi\in C(\overline\Omega)$.
Finite sums of such functions are dense in
$L^1((0,1)\times\Omega)$.
Together with the uniform density bounds, this proves $
\rho_j\rightharpoonup^*\rho$ in $L^\infty((0,1)\times\Omega)$. Finally, the energy-preserving reconstruction gives
$$
G(M_j,f_j)
=
\int_0^1\int_\Omega\rho_j|v_j|^2\,dy\,dt,
\qquad
G(M,f)
=
\int_0^1\int_\Omega\rho|v|^2\,dy\,dt.
$$
Thus
$$
\begin{aligned}
G(M_j,f_j)-G(M,f)
={}&
\int_0^1\int_\Omega
\rho_j\bigl(|v_j|^2-|v|^2\bigr)\,dy\,dt\\
&+
\int_0^1\int_\Omega
(\rho_j-\rho)|v|^2\,dy\,dt.
\end{aligned}
$$
The first term tends to zero by uniform convergence
of the velocities and the uniform density bounds.
The second tends to zero by weak-$^*$ convergence of the  densities. Therefore $G(M_j,f_j)\to G(M,f)$.
\end{proof}

\subsection{Strong source penalization and the constant-source limit}
\label{sec5.2}
We work in the bounded-target setting and use
$\mathcal A_L^{\rho_1}$.
As $\alpha\to0^+$, minimization first selects the
smallest admissible source cost and then the least
transport energy among the resulting pairs.
The endpoint mass difference fixes the mean source.
The question is whether this mean can be realized
by a constant admissible source.
We establish the general limit first and then
characterize the constant-source case.

Set
$$
a:=
\frac{\displaystyle\int_\Omega\rho_1(x)\,dx
-\displaystyle\int_\Omega\rho_0(x)\,dx}{|\Omega|}.
$$
Every $(M,f)\in\mathcal A_L^{\rho_1}$ determines
a dynamic admissible triple by the reconstruction
established above.
Taking $\varphi\equiv1$ in the weak formulation of
the continuity equation gives
$$
\int_\Omega\rho_1(y)\,dy-\int_\Omega\rho_0(y)\,dy
=|\Omega|\int_0^1f(t)\,dt.
$$
Thus $\int_0^1f(t)\,dt=a$, and consequently
$$
\|f\|_{L^2(0,1)}^2
=a^2+\|f-a\|_{L^2(0,1)}^2.
$$
In particular,
\begin{equation}\label{ex5.10}
E_\alpha(M,f)-\frac{a^2}{\alpha}
=G(M,f)+\frac1\alpha\|f-a\|_{L^2(0,1)}^2.
\end{equation}
The source cost is therefore bounded below by $a^2$,
with equality precisely when $f=a$ a.e.
Whether this lower bound is attained in
$\mathcal A_L^{\rho_1}$ also depends on the endpoint
densities and the prescribed geometric constraints. $b_L$ is the least source cost compatible with
the admissibility conditions.

\begin{proof}[Proof of Theorem \ref{thm:minimum-source-limit}]
Since $\mathcal A_L^{\rho_1}\ne\varnothing$ and every
admissible source belongs to $L^2(0,1)$, we have $0\le b_L<\infty$.
Choose a minimizing sequence
$(M_j,f_j)\in\mathcal A_L^{\rho_1}$ such that
$\|f_j\|_{L^2(0,1)}^2\rightarrow b_L$.
In particular, $(f_j)$ is bounded in $L^2(0,1)$. Since $\Omega$ is bounded and
$M_j(\overline\Omega)\subset\overline\Omega$,
the maps are uniformly bounded in $L^\infty(\Omega)$.
Their common Lipschitz bound also gives $
\|\nabla M_j\|_{L^\infty(\Omega)}
\le \sqrt n\,L$. Thus $(M_j)$ is bounded in
$W^{1,n}(\Omega;\mathbb R^n)$, weak compactness in $W^{1,n}$ and
$L^2(0,1)$ yields a subsequence such that
$$
M_j\rightharpoonup M
\quad\text{in }W^{1,n}(\Omega;\mathbb R^n),
\qquad
f_j\rightharpoonup f
\quad\text{in }L^2(0,1).
$$
Moreover, the maps are uniformly bounded and
equicontinuous on the compact set $\overline\Omega$.
The Arzel\`a--Ascoli theorem therefore gives a further
subsequence converging uniformly on $\overline\Omega$.
The uniform limit agrees almost everywhere with the
weak Sobolev limit $M$, and we use this continuous
representative of $M$.
Hence $
M_j\to M$ uniformly on $\overline\Omega$.
Finally, the sequential closedness established in
Lemma~\ref{L6.1} implies
$(M,f)\in\mathcal A_L^{\rho_1}$.
Applying this compactness to a minimizing sequence for
$b_L$, and using weak lower semicontinuity of the
$L^2$ norm, proves that $b_L$ is attained.
The same argument shows that $\mathcal S_L^{\rho_1}$
is sequentially $\tau$-compact. 
Lower semicontinuity of $G$ therefore gives a minimizer
on $\mathcal S_L^{\rho_1}$.
Since every admissible pair has finite transport energy,
$0\le g_L<\infty$.

For the liminf inequality, let $\alpha_j\to0^+$ and
$(M_j,f_j)\xrightarrow{\tau}(M,f)$.
If the lower limit of
$\widehat{\mathcal F}_{\alpha_j}(M_j,f_j)$ is infinite,
there is nothing to prove.
Otherwise, pass to a subsequence realizing the finite
lower limit, along which these energies are bounded.
The pairs then belong to $\mathcal A_L^{\rho_1}$, and
$$
0\le\|f_j\|_{L^2(0,1)}^2-b_L
\le\alpha_j\widehat{\mathcal F}_{\alpha_j}(M_j,f_j)
\rightarrow0.
$$
Sequential closedness and weak lower semicontinuity give
$(M,f)\in\mathcal A_L^{\rho_1}$ and
$$b_L\le\|f\|_{L^2(0,1)}^2
\le\liminf_{j\to\infty}\|f_j\|_{L^2(0,1)}^2=b_L.
$$
Thus $(M,f)\in\mathcal S_L^{\rho_1}$, and
$$
\liminf_{j\to\infty}
\widehat{\mathcal F}_{\alpha_j}(M_j,f_j)
\ge\liminf_{j\to\infty}G(M_j,f_j)
\ge G(M,f)=\widehat{\mathcal F}_0(M,f).
$$
For $(M,f)\in\mathcal S_L^{\rho_1}$, the constant sequence
is a recovery sequence because
$$
\widehat{\mathcal F}_\alpha(M,f)
=G(M,f)=\widehat{\mathcal F}_0(M,f).
$$
Outside this set, the recovery inequality is automatic.
This proves sequential $\Gamma$-convergence.

If $0<\alpha\le1$ and
$\widehat{\mathcal F}_\alpha(M,f)\le T$, then
$$
\|f\|_{L^2(0,1)}^2\le b_L+\alpha T\le b_L+T.
$$
Together with the map bounds and sequential closedness,
this gives sequential equicoercivity.
Let $(\widehat M,\widehat f)\in\mathcal S_L^{\rho_1}$
satisfy $G(\widehat M,\widehat f)=g_L$.
By minimality of $(M_\alpha,f_\alpha)$ and
$\|\widehat f\|_{L^2(0,1)}^2=b_L$, we have
\begin{equation}\label{ex5.13}
0\le G(M_\alpha,f_\alpha)\le \widehat{\mathcal F}_\alpha(M_\alpha,f_\alpha)\le \widehat{\mathcal F}_\alpha(\widehat M,\widehat f)
=g_L.
\end{equation}
Consequently,
$$
0\le \|f_\alpha\|_{L^2(0,1)}^2-b_L
\le \alpha g_L.
$$
Thus the sources are bounded in $L^2(0,1)$ for
$0<\alpha\le1$, and their squared norms converge to $b_L$. For any sequence $\alpha_j\to0^+$, the compactness
established above yields a subsequence such that
$$
(M_{\alpha_j},f_{\alpha_j})\xrightarrow{\tau}(M,f),
\qquad
M_{\alpha_j}\to M
\quad\text{uniformly on }\overline\Omega.
$$
Sequential closedness gives
$(M,f)\in\mathcal A_L^{\rho_1}$.
By the definition of $b_L$ and weak lower
semicontinuity,
$$
b_L\le \|f\|_{L^2(0,1)}^2
\le \liminf_{j\to\infty}
\|f_{\alpha_j}\|_{L^2(0,1)}^2=b_L.
$$
Hence $(M,f)\in\mathcal S_L^{\rho_1}$.
Moreover, weak convergence of the sources and
convergence of their norms imply
$f_{\alpha_j}\to f$ strongly in $L^2(0,1)$. Since $g_L$ is the minimum of $G$ on
$\mathcal S_L^{\rho_1}$, lower semicontinuity and
\eqref{ex5.13} give
$$
g_L\le G(M,f)
\le \liminf_{j\to\infty}G(M_{\alpha_j},f_{\alpha_j})
\le \limsup_{j\to\infty}G(M_{\alpha_j},f_{\alpha_j})
\le g_L.
$$
Thus $G(M,f)=g_L$ and
$G(M_{\alpha_j},f_{\alpha_j})\to g_L$.
Using \eqref{ex5.13} once more,
we obtain $
\widehat{\mathcal F}_{\alpha_j}
(M_{\alpha_j},f_{\alpha_j})\to g_L$
and
$$
\frac{\|f_{\alpha_j}\|_{L^2(0,1)}^2-b_L}{\alpha_j}
=
\widehat{\mathcal F}_{\alpha_j}
(M_{\alpha_j},f_{\alpha_j})
-G(M_{\alpha_j},f_{\alpha_j})
\rightarrow0.
$$
Since every sequence $\alpha_j\to0^+$ admits a
subsequence along which the transport energies
converge to $g_L$ and the rescaled source penalties
converge to zero, both limits in
\eqref{eq:minimum-source-energy-limits} hold as
$\alpha\to0^+$ for any choice of minimizers.
Finally,
$$
\min_{\mathcal A_L^{\rho_1}}E_\alpha-\frac{b_L}{\alpha}
=
G(M_\alpha,f_\alpha)
+\frac{\|f_\alpha\|_{L^2(0,1)}^2-b_L}{\alpha}
\rightarrow g_L,
$$
which proves \eqref{eq:minimum-source-expansion}.
\end{proof}

\begin{corollary}[Convergence of reconstructed paths]
Let $\alpha_j\to0^+$ and let
$(M_{\alpha_j},f_{\alpha_j})$ be minimizers as in
Theorem~\ref{thm:minimum-source-limit}.
Along the subsequence provided by that theorem,
let $(\rho_{\alpha_j},v_{\alpha_j},f_{\alpha_j})$
and $(\rho,v,f)$ be the reconstructed triples.
Then $
v_{\alpha_j}\to v$ uniformly on $[0,1]\times\overline\Omega$,
and, for every $\psi\in C(\overline\Omega)$,
$$
\sup_{t\in[0,1]}
\left|
\int_\Omega
\psi(y)(\rho_{\alpha_j}(t,y)-\rho(t,y))\,dy
\right|\longrightarrow0.
$$
Moreover, $\rho_{\alpha_j}\rightharpoonup^*\rho$
in $L^\infty((0,1)\times\Omega)$, and
$(\rho,v,f)$ minimizes
$$
\int_0^1\int_\Omega\rho|v|^2\,dy\,dt
$$
among triples in $\mathfrak D_L^{\rho_0,\rho_1}$
satisfying $\|f\|_{L^2(0,1)}^2=b_L$.
\end{corollary}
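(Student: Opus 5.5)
The plan is to combine Theorem~\ref{thm:minimum-source-limit} with Proposition~\ref{P5.1} and Theorem~\ref{TH1.4}. Fix $\alpha_j\to0^+$ and minimizers $(M_{\alpha_j},f_{\alpha_j})\in\mathcal A_L^{\rho_1}$. Theorem~\ref{thm:minimum-source-limit} gives a subsequence with $M_{\alpha_j}\to M$ uniformly on $\overline\Omega$ and $f_{\alpha_j}\to f$ strongly in $L^2(0,1)$. The limit satisfies $(M,f)\in\mathcal S_L^{\rho_1}$ and $G(M,f)=g_L$. Strong convergence implies weak convergence, so the hypotheses of Proposition~\ref{P5.1} hold for $(M_{\alpha_j},f_{\alpha_j})$ and $(M,f)$. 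That proposition then yields directly the uniform convergence $v_{\alpha_j}\to v$ on $[0,1]\times\overline\Omega$, the uniform-in-time convergence of $\int_\Omega\psi\rho_{\alpha_j}(t,\cdot)\,dy$ for each $\psi\in C(\overline\Omega)$, and $\rho_{\alpha_j}\rightharpoonup^*\rho$ in $L^\infty((0,1)\times\Omega)$. Here $(\rho,v,f)$ is the triple reconstructed from $(M,f)$ by \eqref{eq:dynamic-reconstruction}.

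For the minimality assertion I will use the correspondence of Theorem~\ref{TH1.4}. Let $(\tilde\rho,\tilde v,\tilde f)\in\mathfrak D_L^{\rho_0,\rho_1}$ with $\|\tilde f\|_{L^2(0,1)}^2=b_L$. By the converse part of Theorem~\ref{TH1.4}, this triple arises from a pair $(\tilde M,\tilde f)\in\mathcal A_L^{\rho_1}$ with $\tilde M=\tilde X_1$, and the correspondence is energy preserving. Subtracting the common source penalty $\alpha^{-1}\|\tilde f\|^2$ from $\mathscr E_\alpha=E_\alpha$ gives
\[
\int_0^1\int_\Omega\tilde\rho|\tilde v|^2\,dy\,dt=G(\tilde M,\tilde f).
\]
Since $\|\tilde f\|^2=b_L$, the pair $(\tilde M,\tilde f)$ lies in $\mathcal S_L^{\rho_1}$, so $G(\tilde M,\tilde f)\ge g_L=G(M,f)$. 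The same energy identity applied to $(M,f)$ gives $\int_0^1\int_\Omega\rho|v|^2=G(M,f)$. Also $(\rho,v,f)\in\mathfrak D_L^{\rho_0,\rho_1}$ and $\|f\|^2=b_L$, so it is itself a competitor. Hence it is the minimizer.

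The main point requiring care is that Proposition~\ref{P5.1} needs both endpoints in $\mathcal A_L^{\rho_1}$, and uniform rather than merely weak convergence of the maps. Both are supplied by Theorem~\ref{thm:minimum-source-limit}, whose limit lies in $\mathcal S_L^{\rho_1}\subset\mathcal A_L^{\rho_1}$ by the closedness in Lemma~\ref{L6.1}, and which fixes the continuous representative of $M$ so that its reconstruction is well defined. A secondary point is matching the reconstructed objects with the correct representatives. I would note that the reconstruction uses the continuous representative throughout, so the $G$-equality from the energy identity applies verbatim.
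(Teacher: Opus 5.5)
Your proposal is correct and follows the paper's route. The convergence statements come from applying Proposition~\ref{P5.1} to the subsequence and limit supplied by Theorem~\ref{thm:minimum-source-limit}, and minimality follows from $G(M,f)=g_L$ together with the energy-preserving correspondence of Theorem~\ref{TH1.4}; you simply make explicit the details the paper leaves implicit (strong implies weak convergence of the sources, the converse direction of the correspondence, and cancellation of the common source penalty).
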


\begin{proof}
The convergence follows from
Proposition~\ref{P5.1}.
The minimizing property follows from
$G(M,f)=g_L$ and the energy-preserving correspondence.
\end{proof}

We now identify when the preceding limit is a
constant-source problem.
The mass constraint gives $b_L\ge a^2$.
Since $b_L$ is attained, equality holds if and only if
there exists $(M_*,a)\in\mathcal A_L^{\rho_1}$.
In that case, $
\mathcal S_L^{\rho_1}
=\{(M,a)\in\mathcal A_L^{\rho_1}\}$. Thus the constant-source feasibility assumption means
that the lower bound imposed by the total mass difference
can be attained while satisfying the endpoint and
geometric constraints. Retaining the notation for the constant-source problem,
define
$$
\mathcal F_{\alpha,a}(M,f):=
\begin{cases}
E_\alpha(M,f)-a^2/\alpha,
&(M,f)\in\mathcal A_L^{\rho_1},\\
+\infty,&\text{otherwise},
\end{cases}
$$
and
$$
\mathcal F_{0,a}(M,f):=
\begin{cases}
G(M,a),&f=a\ \text{a.e. and }(M,a)\in\mathcal A_L^{\rho_1},\\
+\infty,&\text{otherwise}.
\end{cases}
$$

\begin{corollary}[Constant-source limit]
\label{thm:constant-source-limit}
Assume that there exists $(M_*,a)\in\mathcal A_L^{\rho_1}$.
Then
$$
\mathcal F_{\alpha,a}
\xrightarrow{\Gamma}\mathcal F_{0,a}
\qquad\text{as }\alpha\to0^+
$$
in the sequential sense with respect to $\tau$.
The constant-source problem admits a minimizer, and,
writing
$$
m_a:=\min\{G(M,a):(M,a)\in\mathcal A_L^{\rho_1}\},
$$
we have
\begin{equation}\label{eq:constant-source-expansion}
\min_{\mathcal A_L^{\rho_1}}E_\alpha
=\frac{a^2}{\alpha}+m_a+o(1).
\end{equation}
For every sequence $\alpha_j\to0^+$ and corresponding
minimizers $(M_{\alpha_j},f_{\alpha_j})$, there exist a
subsequence, not relabeled, and a minimizer $(M,a)$ of
$\mathcal F_{0,a}$ such that
$$
M_{\alpha_j}\rightharpoonup M
\quad\text{in }W^{1,n}(\Omega;\mathbb R^n),
\qquad
M_{\alpha_j}\to M
\quad\text{uniformly on }\overline\Omega,
$$
and
$$
f_{\alpha_j}\to a
\quad\text{strongly in }L^2(0,1).
$$
Moreover, for any choice of minimizers,
\begin{equation}\label{ex5.11}
G(M_\alpha,f_\alpha)\to m_a,
\qquad
\frac1\alpha\|f_\alpha-a\|_{L^2(0,1)}^2\to0.
\end{equation}
In particular, $
\|f_\alpha-a\|_{L^2(0,1)}=o(\sqrt\alpha)$.
\end{corollary}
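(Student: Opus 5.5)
The plan is to deduce this corollary directly from Theorem~\ref{thm:minimum-source-limit}, by identifying $b_L=a^2$ and $\mathcal S_L^{\rho_1}=\{(M,a)\in\mathcal A_L^{\rho_1}\}$.

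\textbf{Step 1: identifying $b_L$ and $\mathcal S_L^{\rho_1}$.} For every $(M,f)\in\mathcal A_L^{\rho_1}$, the mass identity derived above (testing \eqref{ex5.6} with $\varphi\equiv1$ via the reconstruction of Theorem~\ref{TH1.4}) gives $\int_0^1 f\,dt=a$. Hence $\|f\|_{L^2}^2=a^2+\|f-a\|_{L^2}^2\ge a^2$. The assumed pair $(M_*,a)$ attains this bound, so $b_L=a^2$. Moreover, $\|f\|_{L^2}^2=a^2$ holds if and only if $f=a$ a.e. Consequently $\mathcal S_L^{\rho_1}$ is exactly the set of pairs $(M,a)\in\mathcal A_L^{\rho_1}$.

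\textbf{Step 2: matching the functionals.} With Step 1, $\widehat{\mathcal F}_\alpha=\mathcal F_{\alpha,a}$ and $\widehat{\mathcal F}_0=\mathcal F_{0,a}$ as functionals on $Y$, and $g_L=m_a$. Theorem~\ref{thm:minimum-source-limit} then yields three things at once:
\begin{itemize}
\item the sequential $\Gamma$-convergence;
\item attainment of $m_a$, which is the existence of a minimizer for the constant-source problem;
\item the expansion \eqref{eq:constant-source-expansion}, read off from \eqref{eq:minimum-source-expansion}.
\end{itemize}
The same theorem also gives the subsequential compactness of minimizers, with limit $(M,f)\in\mathcal S_L^{\rho_1}$ and $G(M,f)=g_L$. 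By Step 1 this forces $f=a$, so $f_{\alpha_j}\to a$ strongly in $L^2(0,1)$, and $(M,a)$ minimizes $\mathcal F_{0,a}$.

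\textbf{Step 3: the rate for the sources.} The first limit in \eqref{ex5.11} is the first part of \eqref{eq:minimum-source-energy-limits}. For the second, I use $\int_0^1 f_\alpha\,dt=a$ again, which gives
\[
\frac{\|f_\alpha\|^2_{L^2}-b_L}{\alpha}=\frac{\|f_\alpha-a\|_{L^2}^2}{\alpha}.
\]
By \eqref{eq:minimum-source-energy-limits} this quantity tends to $0$. Taking square roots gives $\|f_\alpha-a\|_{L^2}=o(\sqrt\alpha)$.

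The only step needing care is Step 1, specifically that the mean-source identity holds for every admissible pair and not just for minimizers. This follows from the energy-preserving reconstruction in Theorem~\ref{TH1.4}, which applies to all of $\mathcal A_L^{\rho_1}$. Alternatively, one can integrate the endpoint constraint over $\Omega$, using \eqref{ex5.3} and the change of variables $\int_\Omega\rho_1(M)\det\nabla M\,dx=\int_\Omega\rho_1\,dy$.
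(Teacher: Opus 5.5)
Your proposal is correct and follows the paper's own proof. The paper likewise identifies $b_L=a^2$, $\mathcal S_L^{\rho_1}=\{(M,a)\in\mathcal A_L^{\rho_1}\}$ and $g_L=m_a$, so that $\widehat{\mathcal F}_\alpha=\mathcal F_{\alpha,a}$ and $\widehat{\mathcal F}_0=\mathcal F_{0,a}$, and then reads off every assertion from Theorem~\ref{thm:minimum-source-limit} together with $\|f\|_{L^2(0,1)}^2-a^2=\|f-a\|_{L^2(0,1)}^2$; it additionally spells out the comparison chain with a constant-source minimizer, which you obtain instead by citing \eqref{eq:minimum-source-energy-limits}.
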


\begin{proof}
The assumption gives $b_L=a^2$ and $g_L=m_a$.
Hence $\widehat{\mathcal F}_\alpha=\mathcal F_{\alpha,a}$
and $\widehat{\mathcal F}_0=\mathcal F_{0,a}$.
All convergence assertions follow from
Theorem~\ref{thm:minimum-source-limit} and the identity
$\|f\|_{L^2(0,1)}^2-a^2=\|f-a\|_{L^2(0,1)}^2$.
In particular, comparison with a constant-source
minimizer $(\widehat M,a)$ gives
\begin{equation}\label{ex5.12}
0\le G(M_\alpha,f_\alpha)
\le\mathcal F_{\alpha,a}(M_\alpha,f_\alpha)
\le\mathcal F_{\alpha,a}(\widehat M,a)=m_a.
\end{equation}
Both middle terms converge to $m_a$, so their difference
is $\alpha^{-1}\|f_\alpha-a\|_{L^2(0,1)}^2\to0$.
\end{proof}

\begin{remark}
The limit selects the least transport energy among
admissible pairs with constant source $a$.
The dynamic paths reconstructed from the limiting
pairs create or remove mass uniformly in space
at a constant rate in time, so their total mass
varies linearly between the endpoint masses,
$$
\int_\Omega\rho(t,y)\,dy
=(1-t)\int_\Omega\rho_0(y)\,dy
+t\int_\Omega\rho_1(y)\,dy.
$$
The limiting problem minimizes the transport energy
needed to connect the prescribed densities under
this source.
When $a=0$, it reduces to the zero-source transport
problem within $\mathcal A_L^{\rho_1}$.
\end{remark}

\begin{proposition}[A lower bound for the excess source cost]
\label{P5.2}
Assume that $\mathcal A_L^{\rho_1}\ne\varnothing$ and
$a<0$. Set
$$
c_0:=\operatorname*{ess\,inf}_{\Omega}\rho_0,
\qquad
c_1:=\min_{\overline\Omega}\rho_1.$$
Then
$$
b_L-a^2
\ge
L^{-2n}
\left[
L^{-n}(c_1+|a|)-c_0
\right]_+^2,
$$
where $[r]_+:=\max\{r,0\}$.
In particular, a necessary condition for the existence
of a constant-source admissible pair is
$$
c_0\ge L^{-n}(c_1+|a|).
$$
\end{proposition}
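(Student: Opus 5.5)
The plan is to prove the bound pointwise for every admissible pair and then pass to the infimum. Fix an arbitrary $(M,f)\in\mathcal A_L^{\rho_1}$ and write $g:=f-a$. As shown before \eqref{ex5.10}, taking $\varphi\equiv1$ in the reconstructed dynamic problem gives $\int_0^1f\,dt=a$. Hence $\int_0^1 g\,dt=0$ and
$$
\|f\|_{L^2(0,1)}^2=a^2+\|g\|_{L^2(0,1)}^2 .
$$
It therefore suffices to show
$$
\|g\|_{L^2(0,1)}\ge L^{-n}\bigl[L^{-n}(c_1+|a|)-c_0\bigr]_+
$$
for every admissible pair. Taking the infimum over $\mathcal A_L^{\rho_1}$ (attained by Theorem~\ref{thm:minimum-source-limit}) then gives the bound on $b_L-a^2$.

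The key step is to rewrite the endpoint constraint as an expression for $\rho_0$ and bound each piece using Lemma~\ref{L6.1}. That lemma gives a single null set $N$ such that $L^{-n}\le J_s[M](x)\le L^n$ for all $s\in[0,1]$ and all $x\notin N$. Since $J_1[M]=\det\nabla M$, the constraint reads, a.e. in $\Omega$,
$$
\rho_0=\rho_1(M)J_1[M]+|a|\int_0^1J_s[M]\,ds-\int_0^1g(s)J_s[M]\,ds ,
$$
where we used $a=-|a|$. The three terms are controlled as follows.
\begin{itemize}
\item Since $M$ takes values in $\overline\Omega$, the first term is at least $c_1L^{-n}$.
\item The second term is at least $|a|L^{-n}$.
\item The last term is bounded in absolute value by $L^n\|g\|_{L^1(0,1)}\le L^n\|g\|_{L^2(0,1)}$, by Cauchy--Schwarz on $(0,1)$.
\end{itemize}
Combining these gives, for a.e. $x$,
$$
\rho_0(x)\ge L^{-n}(c_1+|a|)-L^n\|g\|_{L^2(0,1)} .
$$
The right-hand side is a constant, so the same inequality holds for $c_0=\operatorname*{ess\,inf}\rho_0$. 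Rearranging yields $L^n\|g\|_{L^2}\ge L^{-n}(c_1+|a|)-c_0$. Since $\|g\|_{L^2}\ge0$, we may replace the right-hand side by its positive part, and squaring gives the claimed factor $L^{-2n}$.

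For the necessary condition, a constant-source pair $(M_*,a)\in\mathcal A_L^{\rho_1}$ forces $b_L=a^2$, because $b_L\ge a^2$ always holds. The bound then gives $[L^{-n}(c_1+|a|)-c_0]_+=0$, which is exactly $c_0\ge L^{-n}(c_1+|a|)$.

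The only delicate point is the uniformity of the Jacobian bounds in $s$ off a single null set. This is needed so that the time integrals $\int_0^1 J_s[M]\,ds$ and $\int_0^1 g(s)J_s[M]\,ds$ can be estimated pointwise in $x$. Lemma~\ref{L6.1} provides exactly this, because all the $J_s[M]$ are computed from the same differentiability points of $M$ given by Rademacher's theorem.
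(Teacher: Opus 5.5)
Your proposal is correct and follows the paper's proof essentially line by line. The paper also writes $h=f-a$ and rewrites the endpoint constraint as $\rho_0=\rho_1(M)J_1[M]+|a|\int_0^1J_t[M]\,dt-\int_0^1h(t)J_t[M]\,dt$. It bounds this below by $L^{-n}(c_1+|a|)-L^n\|h\|_{L^2(0,1)}$ using the Jacobian bounds of Lemma~\ref{L6.1} and Cauchy--Schwarz, takes the essential infimum, and concludes with $\|f\|_{L^2(0,1)}^2=a^2+\|f-a\|_{L^2(0,1)}^2$. Your appeal to attainment of $b_L$ is harmless but unnecessary, since the bound holds for every admissible pair before the infimum is taken.
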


\begin{proof}
Let $(M,f)\in\mathcal A_L^{\rho_1}$ and write $h:=f-a$.
The endpoint constraint gives
$$
\rho_0=
\rho_1(M)J_1[M]
+
|a|\int_0^1J_t[M]\,dt
-
\int_0^1h(t)J_t[M]\,dt.
$$
Using $L^{-n}\le J_t[M]\le L^n$ and
Cauchy-Schwarz, we obtain
$$
\rho_0
\ge
L^{-n}(c_1+|a|)
-
L^n\|h\|_{L^2(0,1)}
\quad\text{a.e. in }\Omega.
$$
Taking the essential infimum yields
$$
\|f-a\|_{L^2(0,1)}
\ge
L^{-n}
\left[
L^{-n}(c_1+|a|)-c_0
\right]_+.
$$
Since
$$
\|f\|_{L^2(0,1)}^2
=
a^2+\|f-a\|_{L^2(0,1)}^2,
$$
taking the infimum over $\mathcal A_L^{\rho_1}$
proves the assertion.
\end{proof}

When the lower bound in Proposition~\ref{P5.2}
is positive, the prescribed densities and flow bounds
force every admissible source to deviate from its mean.
Consequently,
$$
\min_{\mathcal A_L^{\rho_1}}E_\alpha
-\frac{a^2}{\alpha}
=
\frac{b_L-a^2}{\alpha}+g_L+o(1)
\rightarrow+\infty.
$$
Thus the total mass difference alone does not determine
the leading optimal cost.
Example~\ref{ex:no-constant-source} shows that this
phenomenon occurs even when the reconstruction class
is nonempty.

\section{Nonemptiness of Admissible Classes}\label{sec6}

\subsection{A local sufficient condition for nonemptiness}
\label{subsec:local-feasibility}
The following result establishes local feasibility
when both endpoint densities are H\"older continuous. The construction applies to perturbations of a fixed
reference pair $(\overline\rho-a,\overline\rho)$.
The reference density need not be constant, and the
distortion and flow bounds are prescribed in advance.

\begin{proposition}[Local feasibility for H\"older densities]
\label{prop:local-feasibility}
Let $n\ge2$, $K>n^{n/2}$, $L>1$ and $0<\beta<\gamma<1$, and let
$\Omega\subset\mathbb R^n$ be a bounded convex domain
with $C^{3,\beta}$ boundary.
Fix
$\overline\rho\in C^{1,\gamma}(\overline\Omega)$
and $a\in\mathbb R$ such that $\overline\rho\ge0$, $\overline\rho-a\ge0$ and $\min_{\overline\Omega}
\left(\overline\rho-\frac a2\right)>0$. There exist $\varepsilon,C>0$, depending only on
$\Omega,n,\beta,\gamma,\overline\rho,a,K,L$, such that the following holds. Suppose that $0\le\rho_0,\,\rho_1\in C^{0,\beta}(\overline\Omega)$ and $
\int_\Omega(\rho_1-\rho_0)\,dx=a|\Omega|$, and
$$
d:=
\|\rho_0-\overline\rho+a\|_{C^{0,\beta}(\overline\Omega)}
+
\|\rho_1-\overline\rho\|_{C^{0,\beta}(\overline\Omega)}
<\varepsilon.
$$
Then there exists
$M\in C^{1,\beta}(\overline\Omega;\mathbb R^n)$ such that $
M=\mathrm{Id}$ on $\partial\Omega$, $(M,a)\in
\mathcal V_K^{\rho_1}\cap\mathcal A_L^{\rho_1}$, and $$
\|M-\mathrm{Id}\|_{C^{1,\beta}(\overline\Omega)}
\le Cd.$$
\end{proposition}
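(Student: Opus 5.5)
We look for $M=\mathrm{Id}+u$ with $u\in C^{1,\beta}(\overline\Omega;\mathbb R^n)$, $u=0$ on $\partial\Omega$, $\|u\|_{C^{1,\beta}}$ small, and $f\equiv a$. The plan is to turn the endpoint constraint into a divergence equation and solve it with Schauder's fixed point theorem.

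\emph{Step 1: linearize the constraint.} With $f\equiv a$, the endpoint constraint reads
\[
\rho_1(x+u)\det(I+\nabla u)=\rho_0+a\int_0^1\det(I+t\nabla u)\,dt .
\]
Expanding to first order in $u$, and using $\rho_1\approx\overline\rho$ and $\rho_0\approx\overline\rho-a$, the linear part is $\nabla\overline\rho\cdot u+\overline\rho\,\operatorname{div}u-\frac a2\operatorname{div}u=\operatorname{div}\bigl((\overline\rho-\frac a2)u\bigr)$. So the constraint is equivalent to
\[
\operatorname{div}\bigl((\overline\rho-\tfrac a2)u\bigr)=g(u):=(\rho_0-\overline\rho+a)-(\rho_1-\overline\rho)+N(u),
\]
where $N(u)$ collects all remaining terms. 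These are:
\begin{itemize}
\item $(\rho_1-\overline\rho)(x)-(\rho_1-\overline\rho)(x+u)$;
\item the Taylor remainder $\overline\rho(x)+\nabla\overline\rho\cdot u-\overline\rho(x+u)$;
\item the superlinear minors coming from $\det(I+\nabla u)$ and $\int_0^1 J_t\,dt$;
\item cross terms between $\rho_1(x+u)-\overline\rho$ and $\det(I+\nabla u)-1$.
\end{itemize}

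\emph{Step 2: compatibility.} For $u=0$ on $\partial\Omega$ with $\|\nabla u\|_\infty$ small, each $\mathrm{Id}+tu$ is a $C^1$ diffeomorphism of $\overline\Omega$ onto itself (degree argument, using convexity). Hence $\int_\Omega\rho_1(x+u)\det(I+\nabla u)\,dx=\int_\Omega\rho_1$ and $\int_\Omega J_t\,dx=|\Omega|$. Combined with $\int_\Omega(\rho_1-\rho_0)=a|\Omega|$, this gives $\int_\Omega g(u)\,dx=0$ automatically, which is exactly the compatibility needed to solve $\operatorname{div}w=g$ with $w=0$ on $\partial\Omega$.

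\emph{Step 3: fixed point.} Let $\mathcal B$ be a bounded linear solution operator (of Bogovski\u{\i} type) mapping mean-zero $C^{0,\beta}(\overline\Omega)$ functions to $C^{1,\beta}_0(\overline\Omega;\mathbb R^n)$ fields. This is where the $C^{3,\beta}$ boundary is used. Define
\[
T(u):=\frac{\mathcal B\,g(u)}{\overline\rho-a/2},
\]
which is legitimate since $\overline\rho-\frac a2$ is positive and $C^{1,\gamma}$. We need the estimate
\[
\|N(u)\|_{C^{0,\beta}}\le C\bigl(d\,\|u\|_{C^{1,\beta}}^{\theta}+\|u\|_{C^{1,\beta}}^{1+\min(1,\gamma-\beta)}\bigr)
\]
for some $\theta>0$, and we prove it term by term:
\begin{itemize}
\item For the term $(\rho_1-\overline\rho)\circ(\mathrm{Id}+u)-(\rho_1-\overline\rho)$, it suffices to bound its $C^{0,\beta}$ norm by $Cd$. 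We use only that composition with a near-identity $C^1$ map is bounded on $C^{0,\beta}$.
\item For the Taylor remainder of $\overline\rho$, $\nabla\overline\rho\in C^{0,\gamma}$ with $\gamma>\beta$ gives a superlinear $C^{0,\beta}$ bound. This is the reason for the hypothesis $\beta<\gamma$.
\item The determinant terms are quadratic in $\nabla u$.
\end{itemize}
Then $T$ maps the closed ball $\{\|u\|_{C^{1,\beta}}\le Cd,\ u|_{\partial\Omega}=0\}$ into itself once $d<\varepsilon$. This ball is convex and compact in $C^{1,\beta'}$ for $\beta'<\beta$, and $T$ is continuous there in the $C^{1,\beta'}$ topology. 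Here we use that composition $\rho_1\circ(\mathrm{Id}+u)$ is continuous in $C^{0,\beta'}$ when $u$ varies in a $C^{1,\beta}$-bounded set. Schauder's theorem then yields a fixed point, which solves the constraint.

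I expect this step to be the main obstacle. The difficulty is the composition $\rho_1\circ(\mathrm{Id}+u)$ with $\rho_1$ only $C^{0,\beta}$: it is not Lipschitz in $u$, so contraction fails. This is why Schauder is used, and one must check carefully both the self-map bound and the continuity in the weaker H\"older norm.

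\emph{Step 4: admissibility.} With $\|\nabla u\|_\infty\le Cd$ small, every $X_t[M]=\mathrm{Id}+tu$ is $L$-bi-Lipschitz (since $L>1$), is onto $\overline\Omega$, and maps into $\overline\Omega$. Also $J_t\ge0$, and the conditions $M(\Omega)\subset\overline\Omega$ and $\int_\Omega\det\nabla M=|\Omega|$ hold. The distortion bound follows because $|\nabla M|^n\to n^{n/2}$ and $\det\nabla M\to1$ as $d\to0$, while $K>n^{n/2}$. Finally, $R_t\ge0$: since $J_s>0$, the function $t\mapsto R_t=\rho_0+a\int_0^tJ_s\,ds$ is monotone. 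It therefore lies between $R_0=\rho_0\ge0$ and $R_1=\rho_1(M)\det\nabla M\ge0$. Thus $(M,a)\in\mathcal V_K^{\rho_1}\cap\mathcal A_L^{\rho_1}$, and $\|M-\mathrm{Id}\|_{C^{1,\beta}}\le Cd$.
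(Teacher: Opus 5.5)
Your proposal is correct and follows essentially the same route as the paper. You linearize at the reference pair to get $\operatorname{div}(\sigma u)$ with $\sigma=\overline\rho-\frac a2$, invert with a Dacorogna--Moser (Bogovski\u{\i}-type) operator divided by $\sigma$, and use $\gamma>\beta$ for a superlinear Taylor remainder. You then apply Schauder's theorem in a weaker H\"older topology, because $u\mapsto\rho_1\circ(\mathrm{Id}+u)$ is not Lipschitz; this needs the same operator to be bounded at both exponents. Finally you get $R_t\ge0$ from monotonicity in $t$.

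The differences from the paper are cosmetic. You use a ball of radius $Cd$, whereas the paper uses a fixed radius $r$ and derives $\|u\|_{C^{1,\beta}}\le 2C_0d$ a posteriori. You use a degree argument where the paper uses a zero-extension contraction argument to get $X_t[M](\overline\Omega)=\overline\Omega$.

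One correction: your displayed bound with the factor $d\,\|u\|_{C^{1,\beta}}^{\theta}$ is false for merely $C^{0,\beta}$ data, since composition with a near-identity map is not continuous in the $C^{0,\beta}$ norm. This does not affect the argument. As your own bullet says, and as the paper does via $\|\mathcal Q_p(u)\|_{C^{0,\beta}}\le C\|p\|_{C^{0,\beta}}$, the plain $O(d)$ bound is all the self-map step needs.
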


\begin{proof}
\textit{Step 1.} Fix $0<\eta<\beta$.
For $s\in\{\eta,\beta\}$, set
$$
Z_s:=
\left\{
g\in C^{0,s}(\overline\Omega):
\int_\Omega g\,dx=0
\right\},
$$
and
$$
W_s:=
\left\{
u\in C^{1,s}(\overline\Omega;\mathbb R^n):
u=0\text{ on }\partial\Omega
\right\}.
$$
By the construction in
\cite[Theorem~2 and Remark~(ii)]{bibDM1990},
there exists a linear operator $\mathcal B$ such that,
for each $s\in\{\eta,\beta\}$ and $g\in Z_s$,
the vector field $\mathcal B g\in W_s$ satisfies
$$
\operatorname{div}(\mathcal B g)=g
\quad\text{in }\Omega,
\qquad
\mathcal B g=0
\quad\text{on }\partial\Omega,
$$
with the estimate $
\|\mathcal B g\|_{C^{1,s}}
\le C_s\|g\|_{C^{0,s}}$, where $C_s$ depends only on $\Omega$, $n$, and $s$,
and is independent of $g$. The same operator is used at both exponents. Write $\sigma:=\overline\rho-\frac a2$.
Since $\sigma$ is bounded away from zero, the operator $
\mathcal B_\sigma g:=\frac{\mathcal B g}{\sigma}$
satisfies
$$
\operatorname{div}(\sigma\mathcal B_\sigma g)=g\quad\text{and}\quad
\|\mathcal B_\sigma g\|_{C^{1,s}}
\le \widetilde{C_s}\|g\|_{C^{0,s}},
\qquad\text{for }s\in\{\eta,\beta\},
$$
where $\widetilde C_s$ depends only on
$\Omega,n,s$, $\|\sigma^{-1}\|_{C^{1,s}}$,
and the chosen H\"older norms.
In particular, it is independent of $g$.

\noindent
\textit{Step 2. }Choose $0<\theta<1$ such that
$$
1-\theta>L^{-1},
\qquad
1+\theta<L,
\qquad
n^{n/2}
\left(\frac{1+\theta}{1-\theta}\right)^n<K.
$$
If $u\in W_\beta$ and
$\|\nabla u\|_{L^\infty}\le\theta$, its zero extension
$\widetilde u$ to $\mathbb R^n$ is $\theta$-Lipschitz.
Thus, for every $t\in[0,1]$,
$$
(1-\theta)|x-y|
\le
|x+t\widetilde u(x)-y-t\widetilde u(y)|
\le
(1+\theta)|x-y|.
$$
For $x\in\mathbb R^n$, let $
F_t(x):=x+t\widetilde u(x)$. For every fixed $t\in[0,1]$ and $y\in\mathbb R^n$,
the map $
x\longmapsto y-t\widetilde u(x)$ is a contraction on $\mathbb R^n$, with Lipschitz constant at most $t\theta<1$.
It therefore has a unique fixed point $x$,
which satisfies $F_t(x)=y$.
Thus $F_t$ is bijective on $\mathbb R^n$.
Since $\widetilde u=0$ on
$\mathbb R^n\setminus\Omega$, every point outside
$\Omega$ is fixed by $F_t$.
If $x\in\Omega$ and $F_t(x)=y\notin\Omega$,
then $F_t(y)=y=F_t(x)$, contradicting injectivity.
Hence $F_t(\Omega)\subset\Omega$.
Conversely, the preimage of any $y\in\Omega$
cannot lie outside $\Omega$, where $F_t$ is the
identity.
Therefore $F_t(\Omega)=\Omega$.
Since $F_t$ also fixes $\partial\Omega$ pointwise, $
(\mathrm{Id}+tu)(\overline\Omega)=\overline\Omega$. Since $\|\nabla u\|_{L^\infty(\Omega)}\le\theta<1$,
we have $
|(I+t\nabla u(x))\xi|
\ge (1-t\theta)|\xi|
\ge (1-\theta)|\xi|$ for every $t\in[0,1]$ and $\xi\in\mathbb R^n$. Thus $I+t\nabla u(x)$ is invertible for every
$t\in[0,1]$. Thus, 
$$
\det(I+t\nabla u)>0
\quad\text{and}\quad\int_\Omega\det(I+t\nabla u)\,dx=|\Omega|.
$$

\noindent
\textit{Step 3. }For every $u\in W_\beta$ satisfying
$\|u\|_{C^{1,\beta}(\overline\Omega)}<r$,
where $r>0$ is sufficiently small, define
$$
\mathcal H(u):=
\overline\rho(\mathrm{Id}+u)\det(I+\nabla u)
-a\int_0^1\det(I+t\nabla u)\,dt
-(\overline\rho-a).
$$
The change-of-variables formula gives $\int_\Omega\mathcal H(u)\,dx=0$. Its linearization at zero is
$$
D\mathcal H(0)[u]
=
\nabla\overline\rho\cdot u
+\left(\overline\rho-\frac a2\right)
\operatorname{div}u
=
\operatorname{div}(\sigma u).
$$
Define the nonlinear remainder $
\mathcal N(u):=
\mathcal H(u)-\operatorname{div}(\sigma u)$. Both terms belong to $C^{0,\beta}(\overline\Omega)$ and have zero mean, so $\mathcal N(u)\in Z_\beta$. We next estimate its $C^{0,\beta}$ norm. Throughout the following estimates, all H\"older norms are taken over $\overline\Omega$. Set $
h_s(x):=
\nabla\overline\rho(x+su(x))
-\nabla\overline\rho(x)$. Since $\nabla\overline\rho\in C^{0,\gamma}$, $\|h_s\|_{C^0}
\le
[\nabla\overline\rho]_{C^{0,\gamma}}
s^\gamma\|u\|_{C^0}^{\gamma}$.
Moreover, the maps $\mathrm{Id}+su$ have uniformly bounded Lipschitz constants. Hence
$$
\begin{aligned}
|h_s(x)-h_s(y)|
&\le
[\nabla\overline\rho]_{C^{0,\gamma}}
\bigl(
|x+su(x)-y-su(y)|^\gamma
+|x-y|^\gamma
\bigr)\\
&\le C|x-y|^\gamma,
\end{aligned}
$$
and therefore $\|h_s\|_{C^{0,\gamma}}\le C$.
The constants are independent of $s$ and of $u$
in the chosen neighborhood.
Since $0<\beta<\gamma$, H\"older interpolation gives
$$
\|h_s\|_{C^{0,\beta}}\le
C\|h_s\|_{C^0}^{1-\beta/\gamma}
\|h_s\|_{C^{0,\gamma}}^{\beta/\gamma}\le
C s^{\gamma-\beta}
\|u\|_{C^0}^{\gamma-\beta}.
$$
Taylor's formula in integral form yields
$\overline\rho(\mathrm{Id}+u)
-\overline\rho
-\nabla\overline\rho\cdot u
=
\int_0^1 h_s\cdot u\,ds$. Using the product estimate in $C^{0,\beta}$,
we obtain
$$
\begin{aligned}
\left\|
\overline\rho(\mathrm{Id}+u)
-\overline\rho
-\nabla\overline\rho\cdot u
\right\|_{C^{0,\beta}}&\le
C\|u\|_{C^{0,\beta}}
\int_0^1\|h_s\|_{C^{0,\beta}}\,ds\\
&\le
C\|u\|_{C^{0,\beta}}
\|u\|_{C^0}^{\gamma-\beta}
\int_0^1s^{\gamma-\beta}\,ds\\
&\le
C\|u\|_{C^{1,\beta}}^{1+\gamma-\beta}.
\end{aligned}
$$
Write $\delta:=\gamma-\beta\in(0,1)$ and set $
q:=\|u\|_{C^{1,\beta}}$.
We work in a fixed neighborhood of zero on which $q\le r_0\le1$.
All constants below are independent of $u$
in this neighborhood. For $t\in[0,1]$, define $
P_t(u):=
\det(I+t\nabla u)-1-t\operatorname{div}u$. Every term in $P_t(u)$ has degree at least two
in $\nabla u$. The product estimate in
$C^{0,\beta}$ therefore gives
$$
\sup_{t\in[0,1]}
\|P_t(u)\|_{C^{0,\beta}}
\le
C\sum_{k=2}^n\|\nabla u\|_{C^{0,\beta}}^k
\le Cq^2.
$$
Set $
e(u):=
\overline\rho(\mathrm{Id}+u)
-\overline\rho-\nabla\overline\rho\cdot u$.
The preceding Taylor estimate yields $
\|e(u)\|_{C^{0,\beta}}\le Cq^{1+\delta}$.
Consequently, $
\|\overline\rho(\mathrm{Id}+u)-\overline\rho\|_{C^{0,\beta}}
\le Cq$ and $
\|\overline\rho(\mathrm{Id}+u)\|_{C^{0,\beta}}
\le C$. Using $
\det(I+\nabla u)
=1+\operatorname{div}u+P_1(u)$
and
$$
\int_0^1\det(I+t\nabla u)\,dt
=
1+\frac12\operatorname{div}u
+\int_0^1P_t(u)\,dt,
$$
we obtain
$$
\begin{aligned}
\mathcal N(u)
={}&e(u)
+\bigl(
\overline\rho(\mathrm{Id}+u)-\overline\rho
\bigr)\operatorname{div}u\\
&+\overline\rho(\mathrm{Id}+u)P_1(u)
-a\int_0^1P_t(u)\,dt.
\end{aligned}
$$
Hence
$$
\|\mathcal N(u)\|_{C^{0,\beta}}
\le C\bigl(q^{1+\delta}+q^2\bigr)
\le Cq^{1+\delta},
$$
because $q\le1$ and $1+\delta<2$.
Thus
$$
\|\mathcal N(u)\|_{C^{0,\beta}}
\le
C\|u\|_{C^{1,\beta}}^{1+\gamma-\beta}.
$$

\noindent
\textit{Step 4.} Now write
$
p:=\rho_1-\overline\rho$ and $h:=\rho_0-\rho_1+a$. The mass condition gives $h\in Z_\beta$, $
\|h\|_{C^{0,\beta}}\le d$ and $
\|p\|_{C^{0,\beta}}\le d$. Define
$$
\mathcal Q_p(u):=
p(\mathrm{Id}+u)\det(I+\nabla u)-p.
$$
Again, changing variables gives
$\mathcal Q_p(u)\in Z_\beta$.
On a fixed sufficiently small ball in $W_\beta$,
the composition and product estimates imply $
\|\mathcal Q_p(u)\|_{C^{0,\beta}}
\le C\|p\|_{C^{0,\beta}}$. This estimate requires no derivatives of $p$. Consider
\[
\mathcal T(u):=
\mathcal B_\sigma
\bigl(h-\mathcal N(u)-\mathcal Q_p(u)\bigr).
\]
The preceding estimates give a constant $C_0>0$,
independent of $\rho_0,\rho_1$, such that $
\|\mathcal T(u)\|_{C^{1,\beta}}
\le
C_0\left(
d+\|u\|_{C^{1,\beta}}^{1+\gamma-\beta}
\right)$ on that ball. Choose $r>0$ sufficiently small that $r\le\theta$,
the preceding estimates hold for
$\|u\|_{C^{1,\beta}}\le r$, and
$
C_0r^{\gamma-\beta}\le\frac12$.
Then choose $\varepsilon>0$ such that
$C_0\varepsilon\le r/2$.
It follows that $\mathcal T$ maps
$$
\mathcal K_r:=
\left\{
u\in W_\beta:
\|u\|_{C^{1,\beta}}\le r
\right\}$$
into itself. We apply the Schauder fixed point theorem in $W_\eta$.
The set $K_r$nis nonempty and convex. The compact embedding
$C^{1,\beta}(\overline\Omega)
\hookrightarrow C^{1,\eta}(\overline\Omega)$
makes it relatively compact in $W_\eta$. It is also closed in $W_\eta$:
under convergence in $C^{1,\eta}$, the zero
boundary values are preserved, and the
$C^{1,\beta}$ norm bound passes to the limit.
Hence $K_r$ is compact in $W_\eta$.
To verify continuity, let $u_j,u\in\mathcal K_r$
satisfy $u_j\to u$ in $C^{1,\eta}$.
Then $
\mathcal N(u_j)\to\mathcal N(u)$ in $C^{0,\eta}$. Also, $p(\mathrm{Id}+u_j)$ converges uniformly to
$p(\mathrm{Id}+u)$, and these compositions have
uniformly bounded $C^{0,\beta}$ norms.
Since $\eta<\beta$, interpolation gives $
p(\mathrm{Id}+u_j)\to p(\mathrm{Id}+u)$ in $C^{0,\eta}$. The determinants converge in $C^{0,\eta}$ as well.
Consequently, $
\mathcal Q_p(u_j)\to\mathcal Q_p(u)$ in $C^{0,\eta}$. The boundedness of $\mathcal B_\sigma$ at exponent
$\eta$ proves that $\mathcal T$ is continuous on
$\mathcal K_r$ in the $W_\eta$ topology. Schauder's theorem therefore gives
$u\in\mathcal K_r$ satisfying $\mathcal T(u)=u$.
At this fixed point,
$$
\begin{aligned}
\|u\|_{C^{1,\beta}}
&\le
C_0d+
C_0\|u\|_{C^{1,\beta}}^{1+\gamma-\beta}\\
&\le
C_0d+\frac12\|u\|_{C^{1,\beta}},
\end{aligned}
$$
and hence $
\|u\|_{C^{1,\beta}}\le2C_0d$. Set $M:=\mathrm{Id}+u$.
Applying $\operatorname{div}(\sigma\,\cdot)$
to the fixed point equation gives
$
\operatorname{div}(\sigma u)
=
h-\mathcal N(u)-\mathcal Q_p(u)$.
Thus $
\mathcal H(u)+\mathcal Q_p(u)=h$,
which is equivalent to
$$
\rho_1(M)\det\nabla M
=
\rho_0+a\int_0^1J_t[M]\,dt
=
R_1[M,a].
$$

\noindent
\textit{Step 5.} Since $\|\nabla u\|_{L^\infty}\le r\le\theta<1$
and $\Omega$ is convex, we have
\[
|u(x)-u(y)|\le\theta|x-y|
\qquad
\text{for all }x,y\in\overline\Omega.
\]
Consequently, for every $t\in[0,1]$,
\[
\begin{aligned}
|X_t[M](x)-X_t[M](y)|
&\ge (1-t\theta)|x-y|
\ge (1-\theta)|x-y|,\\
|X_t[M](x)-X_t[M](y)|
&\le (1+t\theta)|x-y|
\le (1+\theta)|x-y|.
\end{aligned}
\]
By the choice of $\theta$, these estimates imply
the prescribed bi-Lipschitz bounds with constant $L$.
Moreover, $u=0$ on $\partial\Omega$, so the
zero-extension argument above gives
\[
X_t[M](\overline\Omega)=\overline\Omega
\qquad\text{for every }t\in[0,1].
\]
Thus the required flow geometry holds. It remains to verify the nonnegativity of the
intermediate states.
For each fixed $x\in\Omega$,
\[
\partial_tR_t[M,a](x)=aJ_t[M](x).
\]
Since $J_t[M](x)>0$, the function
$t\mapsto R_t[M,a](x)$ is nondecreasing if $a\ge0$
and nonincreasing if $a\le0$.
Its endpoint values satisfy
\[
R_0[M,a]=\rho_0\ge0,
\qquad
R_1[M,a]=\rho_1(M)\det\nabla M\ge0.
\]
Therefore
\[
R_t[M,a](x)
\ge
\min\{\rho_0(x),\rho_1(M(x))\det\nabla M(x)\}
\ge0
\]
for every $t\in[0,1]$.
Hence $(M,a)\in\mathcal A_L^{\rho_1}$. Finally,
\[
\frac{|\nabla M|^n}{\det\nabla M}
\le
n^{n/2}
\left(\frac{1+\theta}{1-\theta}\right)^n<K,
\qquad
\int_\Omega\det\nabla M\,dx=|\Omega|.
\]
Hence $(M,a)\in\mathcal V_K^{\rho_1}$.
The asserted estimate follows from
$\|u\|_{C^{1,\beta}}\le2C_0d$.
\end{proof}

\begin{remark}
The regularity assumption
$\overline\rho\in C^{1,\gamma}(\overline\Omega)$
concerns only the fixed reference density.
Both prescribed endpoint densities may be merely
$C^{0,\beta}$.
The proposition provides a local sufficient condition
and does not assert feasibility for arbitrary H\"older densities.
\end{remark}

\begin{proof}[Proof of Theorem \ref{cor:local-existence-convergence}]
Choose $\varepsilon$ as in
Proposition~\ref{prop:local-feasibility}.
If $d<\varepsilon$, that proposition provides
a finite-energy pair $
(M_*,a)\in
\mathcal V_K^{\rho_1}\cap\mathcal A_L^{\rho_1}$. Existence and static-dynamic equivalence follow
from Theorems~\ref{GTH5.2} and~\ref{TH1.4}.
The mass constraint gives
\[
\|f\|_{L^2(0,1)}^2
=
a^2+\|f-a\|_{L^2(0,1)}^2
\]
for every $(M,f)\in\mathcal A_L^{\rho_1}$.
The constant-source competitor therefore implies
$$
b_L=a^2,
\qquad
\mathcal S_L^{\rho_1}
=
\left\{(M,a)\in\mathcal A_L^{\rho_1}\right\},
\qquad
g_L=m_a.
$$
The convergence and minimum-value expansion
now follow from
Theorem~\ref{thm:minimum-source-limit}.
\end{proof}

\subsection{Examples of nonemptiness}
We give explicit examples verifying the feasibility
assumptions used in the existence and convergence results.
The first two constructions establish nonemptiness
of $\mathcal V_K^{\rho_1}$ and
$\mathcal V_{K,\Lambda}^{\rho_1}$.
For the reconstruction class, we exhibit endpoint
densities admitting both constant and nonconstant sources,
followed by an example for which admissible sources
exist but no constant source is feasible.
These examples distinguish the constant-source limit
from the general minimum-source-cost limit.

\begin{example}[Nonemptiness of $\mathcal V_K^{\rho_1}$]
Let $\Omega=(0,1)^n$ and suppose that
$$
\rho_0(x)=r(x_1),
\qquad
\rho_1(y)=q(y_1),
$$
where $r\in L^\infty(0,1)$ is nonnegative and
$q\in C([0,1])$ is strictly positive.
Write
$$
r_*:=\operatorname*{ess\,inf}_{(0,1)}r,
\qquad
r^*:=\|r\|_{L^\infty(0,1)},
$$
and
$$
q_*:=\min_{[0,1]}q,
\qquad
q^*:=\max_{[0,1]}q.
$$
Set
$$
a:=\int_0^1q(s)\,ds-\int_0^1r(s)\,ds
$$
and assume that
\begin{equation}\label{e6.1}
r_*+\frac a2>0,
\qquad
q_*-\frac a2>0.
\end{equation}
Define
$$
Q_a(y):=\int_0^y\left(q(\xi)-\frac a2\right)d\xi
$$
and
$$
F(s):=
Q_a^{-1}\left(
\int_0^s\left(r(\xi)+\frac a2\right)d\xi
\right).
$$
The assumptions \eqref{e6.1} ensure that both cumulative
functions are strictly increasing.
Furthermore,
$$
\int_0^1\left(r+\frac a2\right)ds
=
\int_0^1\left(q-\frac a2\right)ds.
$$
Thus $F$ is a Lipschitz strictly increasing map
of $[0,1]$ onto itself. For almost every $s\in(0,1)$, its derivative is
$$
g(s):=F'(s)
=
\frac{r(s)+a/2}{q(F(s))-a/2}.
$$
In particular,
$$
0<\ell:=
\frac{r_*+a/2}{q^*-a/2}
\le g(s)\le
l:=
\frac{r^*+a/2}{q_*-a/2}
<\infty.
$$
Set
$$
M(x):=(F(x_1),x_2,\ldots,x_n),
\qquad
f(t):=a.
$$
Then $M$ maps $\Omega$ onto itself and belongs to
$W^{1,\infty}(\Omega;\mathbb R^n)$.
Moreover,
$$
\nabla M
=
\operatorname{diag}(g(x_1),1,\ldots,1),
\qquad
\det\nabla M=g(x_1),
$$
and
$$
\det\bigl(t\nabla M+(1-t)I\bigr)=1-t+t g(x_1)
\ge\min\{1,\ell\}>0.
$$
The definition of $g$ gives
$$
q(F(x_1))g(x_1)
=
r(x_1)+\frac a2\bigl(1+g(x_1)\bigr).
$$
Consequently,
$$
\begin{aligned}
R_1[M,f](x)
&=
r(x_1)+\int_0^1
a\bigl(1-s+s g(x_1)\bigr)\,ds\\
&=
q(F(x_1))g(x_1)\\
&=
\rho_1(M(x))\det\nabla M(x).
\end{aligned}
$$
For intermediate times,
$$
R_t[M,f](x)
=
r(x_1)+
a\left(t+\frac{t^2}{2}(g(x_1)-1)\right).
$$
Since $\det\bigl(t\nabla M+(1-t)I\bigr)>0$ and $a$ is constant,
this function is monotone in $t$.
Its endpoint values are $r(x_1)$ and
$q(F(x_1))g(x_1)$, both nonnegative.
Hence
$$
R_t[M,f](x)\ge0
\quad\text{for every }t\in[0,1].
$$
The Jacobian mass bound follows from
$$
\int_\Omega\det\nabla M\,dx
=
\int_0^1g(s)\,ds
=
F(1)-F(0)=1=|\Omega|.
$$
Also,
$$
|\nabla M|^n
=
\bigl(g(x_1)^2+n-1\bigr)^{n/2}
\le K_*\det\nabla M,
$$
where
$$
K_*:=\frac{(l^2+n-1)^{n/2}}{\ell}.
$$
Therefore,
$$
(M,f)\in\mathcal V_K^{\rho_1}
\qquad\text{for every }K\ge K_*.
$$
Its energy is finite because the map has bounded range.
\end{example}

\begin{example}[Nonemptiness of $\mathcal V_{K,\Lambda}^{\rho_1}$]
\label{ex:whole-space-affine}
Let $\Omega\subset\mathbb R^n$ be a bounded connected
open set with Lipschitz boundary.
Fix $K\ge n^{n/2}$ and $\Lambda>0$, and choose $s>0$
such that
$$
s^n|\Omega|\le\Lambda.
$$
For any $b\in\mathbb R^n$, set
$$
M(x):=b+sx,
\qquad
j_s(t):=(1-t+ts)^n,
\qquad
c_s:=\int_0^1j_s(t)\,dt>0.
$$
Choose $c>0$ and $q\ge0$, and take
$\psi\in C_c^\infty(\mathbb R^n)$ satisfying
$$
0\le\psi\le1,
\qquad
\psi=1
\quad\text{on }b+s\overline\Omega.
$$
Define
$$
\rho_0(x):=c,
\qquad
\rho_1(y):=\frac{c+qc_s}{s^n}\psi(y),
\qquad
f(t):=q.
$$
Then $\rho_1\in C_c^\infty(\mathbb R^n)$ is
nonnegative, and
$$
\int_\Omega\rho_0\,dx=c|\Omega|>0.
$$
Since $\nabla M=sI$, we have
$$
\det\nabla M=s^n,
\qquad
J_t[M]=j_s(t)>0,
$$
and
$$
R_t[M,q]
=c+q\int_0^t j_s(r)\,dr
\ge c>0.
$$
As $\psi(M(x))=1$, the endpoint constraint holds
pointwise:
$$
\rho_1(M(x))\det\nabla M(x)
=c+qc_s
=R_1[M,q](x).
$$
Moreover,
$$
|\nabla M|^n
=n^{n/2}s^n
\le K\det\nabla M,
$$
and
$$
\int_\Omega\det\nabla M\,dx
=s^n|\Omega|\le\Lambda.
$$
Therefore,
$$
(M,q)\in\mathcal V_{K,\Lambda}^{\rho_1}.
$$
This pair has finite energy, since
$$
E_\alpha(M,q)=
\left(c+q\int_0^1(1-t)j_s(t)\,dt\right)
\int_\Omega|b+(s-1)x|^2\,dx
+\frac{q^2}{\alpha}<\infty.
$$
Taking $q>0$ gives a positive constant-source pair,
whereas $q=0$ gives a finite-cost zero-source pair.
The translation $b$ is unrestricted, so $M(\Omega)$
need not lie in $\overline\Omega$.
\end{example}

\begin{example}[Constant and nonconstant sources for the same
endpoint densities]
\label{ex:constant-and-nonconstant-sources}
Let $\Omega=(0,1)^n$, fix $L>1$ and $0<a<r$, and choose
$$
0<\varepsilon<
\min\left\{
1-L^{-1},
\frac{r-a}{r-a/2}
\right\}.
$$
Set
$$
\rho_1\equiv r,
\qquad
\rho_0(x)
=
r-a+\varepsilon\left(r-\frac a2\right)
\cos(2\pi x_1),
$$
and define
$$
M_*(x)=
\left(
x_1+\frac{\varepsilon}{2\pi}\sin(2\pi x_1),
x_2,\ldots,x_n
\right).
$$
Then $\rho_0$ is strictly positive and
$$
\int_\Omega(\rho_1-\rho_0)\,dx=a.
$$
Each interpolation map changes only the first coordinate,
whose derivative is $
1+t\varepsilon\cos(2\pi x_1)$.
This coordinate map fixes $0$ and $1$ and is strictly
increasing. Hence $X_t[M_*]$ maps $\overline\Omega$
onto itself. Moreover,
$$
L^{-1}<1-\varepsilon
\le 1+t\varepsilon\cos(2\pi x_1)
\le 1+\varepsilon<L,
$$
so the prescribed bi-Lipschitz bounds hold.
The interpolation Jacobian is
$$
J_t[M_*](x)=1+t\varepsilon\cos(2\pi x_1).
$$

For the constant source $a$, we have
$$
R_t[M_*,a](x)
=
\rho_0(x)
+
a\left(
t+\frac{\varepsilon t^2}{2}\cos(2\pi x_1)
\right).
$$
Since $a>0$ and $J_t[M_*]>0$,
$$
R_t[M_*,a]\ge\rho_0>0.
$$
At $t=1$,
$$
R_1[M_*,a]
=
r\bigl(1+\varepsilon\cos(2\pi x_1)\bigr)
=
\rho_1(M_*)\det\nabla M_*.
$$
Thus $(M_*,a)\in\mathcal A_L^{\rho_1}$.

Now consider the nonconstant source $f(t):=6at(1-t)$.
It is nonnegative and satisfies
$$
\int_0^1f(t)\,dt=a,
\qquad
\int_0^1tf(t)\,dt=\frac a2.
$$
Consequently,
$$
R_t[M_*,f]
=
\rho_0+\int_0^t
f(s)\bigl(1+s\varepsilon\cos(2\pi x_1)\bigr)\,ds
\ge\rho_0>0,
$$
and the two moment identities give
$$
\begin{aligned}
R_1[M_*,f]
&=\rho_0+a+\frac{a\varepsilon}{2}\cos(2\pi x_1)\\
&=r\bigl(1+\varepsilon\cos(2\pi x_1)\bigr)\\
&=\rho_1(M_*)\det\nabla M_*.
\end{aligned}
$$
Therefore $(M_*,f)\in\mathcal A_L^{\rho_1}$ as well.
Both pairs have finite energy. The constant-source problem also has $m_a>0$.
Indeed, let $c_0:=\min_{\overline\Omega}\rho_0>0$.
For every $(M,a)\in\mathcal A_L^{\rho_1}$,
the positivity of $a$ and $J_t[M]$ gives
$$
G(M,a)\ge c_0\int_\Omega|M(x)-x|^2\,dx.
$$
If $m_a=0$, an attained minimizer would satisfy
$M=\mathrm{Id}$ a.e.
Its endpoint constraint would then imply
$\rho_1=\rho_0+a$, contrary to the definition of $\rho_0$.
Thus a constant source is feasible, but nonzero
spatial transport is still necessary.
\end{example}

\begin{example}[A nonempty reconstruction class without
a constant source]
\label{ex:no-constant-source}
Let $\Omega=(0,1)^n$, $L=2$, and set
$$
\rho_1\equiv1,
\qquad
\rho_0(x)=2+2\cos(2\pi x_1).
$$
Then $\rho_0\ge0$ and the prescribed mean source is
$$
a=
\frac{\int_\Omega\rho_1\,dx-\int_\Omega\rho_0\,dx}
{|\Omega|}
=-1.
$$
Define
$$
M_*(x)=
\left(
x_1+\frac{1}{4\pi}\sin(2\pi x_1),
x_2,\ldots,x_n
\right),
\qquad
f_*(t)=14-30t.
$$
The first coordinate of $X_t[M_*]$ fixes $0$ and $1$
and has derivative
$$
1+\frac t2\cos(2\pi x_1)
\in\left[\frac12,\frac32\right].
$$
The remaining coordinates are unchanged.
Thus $X_t[M_*]$ maps $\overline\Omega$ onto itself
and satisfies the prescribed bi-Lipschitz bounds
with $L=2$.
Moreover,
$$
J_t[M_*](x)=1+\frac t2\cos(2\pi x_1).
$$
The source satisfies
$$
\int_0^1f_*(t)\,dt=-1,
\qquad
\int_0^1tf_*(t)\,dt=-3.
$$
Hence
$$
\begin{aligned}
R_1[M_*,f_*]
&=\rho_0-1-\frac32\cos(2\pi x_1)\\
&=1+\frac12\cos(2\pi x_1)\\
&=\rho_1(M_*)\det\nabla M_*.
\end{aligned}
$$
Since $J_t[M_*]>0$ and $f_*$ changes sign once,
from positive to negative,
$R_t[M_*,f_*](x)$ first increases and then decreases
as a function of $t$. Therefore,
$$
R_t[M_*,f_*](x)
\ge
\min\{\rho_0(x),R_1[M_*,f_*](x)\}
\ge0.
$$
It follows that
$$
(M_*,f_*)\in\mathcal A_2^{\rho_1},
$$
and this pair has finite energy. On the other hand, if $(M,-1)\in\mathcal A_2^{\rho_1}$,
the endpoint constraint and the Jacobian lower bound
would give
$$
\rho_0
=
J_1[M]+\int_0^1J_t[M]\,dt
\ge 2^{1-n}
\quad\text{a.e. in }\Omega.
$$
This contradicts
$$
\operatorname*{ess\,inf}_{\Omega}\rho_0=0.
$$
Thus $\mathcal A_2^{\rho_1}$ is nonempty but contains
no constant-source admissible pair. Finally, Proposition~\ref{P5.2}, with $c_0=0$,
$c_1=1$, and $a=-1$, yields $
b_2\ge1+2^{2-4n}$. The constructed competitor gives
$$
b_2\le\|f_*\|_{L^2(0,1)}^2
=\int_0^1(14-30t)^2\,dt=76.
$$
Consequently,
$$
1+2^{2-4n}\le b_2\le76.
$$
This example shows that admissibility may require
temporal variation of the source even when its mean
is fixed by the endpoint masses.
\end{example}

\vspace{10mm}

\noindent {\bf Acknowledgements.}
The first author is grateful to Prof. Xiao-Ping Yang for insightful discussions. Both authors also wish to express their sincere gratitude to him for his long-term encouragement and guidance in this research direction.
This work has been supported by the National Natural Science Foundation of China (Grant No. 12271093), the Jiangsu Provincial Scientific Research Center of Applied Mathematics (Grant No. BK20233002), and Shanghai Institute for Mathematics and Interdisciplinary Sciences (SIMIS) under grant number SIMIS-ID-2025-AD.

\bibliographystyle{amsplain}

\end{document}